\DeclareSymbolFont{SY}{U}{psy}{m}{n}
\DeclareMathSymbol{\emptyset}{\mathord}{SY}{'306}
\renewcommand{\eqref}[1]{{\rm(\ref{#1})}}
\newcommand{\bbC}{{\mathbb C}}
\newcommand{\bbR}{{\mathbb R}}
\newcommand{\bbZ}{{\mathbb Z}}
\newcommand{\bbN}{{\mathbb N}}
\newcommand{\cB}{{\mathcal B}}
\newcommand{\cG}{{\mathcal G}}
\newcommand{\conv}{\mathop{\rm conv}}
\newcommand{\cP}{{\mathcal P}}
\newcommand{\cT}{{\mathcal T}}
\newcommand{\sE}{{\sf E}}
\newcommand{\fG}{\mathfrak{G}}
\newcommand{\fH}{\mathfrak{H}}
\newcommand{\fK}{\mathfrak{K}}
\newcommand{\fL}{\mathfrak{L}}
\newcommand{\fM}{\mathfrak{M}}
\newcommand{\fT}{\mathfrak{T}}
\newcommand{\diag}{\mathop{\rm diag}}
\newcommand{\dist}{\mathop{\rm dist}}
\newcommand{\Real}{\mathop{\rm Re}}
\newcommand{\be}{\begin{equation}}
\newcommand{\ee}{\end{equation}}
 \DeclareMathOperator{\spec}{spec}
\newcommand{\dom}{\mathop{\mathrm{Dom}}}
\newcommand{\Dom}{\mathop{\mathrm{Dom}}}
\newcommand{\Ker}{\mathop{\mathrm{Ker}}}
\newcommand{\ran}{\mathop{\mathrm{Ran}}}
\newcommand{\Ran}{\mathop{\mathrm{Ran}}}
\numberwithin{equation}{section}
\newtheorem{theorem}{Theorem}[section]
\newtheorem{corollary}[theorem]{Corollary}
\newtheorem{lemma}[theorem]{Lemma}
\newtheorem{hypothesis}[theorem]{Assumption}
\theoremstyle{definition}
\newtheorem{definition}[theorem]{Definition}
\newtheorem{remark}[theorem]{Remark}
\newtheorem{example}[theorem]{Example}
\DeclareMathOperator{\arctanh}{arctanh}
\newcommand{\efrac}[2]{\genfrac{}{}{0pt}{}{#1}{#2}}
\newcommand{\la}{\lambda}
\renewcommand{\Re}{{\rm Re\,}}
\renewcommand{\Im}{{\rm Im\,}}
\begin{document}

\title[Bounds on the spectrum and reducing subspaces]
{Bounds on the spectrum and reducing subspaces \\ of a
$J$-self-adjoint operator}

\author[S. Albeverio, A. K. Motovilov, and C. Tretter]
{Sergio Albeverio,  Alexander K. Motovilov, and Christiane Tretter}

\address{Sergio Albeverio,
Institut f\"ur Angewandte Mathematik, Universit\"at Bonn, Endenicher
Allee 60, D-53115 Bonn, Germany; SFB 611 and HCM, Bonn; BiBoS,
Bielefeld-Bonn; CERFIM, Lo\-car\-no; Accademia di Architettura, USI,
Mendrisio} \email{albeverio@uni-bonn.de}

\address{Alexander K. Motovilov, Bogoliubov Laboratory of
Theoretical Physics, JINR, Joliot-Cu\-rie 6, 141980 Dubna, Moscow
Region, Russia} \email{motovilv@theor.jinr.ru}

\address{Christiane Tretter, Mathematisches Institut,
Universit\"at Bern, Sidlerstrasse 5, CH-3012 Bern, Switzerland}
\email{tretter@math.unibe.ch}

\date{September 7, 2009}

\subjclass[2000]{Primary 47A56, 47A62; Secondary 47B15, 47B49}

\keywords{Subspace perturbation problem, Krein space,
$J$-self-adjoint operator, $PT$ symmetry, $PT$-symmetric operator,
operator Riccati equation, operator angle, Davis-Kahan theorems}

\begin{abstract}
Given a self-adjoint involution $J$ on a Hilbert space $\fH$, we
consider a $J$-self-adjoint operator $L=A+V$ on $\fH$ where $A$ is a
possibly unbounded self-adjoint operator commuting with $J$ and $V$
a bounded $J$-self-adjoint operator anti-commuting with $J$. We
establish optimal estimates on the position of the spectrum of $L$
with respect to the spectrum of $A$ and we obtain norm bounds on the
operator angles between maximal uniformly definite reducing
subspaces of the unperturbed operator $A$ and the perturbed operator
$L$. All the bounds are given in terms of the norm of $V$ and the
distances between pairs of disjoint spectral sets associated with
the operator $L$ and/or the operator $A$. As an example, the quantum
harmonic oscillator under a $\cP\cT$-symmetric perturbation is
discussed. The sharp norm bounds obtained for the operator angles
generalize the celebrated Davis-Kahan trigonometric theorems to the
case of $J$-self-adjoint perturbations.
\end{abstract}

\maketitle

\section{Introduction}
\label{SIntro}
Let $\fH$ be a Hilbert space and $J$ a self-adjoint involution on
$\fH$, that is, $J^*=J$ and $J^2=I$, where $J\neq I$ and $I$ denotes
the identity operator. A linear operator $L$ on $\fH$ is called
$J$-self-adjoint if the product $JL$ is a self-adjoint operator on
$\fH$, that is, $(JL)^*=JL$.

In this paper we consider a $J$-self-adjoint operator $L$ of the
form $L=A+V$ where $A$ is a (possibly unbounded) self-adjoint
operator on $\fH$ commuting with $J$ and $V$ a bounded
$J$-self-adjoint operator anti-commuting with $J$. Since the
involution $J$ is both unitary and self-adjoint, its spectrum
consists of the two points $+1$ and $-1$ and hence
\begin{equation}
\label{JPP} J=\sE_J(\{+1\})-\sE_J(\{-1\}),
\end{equation}
where $\sE_J(\{\pm1\})$ denote the corresponding spectral
projections of $J$. Thus, the involution $J$ induces a natural
decomposition of the Hilbert space $\fH$ into the sum
\begin{equation}
\label{Hsum} \fH=\fH_0\oplus\fH_1
\end{equation}
of the complementary orthogonal subspaces
\begin{equation}
\label{H0H1} \fH_0=\Ran \sE_J(\{+1\}), \quad \fH_1=\Ran
\sE_J(\{-1\}).
\end{equation}
Our assumptions on the operators $A$ and $V$ imply that they are
nothing but the diagonal and off-diagonal parts of $L$ with respect
to the decomposition \eqref{Hsum}:
\begin{alignat}{2}
\label{Adiag} A & = \left(\begin{array}{cc} A_0 & 0\\ 0 &
A_1\end{array}\right),\quad & \dom(A)&=\dom(A_0)\oplus\dom(A_1)
\subset \fH_0\oplus\fH_1, \\
\label{Voff} V & =\left(\begin{array}{cc} 0 & B\\ -B^* &
0\end{array}\right), \quad & \dom(V)&=\fH_0\oplus\fH_1;
\end{alignat}
here the entries $A_0=A\bigr|_{\fH_0}$ and $A_1=A\bigr|_{\fH_1}$ are
self-adjoint operators on $\fH_0$ and $\fH_1$, respectively, and
$B=V\bigl|_{\fH_1}\in\cB(\fH_1,\fH_0)$ is bounded. Thus the operator
$L$ may be viewed as an off-diagonal bounded $J$-self-adjoint
perturbation of the block diagonal self-adjoint operator matrix $A$.

A powerful tool to study operators $L$ admitting a block operator
matrix representation with respect to a self-adjoint involution $J$
is furnished by indefinite inner product spaces. This idea was first
used in \cite{LT2001} to prove a general theorem on
block-diagonalizability for a $J$-accretive operator $A$ and a
self-adjoint perturbation $V$, with application to Dirac operators.
The main ingredient of this approach is to show that the perturbed
reducing subspaces are maximal uniformly positive and negative,
respectively, with respect to the indefinite inner product. As a
consequence, they admit graph representations by angular
operators which measure the deviation between the unperturbed and
the perturbed reducing subspaces.

In the situation considered in the present paper, the self-adjoint
involution $J$ induces an indefinite inner product by means of the
formula
\begin{equation}
\label{IpKs} [x,y]=(Jx,y), \quad x,y\in\fH.
\end{equation}
The Hilbert space $\fH$ equipped with the indefinite inner product
\eqref{IpKs} is a Krein space which we denote by $\fK$, assuming
that $\fK$ stands for the pair $\{\fH,J\}$. Note that every
$J$-self-adjoint operator on $\fH$ is a self-adjoint operator on the
Krein space $\fK$; in particular, the operators $A$, $V$, and
$L=A+V$ are self-adjoint operators on $\fK$. In the Krein space
$\fK$, a (closed) subspace $\fL\subset\fK$ is said to be {uniformly
positive} if there exists a $\gamma>0$ such that
\begin{equation}
\label{mupsIn} [x,x]\geq \gamma\;\|x\|^2 \text{\, \,for every \, }
x\in\fK, \ x\ne 0,
\end{equation}
where $\|\cdot\|$ denotes the norm on $\fH$. The subspace $\fL$ is
called {maximal uniformly positive} if it is not a proper subset of
another uniformly positive subspace of $\fK$. Uniformly negative and
max\-imal uniformly negative subspaces of $\fK$ are defined in a
similar way, replacing the inequality in \eqref{mupsIn} by
$[x,x]\leq -\gamma\;\|x\|^2\!$. Direct sums of subspaces of $\fK$
(or $\fH$) that are $J$-orthogonal (i.e.\ or\-tho\-gonal with
respect to the inner product $[\cdot,\cdot]$) are denoted with
``$[+]$''\!. Further definitions re\-lated to Krein spaces and
linear operators therein may be found, e.g., in \cite{Langer82},
\cite{Bognar}, \cite{IKL}, or~\cite{AI}.

The subspaces $\fH_0$ and $\fH_1$, which simultaneously reduce $A$
and $J$, are maximal uniformly positive and maximal uniformly
negative, respectively, with respect to the inner product
\eqref{IpKs} induced by $J$. Throughout this paper, we assume that
also the perturbed operator $L=A+V$ possesses a maximal uniformly
positive invariant subspace~$\fH_0'$. Then the complementary
$J$-orthogonal subspace $\fH_1'={\fH_0'}^{[\perp]}$ is invariant for $L$
as well; hence both $\fH_0'$ and $\fH_1'$ are automatically reducing
subspaces for $L$ and the spectrum of $L$ is purely real (see, e.g.,
Corollary \ref{LCaux} below).

The main goal of this paper is to establish bounds on the position
of the reducing subspaces $\fH_0'$ or $\fH_1'$ of the perturbed
operator $L=A+V$ relative to the subspaces $\fH_0$ or $\fH_1$. The
bounds are given in terms of the norm of the perturbation $V$ and of
the distances between the unperturbed spectra
\begin{equation}
\label{sigs} \sigma_0=\spec\bigl(A\bigr|_{\fH_0}\bigr)\text{\, and
\,} \sigma_1=\spec\bigl(A\bigr|_{\fH_1}\bigr)
\end{equation}
of $A$ and/or the perturbed spectra
\begin{equation}
\label{sig'} \sigma'_0=\spec\bigl(L\bigr|_{\fH'_0}\bigr)\text{\, and
\,} \sigma'_1=\spec\bigl(L\bigr|_{\fH'_1}\bigr)
\end{equation}
of $L$ in their respective maximal uniformly definite reducing
subspaces.

We describe the mutual geometry of the maximal uniformly definite
reducing subspaces of the unperturbed and perturbed operators $A$
and $L=A+V$ by using the concept of operator angles between two
subspaces of a Hilbert space (for a discussion of this concept and
references see, e.g., \cite{KMM2}). Recall that the operator angle
$\Theta(\fH_i,\fH_i')$ between $\fH_i$ and $\fH_i'$ measured
relative to~$\fH_i$  is given by (see, e.g., \cite{KMM3})
\begin{equation}
\label{ThetaDef}
\Theta_i=\Theta(\fH_i,\fH_i')=\arcsin\sqrt{I_{\fH_i}-P_{\fH_i}
P_{\fH_i'}\bigl|_{\fH_i}}, \quad i=0,1,
\end{equation}
where $I_{\fH_i}$ denotes the identity operator on $\fH_i$, and
$P_{\fH_i}$ and $P_{\fH_i'}$ stand for the orthogonal projections in
$\fH$ onto $\fH_i$ and $\fH_i'$, respectively. By definition, the
operator angle $\Theta(\fH_i,\fH_i')$ is a non-negative operator on
$\fH_i$ and
$$\|\Theta(\fH_i,\fH_i')\|=\max\spec\bigl(\Theta(\fH_i,\fH_i')\bigr)\leq{\pi}/{2}.$$

The main tool we use to estimate the operator angles $\Theta_i=
\Theta(\fH_i,\fH'_i)$ is their relation to solutions of the operator
Riccati equation
\begin{equation}
\label{RicABB} KA_0-A_1K+KBK=-B^*,
\end{equation}
where the coefficients $A_0$, $A_1$, and $B$ are the entries of the
block matrix representations \eqref{Adiag} and \eqref{Voff} of the
operators $A$ and $V$. In fact, given a maximal uniformly positive
reducing subspace $\fH'_0$ of $L=A+V$, there exists a unique
uniformly contractive solution $K$\, ($\|K\|<1$) to the Riccati
equation \eqref{RicABB} such that $\fH'_0$ is the graph of $K$; the
maximal uniformly negative reducing subspace $\fH'_1$ of $L$, which
is $J$-orthogonal to $\fH'_0$, is the graph of the adjoint of $K$.
Since $\|K\|<1$ and $|K|= \tan \Theta(\fH_i,\fH_i')$ (see
Re\-mark~\ref{Rangular} and Lem\-ma~\ref{Lmaxu} below), the operator
angle always satisfies the two-sided inequality
\begin{equation}
\label{Thetapi4} 0\leq\Theta_i<{\pi}/{4},\quad i=0,1.
\end{equation}
By establishing tighter norm bounds on the uniformly contractive
solution $K$ of \eqref{RicABB}, we thus obtain tighter norm bounds
for the operator angles \eqref{ThetaDef}.

Sufficient conditions guaranteeing the existence of maximal
uniformly definite reducing subspaces for the operator $L=A+V$ may
be found, e.g., in \cite{AlMoSh} and \cite{Veselic1,Veselic2}. In
particular, one of the main results of \cite{AlMoSh} is as follows.
Here and in the sequel, by $\conv(\sigma)$ we denote the convex hull
of a Borel set $\sigma\subset\bbR$.

\begin{theorem}[\cite{AlMoSh}, The\-o\-rem~5.8\,(ii)]
\label{IEOT} Assume that the spectral sets $\sigma_0$ and $\sigma_1$
are disjoint, i.e.
\begin{equation}
\label{d} d:=\dist(\sigma_0,\sigma_1)>0,
\end{equation}
that one of these sets lies in a
finite or infinite gap of the other one, i.e.
\[
\conv(\sigma_i)\cap\sigma_{1-i}=\emptyset
\quad \text{or} \quad
\sigma_i\cap\conv(\sigma_{1-i})=\emptyset \quad \text{for some }\ i=0,1,
\]
and that $\,\|V\|<d/2$. Then
$$
\spec(L)=\sigma_0'\,\dot\cup\,\sigma_1',
$$
where the (disjoint) sets $\sigma'_0\subset\bbR$ and
$\sigma'_1\subset\bbR$ lie in the closed
${\|V\|}/{2}$-neighbourhoods of the sets $\sigma_0$ and $\sigma_1$,
respectively. The complementary spectral subspaces $\fH'_0$ and
$\fH'_1$ of $\,L$ associated with the spectral sets $\sigma'_0$ and
$\sigma'_1$ are maximal uniformly positive and maximal uniformly
negative, respectively, and satisfy the sharp norm bound
\begin{equation}
\label{Sin2Tin}
\tan\Theta_i\leq\tanh\left(\frac{1}{2}\arctanh\frac{2\|V\|}{d}\right),
\quad i=0,1,
\end{equation}
or, equivalently,
\begin{equation}
\label{Sin2T1}
\|\sin2\Theta_i\|\leq\frac{2\|V\|}{d}, \quad i=0,1.
\end{equation}
\end{theorem}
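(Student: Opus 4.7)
\noindent\emph{Proof plan.} The plan is to reduce the statement to an existence-and-norm result for the operator Riccati equation \eqref{RicABB}. Once a bounded solution $K$ with $\|K\|<1$ is constructed, the graph $\fH_0':=\{x\oplus Kx:x\in\fH_0\}$ is automatically a maximal uniformly positive subspace invariant for $L$, and its $J$-orthogonal complement $\fH_1'={\fH_0'}^{[\perp]}$ is the graph of $-K^*$ over $\fH_1$, hence maximal uniformly negative. Since $|K|=\tan\Theta_0$ and $|K^*|=\tan\Theta_1$ by Lemma~\ref{Lmaxu}, \eqref{Sin2Tin} will follow from any norm bound of the form $\|K\|\le\tanh\!\bigl(\tfrac12\arctanh(2\|V\|/d)\bigr)$, and \eqref{Sin2T1} is then just the scalar identity $\sin 2\alpha=2\tan\alpha/(1+\tan^2\alpha)$ applied pointwise in the spectral calculus of $\Theta_i$.

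To construct $K$, I would rewrite \eqref{RicABB} as the fixed-point equation $K=\cT^{-1}(B^*+KBK)$, where $\cT$ is the Sylvester operator $\cT K:=A_1 K-KA_0$. The crucial analytic input is the sharp bound $\|\cT^{-1}Y\|\le\|Y\|/d$, valid precisely under the \emph{gap} hypothesis that $\conv(\sigma_i)\cap\sigma_{1-i}=\emptyset$ for some $i$; in the merely-separated case only the weaker $(\pi/2)/d$ constant is available, which would lose the factor $2$ in \eqref{Sin2T1}. The gap bound is obtained by writing $\cT^{-1}Y$ as a contour integral of resolvents along a vertical line that separates $\sigma_0$ from $\sigma_1$ and estimating the integrand in $L^1$ along that line. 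Substituting into the fixed-point map and using $\|B\|=\|V\|$, any solution obeys the scalar inequality $\|V\|\,t^2-d\,t+\|V\|\ge 0$ with $t=\|K\|$; the condition $\|V\|<d/2$ makes the discriminant positive, and its smaller root is $t_\ast:=\bigl(d-\sqrt{d^2-4\|V\|^2}\bigr)/(2\|V\|)=\tanh\!\bigl(\tfrac12\arctanh(2\|V\|/d)\bigr)$. A Banach contraction argument on the closed ball $\{K:\|K\|\le t_\ast\}$ then produces the required $K$ with $\|K\|\le t_\ast$.

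To identify the spectral parts $\sigma'_i$ and locate them, I would use that $\fH_0'$ carries the Hilbert inner product $[\,\cdot\,,\cdot\,]|_{\fH_0'}$ in which $L|_{\fH_0'}$ is self-adjoint, so $\sigma'_0\subset\bbR$. Pulling back via the isometry $x\mapsto x\oplus Kx$ from $\bigl(\fH_0,\|(I-K^*K)^{1/2}\cdot\|\bigr)$, the restriction becomes the genuinely self-adjoint operator $\widetilde A_0=(I-K^*K)^{1/2}(A_0+BK)(I-K^*K)^{-1/2}$ on $\fH_0$. A short calculation that substitutes \eqref{RicABB} into $\widetilde A_0-A_0$ produces a symmetric expression whose norm is bounded by $\|V\|/2$, so $\sigma'_0\subset[\sigma_0]_{\|V\|/2}$; the analogous argument on $\fH_1'$ gives $\sigma'_1\subset[\sigma_1]_{\|V\|/2}$. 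The strict inequality $\|V\|<d/2$ keeps these enlarged sets disjoint, yielding $\spec(L)=\sigma'_0\,\dot\cup\,\sigma'_1$.

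The main obstacle, in my view, is precisely the sharp Sylvester estimate $\|\cT^{-1}\|\le 1/d$ under the one-in-a-gap-of-the-other hypothesis: without it one loses both the factor $2$ in \eqref{Sin2T1} and the clean hyperbolic form of \eqref{Sin2Tin}. A secondary technical difficulty, present when $A$ is unbounded, is justifying that $(I-K^*K)^{\pm1/2}$ preserves $\dom(A_0)$ and that $\Ran K\subset\dom(A_1)$, both of which ultimately reduce to consequences of the Riccati equation itself once $K$ is known to be bounded and $B$ is bounded.
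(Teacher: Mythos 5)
Your route to the subspace bound \eqref{Sin2Tin} is essentially sound and is genuinely different from the paper's. The fixed-point map $K\mapsto\cT^{-1}(B^*+KBK)$, the constant-$1$ Sylvester bound under the gap hypothesis (this is exactly Theorem~\ref{TSylB}, due to Heinz and Bhatia--Davis--McIntosh), the resulting quadratic inequality $\|V\|\,t^2-d\,t+\|V\|\ge0$, and the identification of its smaller root with $\tanh\bigl(\tfrac12\arctanh(2\|V\|/d)\bigr)$ are all correct, and Lemmas~\ref{Lgraph}, \ref{Lmaxu}, \ref{Lspn} then deliver the maximal uniform definiteness and \eqref{Sin2Tin} (note only that $\fH_1'=\cG(K^*)$, not $\cG(-K^*)$). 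The present paper, by contrast, quotes the statement from \cite{AlMoSh} and \emph{re-derives} the bound a posteriori: it combines the semi-a posteriori estimate $\|K\|\le\|B\|/\delta_{Z_0,A_1}$ of Theorem~\ref{Tmain}\,(ii) with the sharp spectral enclosure $\sigma_i'\subset O_{r_V}(\sigma_i)$, $r_V=\|V\|\tan\bigl(\tfrac12\arcsin\tfrac{2\|V\|}{d}\bigr)$, of Theorem~\ref{c1} (see Theorem~\ref{Tbapr-gen} and Remarks~\ref{Rgimp}, \ref{Rlast1}). Your contraction argument is more self-contained, since it also \emph{produces} the invariant subspaces; the paper's route isolates two sharp ingredients that it can reuse under weaker hypotheses. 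One caveat: your sketch of the constant-$1$ Sylvester bound via an $L^1$ estimate of the resolvents along a single vertical separating line does not work (the integral degenerates when the line is not equidistant from the two spectra, and a single line is not a closed contour for a Riesz-type representation); just invoke Theorem~\ref{TSylB}.

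The genuine gap is the spectral localization. The ``short calculation'' giving $\|\widetilde A_0-A_0\|\le\|V\|/2$ does not exist, because the displacement of the spectrum can come arbitrarily close to $\|V\|$. Example~\ref{Ex2} of the paper, with $b$ close to $d/2$, is a counterexample satisfying all hypotheses of Theorem~\ref{IEOT}: there $\sigma_0=\{-d/2\}$, $\sigma_0'=\{-\sqrt{d^2/4-b^2}\}$, so $\dist(\sigma_0,\sigma_0')=d/2-\sqrt{d^2/4-b^2}$, which equals the sharp radius $r_V$ of \eqref{r-symm}, tends to $d/2=\lim\|V\|$ as $b\uparrow d/2$, and exceeds $\|V\|/2$ for all $b>2d/5$. (The ``$\|V\|/2$'' in the quoted statement is itself a slip for ``$\|V\|$''---compare the description in the introduction to Section~\ref{SecSpec}---so no proof of the $\|V\|/2$-enclosure can succeed; the optimal radius is $r_V<\|V\|$.) Independently of the constant, estimating $\widetilde A_0-A_0$ in operator norm is problematic when $A_0$ is unbounded: the term $\bigl[(I-K^*K)^{1/2},A_0\bigr](I-K^*K)^{-1/2}$ is not obviously bounded, and controlling it requires another pass through the Riccati equation. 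You should replace this step by the paper's argument in Theorem~\ref{c1}, which localizes $\spec(L)$ directly through the Schur complement $S_0(\la)=A_0-\la-B(A_1-\la)^{-1}C$ and a Neumann series (Lemmas~\ref{schur} and~\ref{elementary}), never touching $\widetilde A_0$ at all; with the radius corrected to $r_V$ (or to the cruder $\|V\|$), the disjointness $\dist(\sigma_0',\sigma_1')\ge d-2r_V>0$ and hence $\spec(L)=\sigma_0'\,\dot\cup\,\sigma_1'$ follow as you intend.
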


The bound \eqref{Sin2Tin} relies on the disjointness of the spectral
sets $\sigma_0$ and $\sigma_1$ of the unperturbed operator $A$ and
involves the distance between $\sigma_0$ and $\sigma_1$. Therefore,
this bound (as well as the other bounds from
\cite[The\-o\-rem~5.8]{AlMoSh}) is an \emph{a priori estimate}. In
the present paper, we establish bounds on the operator angles
$\Theta_i$ that involve at least one of the perturbed spectral sets
$\sigma'_0$ and $\sigma'_1$. In general, for these new bounds to
hold, the disjointness \eqref{d} of the sets $\sigma_0$ and
$\sigma_1$ is not required at all.

Our first main result is a \emph{semi-a posteriori bound} on the
operator angles $\Theta_i$ involving the distances
$\dist(\sigma_0,\sigma'_1)$ and/or $\dist(\sigma_1,\sigma'_0)$
between one unperturbed and one perturbed spectral~set.

\begin{theorem}
\label{ThTheta}
Suppose that $L$ has a maximal uniformly positive reducing subspace
$\fH'_0$ in the Krein space $\fK=\{\fH,J\}$.
\begin{enumerate}
\item[(i)] If for some $i=0,1$ the spectral sets $\sigma_i$
and $\sigma'_{1-i}$ are disjoint, i.e.
\begin{equation}
\label{delIn1} \delta_i:=\dist(\sigma_i,\sigma'_{1-i})>0,
\end{equation}
then the operator angles $\Theta_j$ satisfy the bound
\begin{equation}
\label{TanT0}
\|\tan\Theta_j\|\leq\frac{\pi}{2}\frac{\|V\|}{\delta_i}
\quad\text{for both \,}j=0,1.
\end{equation}
\item[(ii)] If, in addition, one of the sets $\sigma_i$ and
$\sigma'_{1-i}$ satisfying \eqref{delIn1}
lies in a finite or infinite gap of the other one, i.e.
\begin{equation}
\label{disp1} \conv(\sigma_i)\cap\sigma'_{1-i}=\emptyset\text{\, or
\,}\sigma_i\cap\conv(\sigma'_{1-i})=\emptyset,
\end{equation}
then we have the stronger estimate
\begin{equation}
\label{TanT0s} \|\tan\Theta_j\|\leq\frac{\|V\|}{\delta_i} \quad
\text{for both \,} j=0,1.
\end{equation}
\end{enumerate}
\end{theorem}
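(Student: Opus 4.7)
The plan is to reduce both operator angles to the norm of the Riccati contraction $K$ associated with $\fH'_0$, rewrite \eqref{RicABB} as a Sylvester-type equation whose coefficients are $A_j$ and an operator with spectrum $\sigma'_{1-i}$, and then apply the classical Davis--Kahan--Rosenthal bound after turning the second coefficient into a genuine self-adjoint operator by means of a modified, but equivalent, inner product on $\fH_0$ (or $\fH_1$).

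By Remark~\ref{Rangular} and Lemma~\ref{Lmaxu}, $\fH'_0$ is the graph of a unique strict contraction $K\in\cB(\fH_0,\fH_1)$ solving \eqref{RicABB}, and $\fH'_1$ is the graph of $K^*\in\cB(\fH_1,\fH_0)$; hence $\tan\Theta_0=|K|$, $\tan\Theta_1=|K^*|$, and $\|\tan\Theta_j\|=\|K\|$ for $j=0,1$, so it suffices to bound $\|K\|$. Straightforward rearrangement of \eqref{RicABB} and its adjoint gives the two Sylvester equations
\begin{equation*}
A_1K-K\Lambda_0=B^*,\qquad A_0K^*-K^*\Lambda_1=-B,
\end{equation*}
where $\Lambda_0:=A_0+BK$ on $\fH_0$ and $\Lambda_1:=A_1-B^*K^*$ on $\fH_1$. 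Through the graph parametrizations $x\mapsto x+Kx$ and $y\mapsto K^*y+y$ the operators $\Lambda_0,\Lambda_1$ are similar to $L|_{\fH'_0}$ and $L|_{\fH'_1}$, whence $\spec(\Lambda_0)=\sigma'_0$ and $\spec(\Lambda_1)=\sigma'_1$.

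The decisive observation is that, although $\Lambda_0$ is not self-adjoint on $\fH_0$ with its original inner product, it is self-adjoint on $\fH_0$ equipped with the equivalent inner product $\langle x,y\rangle_K:=((I_{\fH_0}-K^*K)x,y)$. Indeed, $U\colon x\mapsto x+Kx$ satisfies $[Ux,Uy]=\langle x,y\rangle_K$, so it is a unitary isomorphism from $(\fH_0,\langle\cdot,\cdot\rangle_K)$ onto the Hilbert space $(\fH'_0,[\cdot,\cdot])$, whose inner product is positive definite since $\fH'_0$ is uniformly positive. As $L|_{\fH'_0}$ is self-adjoint on $(\fH'_0,[\cdot,\cdot])$ by the $J$-self-adjointness of $L$, the operator $\Lambda_0=U^{-1}L|_{\fH'_0}U$ is self-adjoint on $(\fH_0,\langle\cdot,\cdot\rangle_K)$. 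An analogous modification with $\langle u,v\rangle_{K^*}:=((I_{\fH_1}-KK^*)u,v)$ makes $\Lambda_1$ self-adjoint on $\fH_1$.

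For part~(i) with $i=1$, view $K$ and $B^*$ as bounded operators from $(\fH_0,\langle\cdot,\cdot\rangle_K)$ into $\fH_1$; since $A_1$ and $\Lambda_0$ are now self-adjoint on Hilbert spaces with $\dist(\spec A_1,\spec\Lambda_0)=\delta_1>0$, the classical Davis--Kahan--Rosenthal estimate for Sylvester equations between self-adjoint operators yields $\|K\|'\leq\frac{\pi}{2\delta_1}\|B^*\|'$, with the primed norms taken from $(\fH_0,\langle\cdot,\cdot\rangle_K)$ to $\fH_1$, and with $\pi/2$ sharpened to $1$ under the gap hypothesis~\eqref{disp1}. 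Spectral calculus on $|K|$ gives $\|K\|'=\|K(I_{\fH_0}-K^*K)^{-1/2}\|=\|K\|/\sqrt{1-\|K\|^2}$, and submultiplicativity yields $\|B^*\|'\leq\|V\|/\sqrt{1-\|K\|^2}$; the $\sqrt{1-\|K\|^2}$ factors cancel, producing $\|K\|\leq\frac{\pi\|V\|}{2\delta_1}$ (resp.\ $\|K\|\leq\|V\|/\delta_1$). Case $i=0$ is handled by the symmetric argument applied to $A_0K^*-K^*\Lambda_1=-B$ on $(\fH_1,\langle\cdot,\cdot\rangle_{K^*})$. The main obstacle is the correct identification of the modified Hilbert structures in which $\Lambda_0,\Lambda_1$ become genuinely self-adjoint so that Davis--Kahan--Rosenthal is directly applicable, together with the remarkably clean cancellation of the $\sqrt{1-\|K\|^2}$ factors that converts what would otherwise be an implicit bound into the explicit one stated.
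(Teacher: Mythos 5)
Your proof is correct and follows essentially the same route as the paper's (Theorem \ref{ThTheta} via Theorem \ref{Tmain}): reduce to bounding the contractive Riccati solution $K$ via \eqref{KThet} and \eqref{TKKT}, view \eqref{RicABB} as a Sylvester equation with coefficients $Z_0=A_0+BK$ and $A_1$, and apply Theorem \ref{TSylB} after making $Z_0$ self-adjoint by means of the weight $I-K^*K$. Your renorming of $\fH_0$ by $\langle x,y\rangle_K=((I-K^*K)x,y)$ is exactly the paper's similarity transformation $\Lambda_0=(I-K^*K)^{1/2}Z_0(I-K^*K)^{-1/2}$ in disguise (the map $(I-K^*K)^{1/2}$ is the unitary identification of the renormed space with the original one), and the cancellation of the $\sqrt{1-\|K\|^2}$ factors is the same as in \eqref{YB}--\eqref{Xnorm}.
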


Our second main result is a completely \emph{a posteriori estimate}
since it only involves the distance between the spectral sets
$\sigma'_0$ and $\sigma'_1$ associated with the perturbed operator
$L$.

\begin{theorem}
\label{ThTgen}
Suppose that $L$ has a maximal uniformly positive reducing subspace
$\fH'_0$ in the Krein space $\fK=\{\fH,J\}$.
\begin{enumerate}
\item[(i)] If the spectral sets $\sigma'_0$ and $\sigma'_1$ are disjoint, i.e.
\begin{equation}
\label{dist'} \widehat{\delta}:=\dist(\sigma'_0,\sigma'_1)>0,
\end{equation}
then the operator angles $\Theta_j$ satisfy the estimate
\begin{equation}
\label{T2TN}
\|\tan\Theta_j\|\leq\frac{\pi}{2}\frac{\|V\|}{\widehat{\delta}}
\quad\text{for both \,}j=0,1.
\end{equation}

\item[(ii)] If,  in addition, for some $i=0,1$ the set $\sigma'_i$
is bounded and lies in a finite or infinite gap of $\sigma'_{1-i}$,
i.e.
\begin{equation}
\label{disp3is} \conv(\sigma'_i)\cap\sigma'_{1-i}=\emptyset,
\end{equation}
then we have the stronger estimate
\begin{equation}
\label{T2TNs}
\|\tan\Theta_j\|\leq\frac{\|V\|}{\sqrt{\widehat{\delta}^2+\|V\|^2}}
\quad \text{ for both \,} j=0,1.
\end{equation}

\item[(iii)] Furthermore, if \,both spectral sets $\sigma'_0$ and
$\sigma'_1$ are bounded and subordinated, i.e.
\begin{equation}
\label{disp2is} \conv(\sigma'_0)\cap\conv(\sigma'_1)=\emptyset,
\end{equation}
then we have the even stronger estimate
\begin{equation}
\label{T2T} \|\tan 2\Theta_j\|\leq\frac{2\|V\|}{\widehat\delta}
\quad \text{ for both \,} j=0,1.
\end{equation}
\end{enumerate}
\end{theorem}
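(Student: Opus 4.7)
The plan is to convert the nonlinear Riccati equation for the angular operator $K$ of $\fH'_0$ into a linear Sylvester-type identity $K\widehat{A}_0-\widehat{A}_1K=G$ whose coefficients $\widehat{A}_j$ are self-adjoint with spectra exactly $\sigma'_j$, and then to invoke the sharp Sylvester/Riccati norm bounds of Davis--Kahan and Kostrykin--Makarov--Motovilov.

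By the graph representation (Remark~\ref{Rangular} and Lemma~\ref{Lmaxu}), $\fH'_0$ is the graph of a unique strict contraction $K\in\cB(\fH_0,\fH_1)$ with $\|K\|<1$, $K$ satisfies the Riccati equation \eqref{RicABB}, and $\|\tan\Theta_j\|=\|K\|$ for $j=0,1$. Introducing the positive square roots $S_0:=(I-K^*K)^{1/2}$ and $S_1:=(I-KK^*)^{1/2}$ (which by functional calculus applied to $K(I-K^*K)=(I-KK^*)K$ obey the intertwining $S_1K=KS_0$), the graph parametrization $x\mapsto(x,Kx)$ maps $L|_{\fH'_0}$ to $\tA_0:=A_0+BK$ on $\fH_0$ and similarly $L|_{\fH'_1}$ to $\tA_1:=A_1-KB$ on $\fH_1$, with $\spec(\tA_j)=\sigma'_j$. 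Since $L|_{\fH'_j}$ is Krein-self-adjoint and the Krein inner product pulls back under the parametrization to the equivalent Hilbert inner product $(\cdot,S_j^2\,\cdot)$, the similarity transforms $\widehat{A}_j:=S_j\tA_jS_j^{-1}$ are genuine self-adjoint operators on the standard Hilbert space $\fH_j$ with $\spec(\widehat{A}_j)=\sigma'_j$. Rewriting \eqref{RicABB} as $K\tA_0-\tA_1K=KBK-B^*$ and conjugating by $S_1(\cdot)S_0^{-1}$, the intertwining relations yield the central identity
\begin{equation*}
K\widehat{A}_0-\widehat{A}_1K=G,\qquad G:=S_1(KBK-B^*)S_0^{-1},
\end{equation*}
a Sylvester equation with self-adjoint coefficients whose spectra are precisely $\sigma'_0$ and $\sigma'_1$.

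To conclude, I apply the appropriate sharp Sylvester/Riccati bounds. For part~(i), the general Davis--Kahan estimate for self-adjoint coefficients of disjoint spectra gives $\|K\|\le(\pi/2)\|G\|/\widehat{\delta}$; combined with the bound $\|G\|\le\|V\|$ (to be proved by exploiting the Krein-unitarity of the block-diagonalizing transformation that carries $L$ to $\widehat{A}_0\oplus\widehat{A}_1$ together with the Riccati equation) this yields \eqref{T2TN}. For part~(ii), the gap hypothesis \eqref{disp3is} admits the sharper Sylvester bound with constant $1$, and combined with a refined estimate on $\|G\|$ carrying an extra $\sqrt{I-K^*K}$-factor one obtains $\|K\|\widehat{\delta}\le\|V\|\sqrt{1-\|K\|^2}$, which rearranges to \eqref{T2TNs}. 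For part~(iii), the subordination \eqref{disp2is} permits invocation of the $\tan2\Theta$ theorem of Kostrykin--Makarov--Motovilov in its sharpest form and yields $\|K\|/(1-\|K\|^2)\le\|V\|/\widehat{\delta}$, i.e.\ \eqref{T2T}. The main obstacle will be extracting the sharper norm bounds on $G$ for parts~(ii) and~(iii) beyond what the crude triangle estimate $\|KBK-B^*\|\le(1+\|K\|^2)\|V\|$ provides; this should follow from a careful algebraic manipulation exploiting both \eqref{RicABB} and the intertwinings $S_1K=KS_0$, $S_0K^*=K^*S_1$ between $K$ and the $S_j$.
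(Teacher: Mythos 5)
Your opening reduction is the right first move and, for part (i), essentially reproduces the paper's argument (Lemma \ref{LRKK} followed by Theorem \ref{Ttanp}); but there is a computational slip there, and the strategy you outline for parts (ii) and (iii) cannot be completed as described.

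The slip: the compression of $L$ to $\fH'_1=\cG(K^*)$ under the parametrization $y\mapsto(K^*y,y)$ is $Z_1=A_1-B^*K^*$, not $A_1-KB$ (these differ unless $KB$ is self-adjoint), and it is $S_1(A_1-B^*K^*)S_1^{-1}=\Lambda_1$ that is self-adjoint (cf.\ \eqref{L01}); with your $\tA_1$ the conjugated operator $S_1\tA_1S_1^{-1}$ need not be self-adjoint, so the Sylvester bounds of Theorem \ref{TSylB} do not apply to your central identity. With the correct $Z_1$ the Riccati equation gives $KZ_0-Z_1K=-B^*(I-K^*K)$, and conjugation by $S_1(\cdot)S_0^{-1}$ together with $S_1K=KS_0$ yields exactly \eqref{Ric-t2t}: $K\Lambda_0-\Lambda_1K=-S_1B^*S_0$. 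The right-hand side is then bounded by $\|V\|$ with no further work, since $S_0$ and $S_1$ are contractions; your $G=S_1(KBK-B^*)S_0^{-1}$ instead carries the factor $S_0^{-1}$ of norm $(1-\|K\|^2)^{-1/2}>1$, and the deferred claim $\|G\|\le\|V\|$ is not evident for it. Once corrected, part (i) follows from \eqref{XBg} exactly as in the paper.

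The genuine gap concerns (ii) and (iii): these cannot be obtained by sharpening the norm bound on the right-hand side and then quoting a Sylvester or $\tan2\Theta$ theorem. The target inequalities are $\|K\|\,\widehat\delta\le\|V\|\sqrt{1-\|K\|^2}$ and $\|K\|/(1-\|K\|^2)\le\|V\|/\widehat\delta$, and the factors $\sqrt{1-\|K\|^2}$, $(1-\|K\|^2)^{-1}$ are not norm properties of $-S_1B^*S_0$: one can have $\|S_1B^*S_0\|=\|B\|$ exactly (for instance if $B^*$ carries a unit vector of $\Ker(K)$, where $S_0$ acts as the identity, into $\Ker(K^*)$, where $S_1$ acts as the identity) while $\|K\|$ is close to $1$. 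So no purely algebraic estimate of $\|G\|$ will produce the needed improvement; the Riccati structure must be used pointwise, not in norm. Likewise, the Kostrykin--Makarov--Motovilov $\tan2\Theta$ theorem is an a priori statement about a self-adjoint off-diagonal perturbation with subordinated diagonal spectra; after your reduction you hold a linear Sylvester identity for an already fixed $K$, which is not a problem of that form. What is actually required --- and what the paper does in Theorems \ref{Ttans} and \ref{Ttan2T} --- is to localize the identity $K\Lambda_0-\Lambda_1K=-S_1B^*S_0$ at the top of the spectrum of $|K|$: write $K=U|K|$, test against (approximate) eigenvectors $x_n$ of $|K|$ at $\kappa=\|K\|$, on which $S_0x_n\approx\sqrt{1-\kappa^2}\,x_n$, and then either solve for $U|K|$ via $\Lambda_1^{-1}$ (part (ii), where the boundedness of $\sigma'_i$ enters through $\|\Lambda_0\|\le a-\widehat\delta$), or multiply by $U^*$ and take quadratic forms, using subordination to get $|(\Lambda_0x_n,x_n)-(\Lambda_1Ux_n,Ux_n)|\ge\widehat\delta$ (part (iii)). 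This spectral-localization step is the essential idea missing from your proposal.
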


The bounds \eqref{TanT0s} and \eqref{T2T} as well as the bound
\eqref{T2TNs} in the case of a finite gap are optimal (see Remarks
\ref{XXYYY1}--\,\ref{XXYYY3}). Moreover, the sharp a priori
bound \eqref{Sin2Tin} turns out to be a corollary either to Theorem
\ref{ThTheta}\,(ii) or to The\-o\-rem~\ref{ThTgen}\,(iii) (see
The\-o\-rem~\ref{Tbapr-gen} and Re\-mark~\ref{Rlast1}, respectively).

The semi-a posteriori bounds of Theorem~\ref{ThTheta} and the
completely a posteriori ones of Theorem~\ref{ThTgen} complement the
a priori norm bounds on the variation of spectral subspaces for
$J$-self-adjoint operators proved in \cite[Theorem~5.8]{AlMoSh}.
The sharp norm bounds of these theorems represent
analogues of the celebrated trigonometric estimates for self-adjoint
operators known as Davis-Kahan $\sin\Theta$, $\sin2\Theta$,
$\tan\Theta$, and $\tan2\Theta$ theorems (see \cite{DK70} and the
subsequent papers \cite{AlMoSel,KMM3,KMM4,KMM5,MotSel}): the bound
\eqref{Sin2T1} may be called the a priori $\sin2\Theta$ theorem for
$J$-self-adjoint operators; the bounds \eqref{TanT0s} and
\eqref{T2TNs} may be called the semi-a posteriori and completely a
posteriori $\tan\Theta$ theorems, respectively; the bound
\eqref{T2T} may be called the a posteriori $\tan2\Theta$ theorem.

The plan of the paper is as follows. In Sec\-ti\-on~\ref{SecOR} we
give necessary definitions and recall some basic results on the
block diagonalization of $J$-self-adjoint $2\times2$ block operator
matrices. In Sec\-ti\-on~\ref{SecKbounds} we establish several
semi-a posteriori and completely a posteriori norm bounds on
uniformly contractive solutions to operator Riccati equations of the
form \eqref{RicABB}. Using these results, we prove both
The\-o\-rems~\ref{ThTheta} and~\ref{ThTgen} in
Sec\-ti\-on~\ref{SecMain}. Assuming that the spectral sets
\eqref{sigs} do not intersect and
$\|V\|<\frac{1}{2}\dist(\sigma_0,\sigma_1)$, in
Sec\-ti\-on~\ref{SecSpec} we obtain sharp estimates on the position
of the isolated components of the spectrum of $L=A+V$ confined in
the closed $\|V\|$-neighbourhoods of the sets $\sigma_0$ and
$\sigma_1$. In this section, we also establish bounds on the
spectrum for more general $2\times2$ block operator matrices that
need not be $J$-self-adjoint. In Sec\-ti\-on~\ref{SecBapr}, we
combine The\-o\-rems~\ref{ThTheta} and~\ref{ThTgen} with the
spectral estimates of Sec\-ti\-on~\ref{SecSpec} and discuss the
emerging a priori norm bounds on variation of the spectral subspaces
of a self-adjoint operator on a Hilbert space under $J$-self-adjoint
perturbations. Finally, in Sec\-ti\-on~\ref{SecExHO} we apply some
of the bounds obtained to the Schr\"odinger operator describing an
$N$-dimensional isotropic harmonic oscillator under a
$\mathcal{PT}$-symmetric perturbation (see e.g.\ \cite{BenderRep});
here the parity operator $\cP$ plays the role of the self-adjoint
involution $J$ (see \cite{AlbFK,AlMoSh,LT2004-PT}.

The following notations are used thro\-ug\-h\-o\-ut the paper. By a
subspace of a Hilbert space we always mean a closed linear subset.
The identity operator on a subspace (or on the whole Hilbert space)
$\fM$ is denoted by $I_\fM$; if no confusion arises, the index $\fM$
is often omitted. The Banach space of bounded linear operators from
a Hilbert space $\fH$ to a Hilbert space $\fH'$ is denoted by
$\cB(\fH,\fH')$ and by $\cB(\fH)$ if $\fH=\fH'$. The symbol
$\dot\cup$ is used for the union of two disjoint sets. By
$O_r(\Sigma)$, $r\geq 0$, we denote the closed $r$-neigh\-bourhood
of a Borel set $\Sigma$ in the complex plane $\bbC$, i.e.\
$O_r(\Sigma)=\{z\in\bbC\big|\,\dist(z,\Sigma)\leq r\}$. By a finite
gap of a closed Borel set $\sigma\subset\bbR$,
$\sigma\neq\emptyset$, we understand an open interval $(a,b)$,
$-\infty<a<b<\infty$, such that $\sigma\cap(a,b)=\emptyset$ and
$a,b\in\sigma$; an infinite gap of $\sigma$ is a semi-infinite
interval $(a,b)$ such that $\sigma\cap(a,b)=\emptyset$ and either
$a=-\infty$, $|b|<\infty$, and $b\in\sigma$ or $|a|<\infty$,
$a\in\sigma$, and~$b=\infty$.


\section{Preliminaries}
\label{SecOR}

In this section we recall some results on the block diagonalization
of $J$-self-adjoint operator matrices in terms of solutions to the
related operator Riccati equations and on norm bounds for solutions
to operator Sylvester equations. We also recall a couple of
statements on maximal uniformly definite subspaces of a Krein space.
For notational setup we adopt the following

\begin{hypothesis}
\label{Hypo1} Let $J$ be a self-adjoint involution on a Hilbert
space $\fH$, $J\neq I$, and let $\fH_0$ and $\fH_1$ be the spectral
subspaces \eqref{H0H1} of $\,J$. Also assume that $A$ is a (possibly
unbounded) self-adjoint operator on $\fH$ diagonal with respect the
decomposition \eqref{Hsum}, which means that $\fH_0$ and $\fH_1$ are
the reducing subspaces of $A$ and the representation \eqref{Adiag}
holds with $A_0$ and $A_1$ the self-adjoint operators on $\fH_0$ and
$\fH_1$, respectively. Let $V$ be a bounded operator on $\fH$
admitting, relative to \eqref{Hsum}, the representation \eqref{Voff}
where $B\in\cB(\fH_1,\fH_0)$. Finally, let
\begin{equation}
\label{L} L=A+V=\left(\begin{array}{rl} A_0 & B\\ -B^* &
A_1\end{array}\right),\quad\dom(L)=\dom(A).
\end{equation}
\end{hypothesis}

With a block operator matrix $L$ of the form \eqref{L} we associate
the operator Riccati equation \eqref{RicABB} where $K$ is a linear
operator from $\fH_0$ to $\fH_1$. There are different concepts of
solutions to such an equation; here we recall the notion of weak and
strong solutions (see \cite{AMM,AlMoSh}).

\begin{definition}
\label{DefSolRic} Assume that Assumption \ref{Hypo1} is
satisfied. A bounded operator $K\in\cB(\fH_0,\fH_1)$ is said to be
a \emph{weak solution} to the Riccati equation \eqref{RicABB} if
$$
\begin{array}{c}
( KA_0x,y)-( Kx,A_1^*y)+( KBKx,y)=-(B^*x,y) \quad \text{ for all \ }
x\in \dom (A_0), \ y\in \dom(A_1^*).
\end{array}
$$
A bounded operator $K\in\cB(\fH_0,\fH_1)$ is called a \emph{strong
solution} to the Riccati equation \eqref{RicABB} if
\begin{equation}
\label{ranric} \ran\bigl({K}|_{\dom(A_0)}\bigr)\subset\dom(A_1)
\end{equation}
and
\begin{equation}
\label{rics} KA_0x-A_1Kx+KBKx=-B^*x \quad  \text{ for all \ } x\in
\dom(A_0).
\end{equation}
\end{definition}

\begin{remark}
\label{Rweak} Obviously, every strong solution
$K\in\cB(\fH_0,\fH_1)$ to the Riccati equation \eqref{RicABB} is
also a weak solution. In fact, the two notions are equivalent by
\cite[Lemma~5.2]{AM01}: every weak solution $K\in\cB(\fH_0,\fH_1)$
of the operator Riccati equation \eqref{RicABB} is also a strong
solution.
\end{remark}

\begin{remark}
\label{RKKs} With the block operator matrix \eqref{L}, one can also
associate the operator Riccati equation
\begin{equation}
\label{RicABB1} K'A_1-A_0K'-K'B^*K'=B
\end{equation}
where $K'$ is a linear operator from $\fH_1$ to $\fH_0$. {}From
Definition \ref{DefSolRic} it immediately follows that
$K\in\cB(\fH_0,\fH_1)$ is a weak (and hence strong) solution to
\eqref{RicABB} if and only if $K'=K^*$ is a weak (and hence strong)
solution to \eqref{RicABB1}.
\end{remark}

\begin{definition}
\label{DGraph} Let $\fM$ be a subspace of the Hilbert space $\fH$,
$\fM^\perp=\fH\ominus\fM$ its orthogonal complement, and $K$ a
bounded linear operator from $\fM$ to $\fM^\perp$. Denote by $P_\fM$
and $P_{\fM^\perp}$ the orthogonal projections in $\fH$ onto the
subspaces $\fM$ and $\fM^\perp$, respectively. The set
$$
\cG(K)=\{x\in\fH\,|\,\,P_{\fM^\perp}x=KP_\fM x\}
$$
is called the \emph{graph subspace} associated with the operator
$K$.
\end{definition}

\begin{remark}
\label{Rangular} If a subspace $\fG\subset\fH$ is a graph
$\fG=\cG(K)$ of a bounded linear operator
{$K\in\cB(\fM,\fM^\perp)$}, then $K$ is called the
\emph{angular operator} for the (ordered) pair of subspaces $\fM$
and $\fG$; the usage of this term is explained by the equality (see
\cite{KMM2}; cf. \cite{DK70} and \cite{Halmos:69})
\begin{equation}
\label{KThet} |K|=\tan\Theta(\fM,\fG),
\end{equation}
where $|K|$ is the modulus of $K$, $|K|=\sqrt{K^*K}$,  and
$\Theta(\fM,\fG)$ is the operator angle between the subspaces $\fM$
and $\fG$ measured relative to the subspace $\fM$ (see definition
\eqref{ThetaDef}).
\end{remark}

It is well known that strong solutions to the Riccati equations
\eqref{RicABB} and \eqref{RicABB1} determine invariant subspaces for
the operator matrix $L$ by means of their graph subspaces (see,
e.g., \cite{AMM} and \cite{LMMT}). More precisely, the following
correspondences hold (see, e.g., \cite[Lemma~2.4]{AlMoSh}).

\begin{lemma}
\label{Lgraph} Assume that Assumption \ref{Hypo1} holds.
Then the graph $\cG(K)$ of an operator $K\in\cB(\fH_0,\fH_1)$
satisfying \eqref{ranric} is an invariant subspace for the operator
matrix $L$ if and only if $K$ is a strong solution to the operator
Riccati equation \eqref{RicABB}. Similarly, the graph $\cG(K')$ of
an operator 
$K'\in\cB(\fH_1,\fH_0)$ is an invariant
subspace for $L$ if and only if $K'$ is a strong solution to the
Riccati equation \eqref{RicABB1}.
\end{lemma}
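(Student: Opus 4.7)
The plan is to convert the invariance condition $L(\cG(K)\cap\dom(L))\subset\cG(K)$ into a pointwise algebraic identity on $\dom(A_0)$ and to recognize this identity as the Riccati equation \eqref{RicABB}. The hypothesis \eqref{ranric}, together with the product structure $\dom(L)=\dom(A_0)\oplus\dom(A_1)$, will give the clean description
$$
\cG(K)\cap\dom(L)=\{x_0\oplus Kx_0 \,:\, x_0\in\dom(A_0)\}.
$$
This is the only place where \eqref{ranric} enters the argument.

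The computational core is one line: applying the block matrix \eqref{L} to $x_0\oplus Kx_0$ and imposing that the image lie back in $\cG(K)$ (i.e.\ that its $\fH_1$-component equal $K$ applied to its $\fH_0$-component) rearranges at once to $KA_0x_0-A_1Kx_0+KBKx_0=-B^*x_0$, which is \eqref{RicABB} evaluated at $x_0$. Since this equivalence holds pointwise and is reversible, invariance of $\cG(K)$ under $L$ is equivalent to $K$ being a strong solution of \eqref{RicABB} in the sense of Definition \ref{DefSolRic}.

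For the second assertion, I would run the same argument with the roles of $\fH_0$ and $\fH_1$ interchanged: representing elements of $\cG(K')$ as $K'y_1\oplus y_1$ with $y_1\in\dom(A_1)$, imposing the natural analog of \eqref{ranric} for $K'$, and comparing the image under $L$ against the graph condition produces \eqref{RicABB1} by the same algebra. Alternatively, Remark \ref{RKKs} provides an adjoint-based shortcut, but the direct computation is more transparent.

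The only point requiring attention is the usual domain bookkeeping for an unbounded $L$: invariance of a subspace $\fM$ under $L$ must be interpreted as $L(\fM\cap\dom(L))\subset\fM$, and one must check, using \eqref{ranric} and the direct-sum structure of $\dom(L)$, that every element of $\dom(L)$ inside $\cG(K)$ is of the form $x_0\oplus Kx_0$ with $x_0\in\dom(A_0)$. Once these are in place, no density or closure argument is needed, and I do not expect any substantive obstacle.
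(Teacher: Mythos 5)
Your proposal is correct and is exactly the standard argument behind this lemma, which the paper itself does not reprove but imports from \cite[Lemma~2.4]{AlMoSh}: condition \eqref{ranric} identifies $\cG(K)\cap\dom(L)$ with $\{x_0\oplus Kx_0:\,x_0\in\dom(A_0)\}$, and the graph condition on $L(x_0\oplus Kx_0)$ rearranges in one line to \eqref{rics}. Your observation that the second assertion needs the analogue of \eqref{ranric} for $K'$ (i.e.\ $\ran\bigl(K'|_{\dom(A_1)}\bigr)\subset\dom(A_0)$, which is part of what ``strong solution to \eqref{RicABB1}'' means) is the right bit of domain bookkeeping, so there is nothing to add.
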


The next two statements are well-known facts in the theory of spaces
with indefinite metric (see, e.g., \cite[Section I.8, in particular,
Corollaries I.8.13 and I.8.14]{AI}).

\begin{lemma}
\label{Lmaxu} A subspace $\fL$ is a maximal uniformly positive
subspace of the Krein space $\fK$ if and only if there is a uniform
contraction $K \in \cB(\fH_0,\fH_1)$ (i.e.\ $\|K\|<1$) such that
$\fL$ is the graph $\cG(K)$ of the contraction $K$. Similarly, a
subspace $\fL'$ is a maximal uniformly negative subspace of the
Krein space $\fK$ if and only if  $\fL'$ is the graph $\cG(K')$ of a
uniform contraction $K'\in\cB(\fH_1,\fH_0)$.
\end{lemma}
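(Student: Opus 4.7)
Since the two statements are symmetric under $J\mapsto-J$ (uniformly negative subspaces of the Krein space $\{\fH,J\}$ become uniformly positive in $\{\fH,-J\}$), I would prove only the characterization of maximal uniformly positive subspaces in detail.

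For the sufficiency direction, given $K\in\cB(\fH_0,\fH_1)$ with $\|K\|<1$, I would verify uniform positivity of $\cG(K)=\{x_0+Kx_0:x_0\in\fH_0\}$ by a direct computation. Using $Jx_0=x_0$ for $x_0\in\fH_0$ and $JKx_0=-Kx_0$ for $Kx_0\in\fH_1$, one obtains
$$
[x_0+Kx_0,x_0+Kx_0]=\|x_0\|^2-\|Kx_0\|^2\ge(1-\|K\|^2)\|x_0\|^2,
$$
while $\|x_0+Kx_0\|^2=\|x_0\|^2+\|Kx_0\|^2\le(1+\|K\|^2)\|x_0\|^2$, giving uniform positivity with constant $\gamma=(1-\|K\|^2)/(1+\|K\|^2)>0$. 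Maximality then follows by a short argument: if $\fL''\supsetneq\cG(K)$ is uniformly positive and $y=y_0+y_1\in\fL''\setminus\cG(K)$, then the vector $z:=y-(y_0+Ky_0)=y_1-Ky_0\in\fH_1$ is a nonzero element of $\fL''$, and $[z,z]=-\|z\|^2<0$ rules out uniform positivity of $\fL''$.

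Conversely, given a maximal uniformly positive subspace $\fL$ with constant $\gamma$, I would decompose $x=x_0+x_1\in\fH_0\oplus\fH_1$ for $x\in\fL$ and use the inequality $\|x_0\|^2-\|x_1\|^2\ge\gamma(\|x_0\|^2+\|x_1\|^2)$ to deduce $\|x_1\|\le q\|x_0\|$ with $q:=\sqrt{(1-\gamma)/(1+\gamma)}<1$. This bound shows that $T:=P_{\fH_0}|_\fL$ is injective and bounded below, so $T\fL$ is a closed subspace of $\fH_0$. The only nontrivial step is surjectivity of $T$: if $T\fL\ne\fH_0$, I would pick a unit vector $u_0\in\fH_0\ominus T\fL$ and verify, using $u_0\perp x_0$ for all $x_0\in T\fL$, that $\widetilde{\fL}:=\fL+\mathrm{span}\{u_0\}$ remains uniformly positive with constant $\min(\gamma,1)$, contradicting the maximality of $\fL$. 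Once $T$ is bijective, the operator $K:=P_{\fH_1}T^{-1}\in\cB(\fH_0,\fH_1)$ satisfies $\fL=\cG(K)$ with $\|K\|\le q<1$. The main obstacle is precisely this surjectivity step, since it is the only place where the maximality hypothesis enters and where one must check that the uniform-positivity constant of $\fL$ survives a finite-dimensional enlargement.
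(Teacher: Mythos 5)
Your argument is correct. Note, however, that the paper does not prove Lemma \Ref{Lmaxu} at all: it is quoted as a standard fact of Krein space theory with a reference to Azizov--Iokhvidov (Corollaries I.8.13 and I.8.14 of \cite{AI}), so any self-contained proof is "different" from the paper by default. Your proof is the standard angular-operator argument and all the steps check out: the identity $[x_0+Kx_0,x_0+Kx_0]=\|x_0\|^2-\|Kx_0\|^2$ is right because $x_0\perp Kx_0$ kills the cross terms; the maximality of $\cG(K)$ follows as you say because any strictly larger subspace contains a nonzero vector of $\fH_1$, on which $[\,\cdot\,,\cdot\,]$ is negative definite; and in the converse direction the inequality $\|x_1\|\le q\|x_0\|$ with $q=\sqrt{(1-\gamma)/(1+\gamma)}<1$ indeed makes $T=P_{\fH_0}|_{\fL}$ injective with closed range, while the surjectivity step is handled correctly --- for $w=x+\lambda u_0$ with $x=x_0+x_1\in\fL$ and $u_0\perp T\fL$ one gets $[w,w]=(\|x_0\|^2-\|x_1\|^2)+|\lambda|^2\ge\min(\gamma,1)\,\|w\|^2$, and $\fL+\mathrm{span}\{u_0\}$ is closed as the sum of a closed and a finite-dimensional subspace, so maximality is genuinely violated. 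You correctly identify this surjectivity step as the only place where maximality enters. The one thing you gain over the paper's treatment is self-containedness; what the citation buys the authors is brevity and the full generality of the Krein-space setting (the same corollaries in \cite{AI} also cover non-uniform definiteness, where the angular operator is merely contractive rather than uniformly contractive).
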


\begin{lemma}
\label{Lspn} Let $\fL$ be a maximal uniformly positive subspace of
the Krein space $\fK$. Then the orthogonal complement
$\fL^{[\perp]}$ of $\fL$ in $\fK$ is a maximal uniformly negative
subspace. If $\fL$ is a graph subspace, $\fL=\cG(K)$ with $K \in
\cB(\fH_0,\fH_1)$, then $\fL^{[\perp]}$ is the graph of the adjoint
of $K$, i.e.\ $\fL^{[\perp]}=\cG(K^*)$, and
$\fL[+]\fL^{[\perp]}=\fK$.
\end{lemma}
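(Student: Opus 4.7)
The plan is to establish the three assertions in sequence, starting from Lemma~\ref{Lmaxu}: since $\fL$ is maximal uniformly positive, there is a strict contraction $K\in\cB(\fH_0,\fH_1)$ with $\|K\|<1$ such that $\fL=\cG(K)$. I will first compute $\fL^{[\perp]}$ explicitly, then read off the uniform negativity from Lemma~\ref{Lmaxu} applied to $K^*$, and finally verify the direct sum decomposition.

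First I would characterize $\fL^{[\perp]}$ by direct computation. A vector $y=y_0+y_1$ with $y_i\in\fH_i$ lies in $\fL^{[\perp]}$ if and only if $[y,x]=(Jy,x)=0$ for every $x\in\cG(K)$. Writing $x=x_0+Kx_0$ with $x_0\in\fH_0$ and using $Jy=y_0-y_1$ (which follows from \eqref{JPP}), this reduces to
\begin{equation*}
(y_0,x_0)-(y_1,Kx_0)=(y_0-K^*y_1,x_0)=0\quad\text{for all \,}x_0\in\fH_0,
\end{equation*}
i.e.\ $y_0=K^*y_1$. Hence $\fL^{[\perp]}=\cG(K^*)$, which is the second assertion. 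Since $\|K^*\|=\|K\|<1$, the ``negative half'' of Lemma~\ref{Lmaxu} applied to the uniform contraction $K^*\in\cB(\fH_1,\fH_0)$ then gives at once that $\cG(K^*)$ is a maximal uniformly negative subspace of $\fK$, proving the first assertion.

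For the decomposition $\fL[+]\fL^{[\perp]}=\fK$, the two summands are $J$-orthogonal by construction, so it suffices to show that the algebraic sum exhausts $\fH$. Given $y=y_0+y_1\in\fH$, I would look for $a_0\in\fH_0$ and $b_1\in\fH_1$ such that $(a_0+Ka_0)+(K^*b_1+b_1)=y$; this amounts to the pair $a_0+K^*b_1=y_0$ and $Ka_0+b_1=y_1$. Eliminating $b_1$ yields
\begin{equation*}
(I_{\fH_0}-K^*K)\,a_0=y_0-K^*y_1.
\end{equation*}
Since $\|K\|<1$, the operator $I_{\fH_0}-K^*K$ is boundedly invertible on $\fH_0$, so $a_0$, and then $b_1=y_1-Ka_0$, exist (and are unique), giving the required decomposition.

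There is no real obstacle here beyond bookkeeping: the key point is identifying $\fL^{[\perp]}$ with $\cG(K^*)$, after which Lemma~\ref{Lmaxu} does all the work for maximal uniform negativity, and invertibility of $I-K^*K$ takes care of the decomposition. The only subtlety is keeping the $J$-signs straight when translating from $[\cdot,\cdot]$ to the Hilbert inner product via \eqref{IpKs} and \eqref{JPP}.
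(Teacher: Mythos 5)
Your argument is correct. Note, however, that the paper does not prove Lemma~\ref{Lspn} at all: it is stated as a well-known fact and referred to \cite[Section I.8, Corollaries I.8.13 and I.8.14]{AI}, so there is no in-paper proof to compare against. What you have supplied is a clean, self-contained verification: the computation $[y,x]=(y_0-K^*y_1,x_0)$ correctly identifies $\fL^{[\perp]}$ with $\cG(K^*)$ (the cross terms $(y_0,Kx_0)$ and $(y_1,x_0)$ vanish by orthogonality of $\fH_0$ and $\fH_1$, which is implicit in your display and worth making explicit); the maximal uniform negativity then indeed follows from the second half of Lemma~\ref{Lmaxu} applied to the uniform contraction $K^*$; and the bounded invertibility of $I_{\fH_0}-K^*K$ for $\|K\|<1$ gives existence and uniqueness of the decomposition, hence $\fL[+]\fL^{[\perp]}=\fK$. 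The only thing I would add for completeness is the one-line remark that uniqueness of the decomposition (or the sign of $[x,x]$ on each summand) forces $\fL\cap\fL^{[\perp]}=\{0\}$, so that the sum is genuinely direct. Your route is exactly the standard angular-operator proof that the cited corollaries in \cite{AI} encapsulate.
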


Many more details on Krein spaces and linear operators therein may
be found in \cite{Langer62}, \cite{Langer82}, \cite{Bognar},
\cite{IKL} or \cite{AI}.

The following sufficient condition for a $J$-self-adjoint block
operator matrix of the form \eqref{L} to be similar to a
self-adjoint operator on $\fH$ was proved in \cite{AlMoSh}; for the
particular case where the spectra of the entries $A_0$ and $A_1$ are
subordinated, say $\max\spec(A_0)<\min\spec(A_1)$, closely related
results may be found in \cite[Theorem 4.1]{AdL1995} and
\cite[Theorem 3.2]{MenShk}.

\begin{theorem}[\cite{AlMoSh}, Theorem 5.2]
\label{Lss} Assume that $L=A+V$ satisfies Assumption \ref{Hypo1}.
Suppose that the Riccati equation \eqref{RicABB} has a weak (and
hence strong) solution $K\in \cB(\fH_0,\fH_1)$
such that $\|K\|<1$.
Then:
\begin{enumerate}
\item[(i)]  The operator matrix $L$ has purely real spectrum and it
is similar to a self-adjoint ope\-rator on $\fH$. In particular, the
equality
\begin{equation}
\label{TLam} L=T\Lambda T^{-1}
\end{equation}
holds, where $T$ is a bounded and boundedly invertible operator on
$\fH$ given by
\begin{equation}
\label{Ws} T=\left(\begin{array}{ll}
I & K^* \\
K & I
\end{array}\right)
\left(\begin{array}{cc}
I-K^*K & 0 \\
0 & I-KK^*
\end{array}\right)^{-1/2}
\end{equation}
and $\Lambda$ is a block diagonal self-adjoint operator on $\fH$,
\begin{equation}
\label{Lambda} \Lambda=\diag(\Lambda_0,\Lambda_1), \quad
\dom(\Lambda)=\dom(\Lambda_0)\oplus\dom(\Lambda_1),\,
\end{equation}
whose entries
\begin{equation}
\label{L0p}
\begin{array}{ll}
\Lambda_0=(I-K^*K)^{1/2}(A_0+BK)(I-K^*K)^{-1/2}, \\
\dom(\Lambda_0)=\Ran(I-K^*K)^{1/2}\bigr|_{\dom(A_0)},
\end{array}
\end{equation}
and
\begin{equation}
\label{L01}
\begin{array}{ll}
\Lambda_1=(I-KK^*)^{1/2}(A_1-B^*K^*)(I-KK^*)^{-1/2}, \\
\dom(\Lambda_1)=\Ran(I-KK^*)^{1/2}\bigr|_{\dom(A_1)},
\end{array}
\end{equation}
are self-adjoint operators on the corresponding  Hilbert space
components $\fH_0$ and $\fH_1$, respectively. \smallskip

\item[(ii)] The graph subspaces $\fH'_0=\cG(K)$ and $\fH'_1=\cG(K^*)$
are invariant under $L$, mutually orthogonal with respect to the
indefinite inner product \eqref{IpKs}, and
$$
\fK=\fH'_0[+]\fH'_1.
$$
The subspace $\fH'_0$ is  maximal uniformly positive, while $\fH'_1$
is maximal uniformly negative. The restrictions of $L$ onto $\fH'_0$
and $\fH'_1$ are $\fK$-unitary equivalent to the self-adjoint
operators $\Lambda_0$ and $\Lambda_1$, respectively.
\end{enumerate}
\end{theorem}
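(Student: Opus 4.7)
\smallskip
\noindent\textbf{Proof plan.} The idea is to use the uniformly contractive solution $K$ of \eqref{RicABB} as the angular operator that encodes a pair of $L$-invariant graph subspaces, and then to transport $L$ to block-diagonal form by an explicit $J$-unitary similarity. First, by Remark~\Ref{Rweak} the weak solution $K$ is automatically a strong solution, and Lemma~\Ref{Lgraph} together with Remark~\Ref{RKKs} gives that both graph subspaces $\fH'_0=\cG(K)$ and $\fH'_1=\cG(K^*)$ are invariant under $L$. Since $\|K\|=\|K^*\|<1$, Lemma~\Ref{Lmaxu} shows $\fH'_0$ is maximal uniformly positive and $\fH'_1$ maximal uniformly negative; Lemma~\Ref{Lspn} then yields the $J$-orthogonality and the decomposition $\fK=\fH'_0[+]\fH'_1$.

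Next, I would study the operator $T$ of \eqref{Ws}. The factor $\mathrm{diag}\bigl((I-K^*K)^{-1/2},(I-KK^*)^{-1/2}\bigr)$ is bounded and boundedly invertible because $\|K\|<1$ makes $I-K^*K$ and $I-KK^*$ strictly positive, and the off-diagonal prefactor $\bigl(\begin{smallmatrix}I&K^*\\K&I\end{smallmatrix}\bigr)$ admits an explicit bounded inverse via block Gaussian elimination. A direct $2\times 2$ block computation shows
\[
T^*JT=\begin{pmatrix}(I-K^*K)^{-1/2} & 0\\ 0 & (I-KK^*)^{-1/2}\end{pmatrix}\begin{pmatrix}I-K^*K & 0 \\ 0 & -(I-KK^*)\end{pmatrix}\begin{pmatrix}(I-K^*K)^{-1/2} & 0\\ 0 & (I-KK^*)^{-1/2}\end{pmatrix}=J,
\]
so $T$ is $J$-unitary. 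By inspection, $T$ sends $\fH_0\oplus\{0\}$ bijectively onto $\cG(K)$ and $\{0\}\oplus\fH_1$ bijectively onto $\cG(K^*)$. Since $L$ preserves these graphs, the operator $\Lambda:=T^{-1}LT$ is block-diagonal with respect to $\fH_0\oplus\fH_1$.

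The entries $\Lambda_0,\Lambda_1$ are then identified by a direct calculation. For $u_0\in\ran(I-K^*K)^{1/2}\bigr|_{\dom(A_0)}$, set $x_0=(I-K^*K)^{-1/2}u_0\in\dom(A_0)$, so that strongness of $K$ gives $Kx_0\in\dom(A_1)$; then
\[
L\binom{x_0}{Kx_0}=\binom{(A_0+BK)x_0}{-B^*x_0+A_1Kx_0}=\binom{(A_0+BK)x_0}{K(A_0+BK)x_0},
\]
where the second equality uses the Riccati identity $A_1K=KA_0+KBK+B^*$ applied on $\dom(A_0)$. Reading off the $T$-coefficient of this graph vector yields $\Lambda_0u_0=(I-K^*K)^{1/2}(A_0+BK)(I-K^*K)^{-1/2}u_0$ as in \eqref{L0p}; the same argument applied to $\cG(K^*)$, using that $K^*$ strongly solves \eqref{RicABB1}, gives \eqref{L01}. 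Because $L$ is $J$-self-adjoint and $T$ is $J$-unitary, $\Lambda$ is $J$-self-adjoint, i.e.\ $J\Lambda$ is self-adjoint; block-diagonality then forces each diagonal entry to be self-adjoint on its component, so $\Lambda$ itself is self-adjoint. Hence $L=T\Lambda T^{-1}$ is similar to a self-adjoint operator and $\spec(L)\subset\bbR$.

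For the $\fK$-unitary equivalence in (ii), the isometric embedding $U_0\colon\fH_0\to\cG(K)$, $u_0\mapsto T\binom{u_0}{0}$, carries the Hilbert inner product on $\fH_0$ to $[\cdot,\cdot]\bigr|_{\cG(K)}$ — this is precisely the identity $T^*JT=J$ restricted to the first diagonal block — and $U_0^{-1}(L\bigr|_{\cG(K)})U_0=\Lambda_0$ by construction; the analogous statement holds for $\Lambda_1$ and $\fH'_1$. The step that needs the most care is domain bookkeeping: verifying that $T$ maps $\dom(\Lambda)=\dom(\Lambda_0)\oplus\dom(\Lambda_1)$ exactly onto $\dom(L)$, which hinges on the strong-solution property $\ran(K|_{\dom(A_0)})\subset\dom(A_1)$ (and its adjoint counterpart for $K^*$) so that the Riccati manipulation above takes place in $\dom(L)$ rather than only weakly.
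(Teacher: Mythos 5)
The paper itself does not prove Theorem~\ref{Lss}; it is imported verbatim from \cite{AlMoSh}, Theorem~5.2, so there is no in-paper proof to compare against. Your plan follows the standard route behind that result (graph subspaces of the contractive Riccati solution, $J$-unitarity of $T$, the Riccati identity to read off the diagonal entries), and the computations you display are correct: $T^*JT=J$ does hold, $T$ does map $\fH_0\oplus\{0\}$ onto $\cG(K)$ and $\{0\}\oplus\fH_1$ onto $\cG(K^*)$, and the identification of $\Lambda_0$ and $\Lambda_1$ via $-B^*x_0+A_1Kx_0=K(A_0+BK)x_0$ is exactly right. Parts (ii) of the theorem (invariance, $J$-orthogonality, maximal uniform definiteness, $\fK=\fH_0'[+]\fH_1'$) are fully covered by your appeal to Remark~\ref{Rweak}, Remark~\ref{RKKs}, and Lemmas~\ref{Lgraph}, \ref{Lmaxu}, \ref{Lspn}.

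There is, however, one genuine gap, precisely at the step you flag at the end, and the fix you gesture at is not enough. The strong-solution property $\ran(K|_{\dom(A_0)})\subset\dom(A_1)$ (and its counterpart for $K^*$) yields only the inclusion $T\bigl(\dom(\Lambda_0)\oplus\dom(\Lambda_1)\bigr)\subset\dom(L)$, i.e.\ $M:=T\,\diag(\Lambda_0,\Lambda_1)\,T^{-1}\subset L$. It does not give the reverse inclusion, and the reverse inclusion is what both the similarity \eqref{TLam} and your self-adjointness argument rest on: if you define $\Lambda:=T^{-1}LT$ on $T^{-1}\dom(L)$, then $J\Lambda=T^*(JL)T$ is indeed self-adjoint, but you cannot conclude that this operator is block-diagonal with domain $\dom(\Lambda_0)\oplus\dom(\Lambda_1)$ without knowing $T^{-1}\dom(L)=\dom(\Lambda_0)\oplus\dom(\Lambda_1)$. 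A direct attempt to decompose $x\in\dom(L)$ along $\cG(K)[+]\cG(K^*)$ stalls because $(I-K^*K)^{-1}$ is not obviously $\dom(A_0)$-preserving. A clean repair is: first prove each $\Lambda_i$ self-adjoint on its own (the Riccati equation gives $(I-K^*K)(A_0+BK)=A_0+BK+(BK)^*-K^*A_1K$, so $\Lambda_0$ is symmetric, and $\Lambda_0-\la=(I-K^*K)^{1/2}(A_0+BK-\la)(I-K^*K)^{-1/2}$ is surjective for $|\Im\la|>\|BK\|$ in both half-planes, whence self-adjointness); then from $M\subset L$ pass to adjoints, $L^*\subset M^*$, and use $L^*=JLJ$ together with $M^*=(T^*)^{-1}\Lambda T^*=JMJ$ (which follows from $T^*JT=J$ and $J\Lambda J=\Lambda$) to get $L\subset M$, hence $L=M$. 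With that inserted, the rest of your argument goes through.
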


\begin{remark}
\label{RKcontr} The requirement $\|K\|<1$ is sharp in the sense that
if there is no uniformly contractive solution to the Riccati
equation \eqref{RicABB}, then the operator matrix $L$ need not be
similar to a self-adjoint operator at all; this can be seen, e.g.,
from \cite[Example 5.5]{AlMoSh}.
\end{remark}

An elementary consequence of Theorem \ref{Lss} is the following
property of maximal uniformly definite subspaces of $J$-self-adjoint
operators $L=A+V$ with self-adjoint $A$ and bounded $V$.

\begin{corollary}
\label{LCaux}
Assume that $L=A+V$ satisfies Assumption \ref{Hypo1}. Suppose that
$L$ has a maximal uniformly positive $($resp.\ negative$)$ invariant
subspace $\fK_0$ of $\fK=\{\fH,J\}$. Then $\fK_1=\fK_0^{[\perp]}$ is
also an invariant subspace of $\,L$, which is maximal uniformly
negative $($resp.\ positive$)$;
the restrictions of $\,L$ to $\fK_0$ and $\fK_1$ are $\fK$-unitary
equivalent to self-adjoint operators on the Hilbert spaces $\fH_0$
and $\fH_1$, respectively.
\end{corollary}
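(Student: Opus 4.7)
The plan is to reduce the corollary directly to Theorem~\ref{Lss} by using the graph representation of maximal uniformly definite subspaces provided by Lemma~\ref{Lmaxu}, together with the equivalence between invariance of a graph subspace and solvability of the associated Riccati equation given in Lemma~\ref{Lgraph}.

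First, suppose $\fK_0$ is maximal uniformly positive. By Lemma~\ref{Lmaxu} there exists a uniform contraction $K\in\cB(\fH_0,\fH_1)$, $\|K\|<1$, with $\fK_0=\cG(K)$. Since by hypothesis $\fK_0$ is invariant under $L$, and $K$ (being bounded and everywhere defined) trivially satisfies the range condition \eqref{ranric} when composed with the action of $L$ on $\dom(A_0)$ as in Lemma~\ref{Lgraph}, that lemma yields that $K$ is a strong solution of the Riccati equation \eqref{RicABB}. Thus the hypotheses of Theorem~\ref{Lss} are met. Part (ii) of Theorem~\ref{Lss} immediately gives that $\cG(K^*)$ is invariant under $L$, maximal uniformly negative, and $J$-orthogonal to $\fK_0$; by Lemma~\ref{Lspn} we have $\fK_0^{[\perp]}=\cG(K^*)$, so $\fK_1=\fK_0^{[\perp]}$ is the invariant maximal uniformly negative subspace claimed. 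Finally, the statement about $\fK$-unitary equivalence to self-adjoint operators on $\fH_0$ and $\fH_1$ is precisely the conclusion of Theorem~\ref{Lss}\,(ii), with $\Lambda_0$ and $\Lambda_1$ as in \eqref{L0p}, \eqref{L01}.

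Next, suppose $\fK_0$ is maximal uniformly \emph{negative}. By Lemma~\ref{Lmaxu} there is a uniform contraction $K'\in\cB(\fH_1,\fH_0)$ with $\fK_0=\cG(K')$. The invariance of $\fK_0$ under $L$, together with the second half of Lemma~\ref{Lgraph}, yields that $K'$ is a strong solution of \eqref{RicABB1}. By Remark~\ref{RKKs}, $K=(K')^*\in\cB(\fH_0,\fH_1)$ is then a strong solution of \eqref{RicABB} with $\|K\|=\|K'\|<1$. Applying Theorem~\ref{Lss} to this $K$ yields the maximal uniformly positive invariant subspace $\cG(K)$, which by Lemma~\ref{Lspn} coincides with $\fK_0^{[\perp]}$; the $\fK$-unitary equivalence of the restrictions of $L$ to self-adjoint operators on $\fH_0$ and $\fH_1$ again comes directly from Theorem~\ref{Lss}\,(ii).

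Since the argument consists entirely of locating the right graph representation and invoking the two previous results, there is no genuine technical obstacle; the only mild care is in handling the negative case, where one must pass through Remark~\ref{RKKs} to convert a solution of \eqref{RicABB1} into one of \eqref{RicABB} before Theorem~\ref{Lss} becomes applicable.
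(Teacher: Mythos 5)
Your proposal is correct and follows essentially the same route as the paper: Lemma~\ref{Lmaxu} gives the graph representation of $\fK_0$ by a uniform contraction, Lemma~\ref{Lgraph} converts invariance into a uniformly contractive strong solution of \eqref{RicABB}, and Theorem~\ref{Lss} delivers all the conclusions; the paper merely declares the negative case ``analogous'' where you spell it out via Remark~\ref{RKKs}. The only blemish is your parenthetical claim that the range condition \eqref{ranric} holds ``trivially'' because $K$ is bounded and everywhere defined --- boundedness alone does not place $\ran\bigl(K|_{\dom(A_0)}\bigr)$ inside $\dom(A_1)$ --- but the paper's own proof passes over this point in exactly the same way, so it does not change the comparison.
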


\begin{proof}
We give the proof for the case where $\fK_0$ is a maximal uniformly
positive subspace; the proof for maximal uniformly negative $\fK_0$
is analogous.

By Lemma \ref{Lmaxu}, $\fK_0$ is the graph of a uniform contraction
$K:\,\fH_0\to\fH_1$. Since $\fK_0$ is invariant under $L$, Lemma
\ref{Lgraph} shows that $K$ is a uniformly contractive strong
solution to the Riccati equation \eqref{RicABB}.
Now  all claims follow immediately from Theorem \ref{Lss}.
\end{proof}

Riccati equations are closely related to operator Sylvester
equations (also called Kato-Rosen\-blum equations). In this
paper we use the following well known result on sharp norm bounds
for strong solutions to operator Sylvester equations (cf.\
\cite[The\-o\-rem~4.9]{AlMoSh}).

\begin{theorem}
\label{TSylB} Let $A_0$ and $A_1$ be (possibly unbounded)
self-adjoint operators on the Hilbert spaces $\fH_0$ and $\fH_1$,
respectively, and $Y \in \cB(\fH_0, \fH_1)$. If the spectra
$\spec(A_0)$ and $\spec(A_1)$ are disjoint, i.e.\
$$d:=\dist\bigl(\spec(A_0),\spec(A_1)\bigr) > 0,$$
then the operator Sylvester equation
$$
XA_0-A_1X=Y
$$
has a unique strong solution $X\in\cB(\fH_0,\fH_1)$; the solution
$X$ satisfies the norm bound
\begin{equation}
\label{XBg} \|X\|\leq\frac{\pi}{2}\frac{\|Y\|}{d};
\end{equation}
if, in addition, one of the sets $\spec(A_0)$ and $\spec(A_1)$ lies
in a finite or infinite gap of the other one, then $X$ satisfies the
stronger norm bound
\begin{equation}
\label{XBs} \|X\|\leq\frac{\|Y\|}{d}.
\end{equation}
\end{theorem}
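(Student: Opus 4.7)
My plan is to reduce the Sylvester equation to a Schur-multiplier question via the spectral theorem and then handle the three assertions in order. After simultaneously diagonalising $A_0$ and $A_1$, the equation becomes $(\lambda - \mu)X(\mu,\lambda) = Y(\mu,\lambda)$ on $\spec(A_1)\times \spec(A_0)$, which forces the integral kernel of any strong solution to be $Y(\mu,\lambda)/(\lambda - \mu)$; so existence reduces to the boundedness of this double operator integral, and uniqueness follows from the standard intertwining argument: if $XA_0 = A_1 X$ with $X$ bounded, then the Borel functional calculus gives $E_1(\Delta)X = X E_0(\Delta)$ for all Borel $\Delta \subset \bbR$, and choosing $\Delta$ to separate $\spec(A_0)$ and $\spec(A_1)$ forces $X = 0$.

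The main obstacle is the sharp constant $\pi/2$ in the bound \eqref{XBg}; this is the Bhatia--Davis--McIntosh theorem on the Schur-multiplier norm of $(\lambda - \mu)^{-1}$ restricted to $|\lambda - \mu|\ge d$. I would invoke their Fourier approach: on this set one can exhibit a representation
$$\frac{1}{\lambda - \mu} = \int_{\bbR} h(s)\, e^{-is\mu}\, e^{is\lambda}\,ds$$
with $h \in L^1(\bbR)$ and $\|h\|_{L^1} = \pi/(2d)$; then
$$X = \int_{\bbR} h(s)\, e^{-isA_1}\, Y\, e^{isA_0}\,ds,$$
and the bound follows from the triangle inequality together with the unitarity of the two exponential groups, while sharpness is witnessed by a $2\times 2$ example.

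For the refined bound \eqref{XBs} in the nested-spectra case, I would first treat the subordinated (infinite-gap) subcase: after translating so that $A_1 \ge d/2$ and $A_0 \le -d/2$, the formula
$$X = -\int_0^\infty e^{-tA_1}\, Y\, e^{tA_0}\,dt$$
converges absolutely with $\|X\| \le \|Y\|\int_0^\infty e^{-td}\,dt = \|Y\|/d$, and a short integration by parts verifies the Sylvester equation. The finite-gap subcase, where $\spec(A_0) \subset (a, b)$ with $a, b \in \spec(A_1)$, is subtler: splitting $A_1$ along the gap yields two subordinated pieces, each contributing a summand $X^\pm$ of $X$, but a direct norm combination loses a factor of $\sqrt{2}$. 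Restoring the sharp constant requires either a single integral representation for $X$ that respects both halves of $\spec(A_1)$ simultaneously, or a refined choice of $h$ in the Fourier representation above tailored to the nested geometry so that $\|h\|_{L^1} = 1/d$; this last step is where I expect the argument to demand the most care.
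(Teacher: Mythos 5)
First, a point of reference: the paper does not prove Theorem \ref{TSylB} at all. It is quoted as a well-known result, with the constant $\pi/2$ in \eqref{XBg} attributed to Bhatia--Davis--McIntosh \cite{BDM1983} and McEachin \cite{McE93}, and the gap bound \eqref{XBs} traced back to Heinz \cite{H51} and to \cite[Theorem~4.9]{AlMoSh}. So there is no in-paper argument to compare against, and your proposal must stand on its own. Its architecture is the standard one and is largely sound: uniqueness via intertwining of the spectral measures is correct; the representation $X=\int_{\bbR} h(s)\,e^{-isA_1}Ye^{isA_0}\,ds$ with $\|h\|_{L^1}=\pi/(2d)$ is exactly the Bhatia--Davis--McIntosh route to \eqref{XBg}; and the semigroup integral $X=-\int_0^\infty e^{-tA_1}Ye^{tA_0}\,dt$ correctly settles the subordinated (infinite-gap) sub-case of \eqref{XBs}. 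One incidental error: a $2\times 2$ example cannot witness optimality of $\pi/2$ (in dimension two the disjointness alone already forces $\|X\|\le\|Y\|/d$); McEachin's lower bound needs matrices of growing dimension. Since the theorem does not assert optimality, this costs you nothing here.

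The genuine gap is the finite-gap sub-case of \eqref{XBs}, which you explicitly leave open. No refined Fourier multiplier is needed there; a short resolvent argument closes it. Suppose $\spec(A_0)$ lies in the finite gap $(\alpha,\beta)$ of $\spec(A_1)$, so that $\spec(A_0)\subset[\alpha+d,\beta-d]$ and $A_0$ is bounded. Translate both operators by the centre of the gap (the Sylvester equation is invariant under a common shift); with $c=(\beta-\alpha)/2$ this gives $0\in\rho(A_1)$, $\|A_1^{-1}\|\le 1/c$, and $\|A_0\|\le c-d$. Since a bounded strong solution $X$ is already known to exist (from your first part) and $\ran X\subset\dom(A_1)$, one may rewrite the equation as $X=A_1^{-1}(XA_0-Y)$ and estimate
$\|X\|\le\frac1c\bigl((c-d)\|X\|+\|Y\|\bigr)$,
which rearranges to $\|X\|\,d/c\le\|Y\|/c$, i.e.\ $\|X\|\le\|Y\|/d$; the case where $\spec(A_1)$ sits in a finite gap of $\spec(A_0)$ is symmetric. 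This is precisely the device the paper itself employs in the proof of Theorem \ref{Ttans} (see \eqref{LaLa} and the derivation of \eqref{kb3s}), so it also meshes naturally with the rest of the text.
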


\begin{remark}
The fact that the constant $\pi/2$ in the estimate \eqref{XBg} for
the generic disposition of the sets $\spec(A_0)$ and $\spec(A_1)$ is
best possible is due to R. McEachin \cite{McE93}. The existence of
the bound \eqref{XBs} for the particular case where one of the sets
$\spec(A_0)$ and $\spec(A_1)$ lies in a finite or infinite gap of
the other one may be traced back to E.\,Heinz \cite{H51} (also see
\cite[Theorem 3.2]{BDM1983} and \cite[Theorem 3.4]{AlMoSh}). For
more details and references we refer the reader to
\cite[Re\-mark~2.8]{AMM} and \cite[Re\-mark~4.10]{AlMoSh}.
\end{remark}

In the proofs of several statements below we will use the following
elementary result, the proof of which is left to the reader.

\begin{lemma}
\label{phiT} Let $\varphi$ be a scalar analytic function of a
complex variable $z$ whose Taylor~series
\begin{equation}
\label{phiTay} \varphi(z)=\sum\limits_{k=0}^\infty a_k z^k, \quad
a_k=\frac{1}{k!}\dfrac{d^k\varphi(0)}{dz^k}, \quad k=1,2,\ldots,
\end{equation}
is absolutely convergent on the open disc $\{z\in\bbC:|z|<r\}$ for
some $r>0$. Let  $M\in\cB(\fH_1,\fH_0)$ and $N\in\cB(\fH_0,\fH_1)$
be bounded operators with $\|MN\|<r$ and $\|NM\|<r$. Then
\begin{equation}
\label{MphiN} M\varphi(NM)=\varphi(MN)M,
\end{equation}
where for a bounded linear operator $T$ on a Hilbert space $\fT$
with $\|T\|<r$ the value of $\varphi(T)$ is defined by the series
$$
\varphi(T)=\sum\limits_{k=0}^\infty a_k T^k.
$$
\end{lemma}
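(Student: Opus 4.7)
The proof is essentially bookkeeping with a power series, so I would proceed as follows.

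First, I would establish the key algebraic identity
\[
M(NM)^k = (MN)^k M \qquad \text{for every } k \geq 0,
\]
by induction on $k$. The base case $k=0$ is trivial (both sides equal $M$), and the induction step is a one-line associativity computation:
\[
M(NM)^{k+1} = \bigl(M(NM)^k\bigr)(NM) = \bigl((MN)^k M\bigr)(NM) = (MN)^{k+1} M.
\]

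Next, I would observe that the series defining $\varphi(NM)$ and $\varphi(MN)$ converge in operator norm. Indeed, since $\|NM\| < r$ and $\|MN\| < r$ and the Taylor series \eqref{phiTay} of $\varphi$ is absolutely convergent on $\{|z|<r\}$, the series $\sum_k a_k (NM)^k$ and $\sum_k a_k (MN)^k$ are absolutely convergent in $\cB(\fH_0)$ and $\cB(\fH_1)$, respectively, by Weierstrass $M$-test-type domination with the scalar series $\sum_k |a_k| \|NM\|^k$ and $\sum_k |a_k|\|MN\|^k$.

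Finally, I would use the continuity of left-multiplication by the bounded operator $M$ to interchange $M$ with the infinite sum, and apply the algebraic identity term by term:
\[
M\varphi(NM) = M \sum_{k=0}^\infty a_k (NM)^k = \sum_{k=0}^\infty a_k\, M(NM)^k = \sum_{k=0}^\infty a_k\, (MN)^k M = \varphi(MN)\,M.
\]
There is no real obstacle here; the only step that requires a moment of care is the norm-convergence justification for swapping $M$ past the infinite sum, but this is immediate from boundedness of $M$ together with the absolute convergence noted above.
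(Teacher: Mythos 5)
Your proof is correct, and since the paper explicitly leaves the proof of Lemma~\ref{phiT} to the reader, the elementary argument you give --- the induction $M(NM)^k=(MN)^kM$, absolute norm convergence of the operator series from $\|MN\|,\|NM\|<r$, and continuity of left multiplication by $M$ to pass it through the sum --- is precisely the intended one. No gaps.
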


We also need the following auxiliary statement.

\begin{lemma}
\label{LRKK} Assume that Assumption \ref{Hypo1} holds and
suppose that the Riccati equation \eqref{RicABB} has a weak (and
hence strong) solution $K\in\cB(\fH_1,\fH_0)$ such that $\|K\|<1$.
Then
\begin{equation}
\label{DomKL}
\Ran\bigl(K\bigl|_{\Dom(\Lambda_0)}\bigr)\subset\Dom(\Lambda_1)
\end{equation}
and
\begin{equation}
\label{Ric-t2t}
K\Lambda_0y-\Lambda_1Ky=-(I-KK^*)^{1/2}B^*(I-K^*K)^{1/2}y\quad
\text{for all }y\in\Dom(\Lambda_0),
\end{equation}
where $\Lambda_0$ and $\Lambda_1$ are the self-adjoint operators
given by \eqref{L0p} and \eqref{L01}, respectively.
\end{lemma}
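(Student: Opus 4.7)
The plan is to reduce the claim to a direct computation made possible by a single intertwining identity between $K$ and the square roots $(I-K^*K)^{1/2}$, $(I-KK^*)^{1/2}$, plus the Riccati equation \eqref{RicABB}.

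\textbf{Step 1 (intertwining).} First I would apply Lemma~\ref{phiT} with the analytic function $\varphi(z)=(1-z)^{1/2}$ (absolutely convergent on $\{|z|<1\}$), and with $M=K^*$, $N=K$, for which $\|NM\|=\|KK^*\|<1$ and $\|MN\|=\|K^*K\|<1$. This yields
\[
K^*(I-KK^*)^{1/2}=(I-K^*K)^{1/2}K^*,
\]
and by taking adjoints
\begin{equation}
\label{LRKK-int}
K(I-K^*K)^{1/2}=(I-KK^*)^{1/2}K.
\end{equation}

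\textbf{Step 2 (range inclusion).} To establish \eqref{DomKL}, take $y\in\Dom(\Lambda_0)$. By \eqref{L0p} there exists $x\in\Dom(A_0)$ with $y=(I-K^*K)^{1/2}x$. Using \eqref{LRKK-int},
\[
Ky=K(I-K^*K)^{1/2}x=(I-KK^*)^{1/2}Kx.
\]
Since $K$ is a strong solution to \eqref{RicABB}, property \eqref{ranric} gives $Kx\in\Dom(A_1)$, and hence $Ky\in\Ran(I-KK^*)^{1/2}\bigr|_{\Dom(A_1)}=\Dom(\Lambda_1)$.

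\textbf{Step 3 (identity).} For the same $x$ and $y=(I-K^*K)^{1/2}x$, I would compute both sides of \eqref{Ric-t2t}. From the definition \eqref{L0p},
\[
\Lambda_0 y=(I-K^*K)^{1/2}(A_0+BK)x,
\]
so by \eqref{LRKK-int},
\[
K\Lambda_0 y=(I-KK^*)^{1/2}K(A_0+BK)x=(I-KK^*)^{1/2}(KA_0+KBK)x.
\]
From \eqref{L01} together with Step~2,
\[
\Lambda_1 Ky=\Lambda_1(I-KK^*)^{1/2}Kx=(I-KK^*)^{1/2}(A_1-B^*K^*)Kx=(I-KK^*)^{1/2}(A_1K-B^*K^*K)x.
\]
Subtracting and invoking the Riccati equation \eqref{RicABB}, i.e.\ $KA_0-A_1K+KBK=-B^*$, gives
\[
K\Lambda_0 y-\Lambda_1Ky=(I-KK^*)^{1/2}\bigl(-B^*+B^*K^*K\bigr)x=-(I-KK^*)^{1/2}B^*(I-K^*K)x.
\]
Finally, $(I-K^*K)x=(I-K^*K)^{1/2}y$, which yields exactly~\eqref{Ric-t2t}.

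The only delicate step is verifying the intertwining identity \eqref{LRKK-int}, since the operators $A_0,A_1$ are in general unbounded and their domains enter through \eqref{L0p}--\eqref{L01}; however, \eqref{LRKK-int} is a statement about bounded operators on all of $\fH_0$, $\fH_1$, and follows cleanly from Lemma~\ref{phiT}. Once that identity is in hand, the rest is bookkeeping based on the definitions of $\Lambda_0$, $\Lambda_1$ and the Riccati equation.
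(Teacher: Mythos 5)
Your proof is correct and uses essentially the same ingredients as the paper's: the intertwining identity $K(I-K^*K)^{1/2}=(I-KK^*)^{1/2}K$ from Lemma~\Ref{phiT}, the similarity relations \eqref{L0p}--\eqref{L01} defining $\Lambda_0,\Lambda_1$, and the Riccati equation itself. The paper merely organizes the computation slightly differently (first deriving $KZ_0x-Z_1Kx=-B^*(I-K^*K)x$ and then identifying the operator $X=(I-KK^*)^{1/2}K(I-K^*K)^{-1/2}$ with $K$), whereas you verify the identity directly; the domain bookkeeping in your Step~2 matches the paper's as well.
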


\begin{proof}
It is straightforward to verify that if $K$ is a strong solution to
the Riccati equation \eqref{RicABB}, then
\begin{equation}
\label{RKs} KZ_0x-Z_1Kx=-B^*(I-K^*K)x\quad \text{for all
}x\in\Dom(A_0).
\end{equation}
Since $K$ is assumed to be a uniform contraction, Theorem
\ref{Lss}\,(i) applies and yields
\begin{align}
\nonumber
&K(I-K^*K)^{-1/2}\Lambda_0(I-K^*K)^{1/2}x-(I-KK^*)^{-1/2}\Lambda_1
(I-KK^*)^{1/2}Kx\qquad\\
\label{ZLA}
&\qquad\qquad\qquad =-B^*(I-K^*K)x \qquad
\text{for all \ } x\in\dom(A_0).
\end{align}
Applying $(I-KK^*)^{1/2}$ from the left to both sides of \eqref{ZLA}
and choosing $x=(I-K^*K)^{-1/2}y$ with $y\in\Dom(\Lambda_0)$, we
arrive at the Sylvester equation
\begin{equation}
\label{XKSyl} X\Lambda_0y-\Lambda_1 Xy=Yy \quad \text{for all }
y\in\dom(\Lambda_0),
\end{equation}
where
\begin{align}
\label{XX}
X=&\ \, (I-KK^*)^{1/2}K(I-K^*K)^{-1/2},\qquad\\
\label{XY} Y=&-(I-KK^*)^{1/2}B^*(I-K^*K)^{1/2}.
\end{align}
Note that we have
$\Ran(I-K^*K)^{-1/2}\bigl|_{\Dom(\Lambda_0)}=\Dom(A_0)$ by
\eqref{L0p}, $\ran\bigl({K}|_{\dom(A_0)}\bigr)\subset\dom(A_1)$ by
\eqref{ranric}, and thus, by \eqref{L01},
\begin{equation}
\label{DomX}
\Ran\bigl(X\bigr|_{\dom(\Lambda_0)}\bigr)\subset\dom(\Lambda_1).
\end{equation}
Hence $X$ is a strong solution to the Sylvester equation
\eqref{XKSyl}.

Furthermore, the Taylor series \eqref{phiTay} of the function
$\varphi(z)=(1-z)^{1/2}$ is absolutely convergent on the disc
$\{z\in\bbC:|z|<1\}$. Since $\|K\|<1$, Lemma \ref{phiT} applies and
yields that $(I-KK^*)^{1/2}K=K(I-K^*K)^{1/2}$. Therefore, \eqref{XX}
simplifies to nothing but the identity
$X=K$.
Now the claims follow from the inclusion \eqref{DomX} and the
identities \eqref{XKSyl}, \eqref{XY}.
\end{proof}


\section{Bounds on uniformly contractive solutions to the Riccati equations}
\label{SecKbounds}

Assuming Assumption \ref{Hypo1}, in this section we prove several
norm bounds on uniformly contractive solutions $K$ to the Riccati
equation \eqref{RicABB} (provided such solutions exist). These
bounds are obtained under the hypothesis that either the
spectra of the operators $Z_0=A_0+BK$ and $A_1$ or the spectra of
$Z_0$ and $Z_1=A_1-B^*K^*$ are disjoint. Note that, by Theorem
\ref{Lss}\,(i), the assumption $\|K\|<1$ implies that the spectra of
$Z_0$ and $Z_1$ are both real, that is, $\spec(Z_0)\subset\bbR$ and
$\spec(Z_1)\subset\bbR$.

Throughout this section we use the following notations:
\begin{align}
\label{delZA}
\delta_{Z_0,A_1}:=&\dist\bigl(\spec(Z_0),\spec(A_1)\bigr), \\
\label{delZZ}
\delta_{Z_0,Z_1}:=&\dist\bigl(\spec(Z_0),\spec(Z_1)\bigr).
\end{align}

\subsection{Semi-a posteriori bounds}
\label{SecSAPB} First, we establish norm bounds on $K$ that only
contain the norm of $B$ and the distance $\delta_{Z_0,A_1}$.
Therefore, these bounds may be viewed as semi-a posteriori estimates
on $K$ since the set $\spec(Z_0)=\spec(\Lambda_0)$ corresponds to
the perturbed operator $L=A+V$ (see Theorem \ref{Lss}), while the
other set, $\spec(A_1)$, is part of the spectrum of the
unperturbed ope\-ra\-tor $A$.

\begin{theorem}
\label{Tmain} Assume that Assumption \ref{Hypo1} holds
and suppose that the Riccati equation \eqref{RicABB} has a weak (and
hence strong) solution $K\in\cB(\fH_1,\fH_0)$ such that $\|K\|<1$.
Then:
\begin{enumerate}
\item[(i)] If the spectra of the operators $Z_0=A_0+BK$, $\dom(Z_0)=\dom(A_0)$,
and of $A_1$ are disjoint, i.e.
\begin{equation}
\label{ddelta} \delta_{Z_0,A_1}>0,
\end{equation}
then the solution $K$ satisfies the inequality
\begin{equation}
\label{Kpid}
\|K\|\leq\frac{\pi}{2}\,\frac{\|B\|}{\quad\delta_{Z_0,A_1}}.
\end{equation}
\item[(ii)] If, in addition, one of the sets $\spec(Z_0)$ or $\spec(A_1)$
lies in a finite or infinite gap of the other one, i.e.
\begin{equation}
\label{Z0A1} \conv\bigl(\spec(Z_0)\bigr)\cap\spec(A_1)=\emptyset
\end{equation}
or
\begin{equation}
\label{A1Z0} \spec(Z_0)\cap\conv\bigl(\spec(A_1)\bigr)=\emptyset,
\end{equation}
then the solution $K$ satisfies the stronger inequality
\begin{equation}
\label{K1d} \|K\|\leq\frac{\|B\|}{\quad\delta_{Z_0,A_1}}.
\end{equation}
\end{enumerate}
\end{theorem}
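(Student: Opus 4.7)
The plan is to reduce the Riccati equation to a Sylvester equation between two self-adjoint operators, and then invoke Theorem~\ref{TSylB}. Since $K$ is a strong solution of \eqref{RicABB}, it satisfies
\begin{equation*}
KZ_0\, x - A_1 K\, x = -B^* x, \quad x \in \Dom(A_0),
\end{equation*}
with $Z_0 = A_0 + BK$, $\Dom(Z_0) = \Dom(A_0)$. Although $Z_0$ is not self-adjoint, Theorem~\ref{Lss}\,(i) ensures, via $\|K\|<1$, that $Z_0$ is similar to the self-adjoint operator $\Lambda_0 = S_0 Z_0 S_0^{-1}$ on $\fH_0$, where $S_0 := (I-K^*K)^{1/2}$; in particular $\spec(Z_0) = \spec(\Lambda_0) \subset \bbR$.

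The core step is to introduce $X := K S_0^{-1} \in \cB(\fH_0, \fH_1)$ and show that it is a strong solution of the Sylvester equation
\begin{equation*}
X \Lambda_0 - A_1 X = -B^* S_0^{-1}
\end{equation*}
between the self-adjoint operators $\Lambda_0$ (on $\fH_0$) and $A_1$ (on $\fH_1$). For any $y \in \Dom(\Lambda_0)$ one has $y = S_0 x$ with $x \in \Dom(A_0)$ by \eqref{L0p}, so $X y = K x \in \Dom(A_1)$ by \eqref{ranric}; the Sylvester identity then follows by substituting $x = S_0^{-1} y$ into the Riccati identity and using $Z_0 S_0^{-1} = S_0^{-1} \Lambda_0$. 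Since $\dist(\spec(\Lambda_0), \spec(A_1)) = \delta_{Z_0, A_1} > 0$, Theorem~\ref{TSylB} now delivers
\begin{equation*}
\|X\| \leq \frac{\pi}{2}\,\frac{\|B^* S_0^{-1}\|}{\delta_{Z_0, A_1}}.
\end{equation*}

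The remaining step is to convert this into a bound on $\|K\|$ itself. Since $K^*K$ commutes with $S_0$, the functional calculus gives $X^*X = K^*K(I-K^*K)^{-1}$, whose operator norm equals $\|K\|^2/(1-\|K\|^2)$ by spectral mapping; hence $\|X\| = \|K\|/\sqrt{1-\|K\|^2}$. Similarly, $\|S_0^{-1}\| = 1/\sqrt{1-\|K\|^2}$ gives $\|B^* S_0^{-1}\| \leq \|B\|/\sqrt{1-\|K\|^2}$. Substituting into the previous inequality and cancelling the positive common factor $\sqrt{1-\|K\|^2}$ yields exactly \eqref{Kpid}. For part~(ii), the hypothesis \eqref{Z0A1} or \eqref{A1Z0} transfers verbatim to a gap condition between $\spec(\Lambda_0)$ and $\spec(A_1)$, because these two spectra coincide by similarity; the stronger alternative \eqref{XBs} of Theorem~\ref{TSylB} then applies to $X$, and the same cancellation produces \eqref{K1d}.

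The main obstacle I anticipate is the bookkeeping around the (possibly unbounded) operator $A_0$: one must verify carefully that $X = K S_0^{-1}$ is genuinely a strong solution of the self-adjoint Sylvester equation, which rests on the description $\Dom(\Lambda_0) = S_0(\Dom(A_0))$ from \eqref{L0p} together with \eqref{ranric}. Once that is settled, the key quantitative point is the precise identity $\|X\| = \|K\|/\sqrt{1-\|K\|^2}$, which makes the cancellation with the matching $\sqrt{1-\|K\|^2}$ factor on the right-hand side of Theorem~\ref{TSylB} exact and delivers the desired bounds.
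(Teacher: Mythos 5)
Your proposal is correct and follows essentially the same route as the paper's own proof: both reduce the Riccati identity to the Sylvester equation $X\Lambda_0 - A_1X = -B^*(I-K^*K)^{-1/2}$ with $X = K(I-K^*K)^{-1/2}$, verify the domain inclusion via \eqref{L0p} and \eqref{ranric}, apply Theorem~\Ref{TSylB}, and cancel the common factor $\sqrt{1-\|K\|^2}$ using the exact identity $\|X\| = \|K\|/\sqrt{1-\|K\|^2}$. No substantive differences.
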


\begin{proof}
The assumption that $K$ is a strong solution to the Riccati equation
\eqref{RicABB} is equivalent to $\ran\bigl({K}|_{\dom(Z_0)}\bigr)=
\ran\bigl({K}|_{\dom(A_0)}\bigr)\subset\dom(A_1)$ and
\begin{equation}
\label{KZ} KZ_0x-A_1Kx=-B^*x\quad \text{for all \
}x\in\dom(A_0)=\dom(Z_0).
\end{equation}
Since $K$ is a uniform contraction, $\|K\|<1$, we can use Theorem
\ref{Lss}\,(i) to rewrite \eqref{KZ} as
\begin{equation}
\label{ZL} K(I-K^*K)^{-1/2}\Lambda_0(I-K^*K)^{1/2}x-A_1Kx=-B^*x\quad
\text{for all }x\in\dom(A_0),
\end{equation}
where $\Lambda_0$ is the self-adjoint operator defined by
\eqref{L0p}. If we choose $x=(I-K^*K)^{-1/2}y$ with
$y\in\dom(\Lambda_0) $, we can write \eqref{ZL} as
\begin{align}
\label{ZL1}
K(I-K^*K)^{-1/2}\Lambda_0y-A_1K(I-K^*K)^{-1/2}y=-B^*(I-K^*K)^{-1/2}y\\
\nonumber \qquad\text{for all \ }y\in\dom(\Lambda_0);
\end{align}
note that $\ran
K(I-K^*K)^{-1/2}\bigl|_{\dom(\Lambda_0)}\subset\dom(A_1)$ since
$\Ran(I-K^*K)^{1/2}\bigr|_{\dom(A_0)}=\dom(\Lambda_0)$
(see\eqref{L0p}) and $\ran\bigl(
K\bigl|_{\dom(A_0)}\bigr)\subset\dom(A_1)$. Equality \eqref{ZL1}
means that the operator
\begin{equation}
\label{XK} X=K(I-K^*K)^{-1/2}
\end{equation}
is a strong solution to the operator Sylvester equation
\begin{equation}
\label{XSyl} X\Lambda_0-A_1X=Y
\end{equation}
with $Y=-B^*(I-K^*K)^{-1/2}$. Obviously, for the norm of $Y$ we have
\begin{equation}
\label{YB} \|Y\|\leq\frac{\|B\|}{\sqrt{1-\|K\|^2}}.
\end{equation}
If $|K|=\sqrt{K^*K}$ denotes the modulus of $K$, then
the modulus $|X|=\sqrt{X^*X}$ of the operator $X$ defined in
\eqref{XK} is given by
$$
|X|=|K|(I-|K|^2)^{1/2}.
$$
Taking into account that $\bigl\||X|\bigr\|=\|X\|$ and
$\bigl\||K|\bigr\|=\|K\|$, the spectral theorem implies that
\begin{equation}
\label{Xnorm} \|X\|=\frac{\|K\|}{\sqrt{1-\|K\|^2}}.
\end{equation}
Due to the similarity \eqref{L0p} of the operators $\Lambda_0$ and
$Z_0$, we have $\spec(\Lambda_0)=\spec(Z_0)$ and hence,
by~\eqref{ddelta},
\begin{equation}
\label{delL}
\dist\bigl(\spec(\Lambda_0),\spec(A_1)\bigr)=\delta_{Z_0,A_1}.
\end{equation}
Applying Theorem \ref{TSylB} and using \eqref{YB} as well as
\eqref{Xnorm}, we readily arrive at
$$
\frac{\|K\|}{\sqrt{1-\|K\|^2}}=\|X\|\leq c
\,\frac{\|Y\|}{\delta_{Z_0,A_1}}\leq
c\,\frac{\|B\|}{\delta_{Z_0,A_1}\sqrt{1-\|K\|^2}}
$$
where $c=\pi/2$ in case (i) and $c=1$ in case (ii) so that, in both
cases,
$$
\|K\|\leq c \, \frac{\|B\|}{\delta_{Z_0,A_1}}. \vspace{-8.5mm}
$$
\end{proof}

\vspace{2mm}

\begin{remark}
In order to compete with the hypothesis $\|K\|<1$, the bounds
\eqref{Kpid} and \eqref{K1d} are of interest only if $\|B\|< 2
\,\delta_{Z_0,A_1} / \pi$ in case (i) and $\|B\|<\delta_{Z_0,A_1}$
in case (ii).
\end{remark}

\begin{remark}
\label{REx1} For all spectral dispositions such that \eqref{Z0A1} or
\eqref{A1Z0} holds and $\|B\|<\delta_{Z_0,A_1}$, the bound \eqref{K1d} is
sharp in the sense that given an arbitrary $\beta>0$ and arbitrary
$\delta>\beta$ one can always find $A$ and $V$ such that
$\|V\|=\beta$, $\delta_{Z_0,A_1}=\delta$, and $\|K\|=\beta/\delta$.
\end{remark}

The following examples illustrate the sharpness of \eqref{K1d} and
Remark \ref{REx1}.

\begin{example}
\label{Ex3} Let $\fH_0=\bbC^2$ and $\fH_1=\bbC$. Assume that
$b,d\in\bbR$ are such that $0\leq b<d/2$ and~let
$$
A_0=\left(\begin{array}{rr} -d & 0\\0 &d
\end{array}\right), \quad A_1=0, \quad B=\left(\begin{array}{c}
0 \\ b\end{array}\right).
$$
For this choice of $A_0$, $A_1$, and $B$, the Riccati equation
\eqref{RicABB} has a unique uniformly contractive solution of the
form $K=\bigl( 0 \ -\kappa \bigr)$ where $\kappa$ is given by
\begin{equation}
\label{K12} \kappa=\dfrac{b}{\frac{d}{2}+\sqrt{\frac{d^2}{4}-b^2}}.
\end{equation}
Hence
$$
Z_0=A_0+BK=\left(\begin{array}{cc} -d & 0\\0 &
\frac{d}{2}+\sqrt{\frac{d^2}{4}-b^2}
\end{array}\right)
$$
and the set $\spec(A_1)=\{0\}$ lies in the gap $(-d,
d/2+\sqrt{d^2/4-b^2})$ of $\spec(Z_0)$, so that \eqref{A1Z0} holds.
Altogether we have
\begin{equation}
\label{KsharpZ0A1-3isl} \|K\|=\frac{\|B\|}{\delta_{Z_0,A_1}},
\end{equation}
i.e.\ equality in \eqref{K1d}.
\end{example}

\begin{example}
\label{Ex1} Let $\fH_0=\bbC$ and $\fH_1=\bbC^2$. Assume that
$b,d\in\bbR$ are such that $0\leq b<d$ and set
$$
A_0=0,\quad A_1=\left(\begin{array}{rr} -d & 0\\0 &d
\end{array}\right), \quad B=\left(\begin{array}{lr}
\frac{b}{\sqrt{2}} & \frac{b}{\sqrt{2}}\end{array}\right).
$$
By inspection, one can verify that the $2\times 1$ matrix
$$
K=\left(\begin{array}{r}-\frac{b}{\sqrt{2}d}\\
\frac{b}{\sqrt{2}d}\end{array}\right)
$$
is a solution to the operator Riccati equation \eqref{RicABB}.
Clearly,
\begin{equation}
\label{BbK} \|B\|=b, \quad  \|K\|=\dfrac{b}{d}
\end{equation}
and
$$
Z_0=A_0+BK=0,\quad Z_1=A_1-B^*K^*=\left(\begin{array}{cc}
-d+\frac{b^2}{2d} & -\frac{b^2}{2d}\\
\frac{b^2}{2d}   &  \quad d-\frac{b^2}{2d}
\end{array}\right).
$$
Obviously, the set $\spec(Z_0)=\{0\}$ lies within the gap $(-d,d)$
of the set $\spec(A_1)=\{-d,d\}$. Furthermore, $\delta_{Z_0,A_1}=d$
and hence, by \eqref{BbK},
\begin{equation}
\label{KBdel} \|K\|=\dfrac{\|B\|}{\delta_{Z_0,A_1}}.
\end{equation}
For later reference, we note that
$\spec(Z_1)=\big\{-\sqrt{d^2-b^2},\sqrt{d^2-b^2}\big\}$ so that
$\delta_{Z_0,Z_1}=\sqrt{d^2-b^2}$ and thus
\begin{equation}
\label{sharp3} \|K\|=\frac{\|B\|}{\sqrt{\delta_{Z_0,Z_1}^2+b^2}}=
\frac{\|B\|}{\sqrt{\delta_{Z_0,Z_1}^2+\|B\|^2}}.
\end{equation}
\end{example}

\begin{example}
\label{Ex2} Let $\fH_0=\fH_1=\bbC$.
Assume that $b,d\in\bbR$ are such that $0< b<d/2$ and set
\[
  A_0=-d/2, \quad A_1=d/2, \quad B=b.
\]
Then the Riccati equation \eqref{RicABB} appears to be the numeric
quadratic equation $b K^2 + K d = -b$. The only solution $K=\kappa
\in \bbR$ with norm $\|K\|=|\kappa|<1$ where $\kappa$ is again given
by \eqref{K12}. One immediately verifies that
\begin{equation}
\label{Ex2Z01} Z_0=A_0+BK=-\sqrt{\frac{d^2}{4}-b^2},\quad
Z_1=A_1-B^*K^*=\sqrt{\frac{d^2}{4}-b^2}.
\end{equation}
Here the sets $\spec(Z_0)$ and $\spec(A_1)$ are even subordinated to
each other, so that both \eqref{Z0A1} and \eqref{A1Z0} hold.
Together with \eqref{K12} and \eqref{Ex2Z01}, we again obtain the
equality
\begin{equation}
\label{KsharpZ0A1} \|K\|=\frac{\|B\|}{\delta_{Z_0,A_1}}.
\end{equation}
For later reference, we also observe that
\begin{equation}
\label{Ksharp-t2t}
\frac{\|K\|}{1-\|K\|^2}=\frac{\|B\|}{\delta_{Z_0,Z_1}}.
\end{equation}
\end{example}

\subsection{Completely a posteriori bounds}
\label{SecCAPB} In this subsection we consider the case where the
spectra of the operators $Z_0=A_0+BK$ and $Z_1=A_1-B^*K^*$ are
disjoint. The bounds on $K$ obtained here depend only on
$\|B\|$ and on the distance $\delta_{Z_0,Z_1}$ between the
subsets $\spec(Z_0)=\spec(\Lambda_0)$ and
$\spec(Z_1)=\spec(\Lambda_1)$ of the spectrum of the perturbed
operator $L=A+V$ (see The\-o\-rem~\ref{Lss}). Therefore, they may be
viewed as a posteriori bounds on $K$.

We begin with the most general result where nothing is known on the
mutual position of $\spec(Z_0)$ and $\spec(Z_1)$ except that they do not intersect.

\begin{theorem}
\label{Ttanp} Assume that $L=A+V$ satisfies Assumption \ref{Hypo1}
and suppose that the Riccati equation \eqref{RicABB} has a weak (and
hence strong) solution $K\in\cB(\fH_1,\fH_0)$ such that $\|K\|<1$.
If the~spectra of the operators $Z_0=A_0+BK$, $\dom(Z_0)=\dom(A_1)$,
and $Z_1=A_1-B^*K^*$, $\dom(Z_1)=\dom(A_1),$ do not intersect, that
is,
$$\delta_{Z_0,Z_1}>0,$$
then
\begin{equation}
\label{AKpid} \|K\|\leq\frac{\pi}{2}\frac{\|B\|}{\delta_{Z_0,Z_1}}.
\end{equation}
\end{theorem}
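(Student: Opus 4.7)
The plan is to reduce Theorem \ref{Ttanp} to the self-adjoint Sylvester bound in Theorem \ref{TSylB}, using the diagonal self-adjoint representatives $\Lambda_0,\Lambda_1$ of $Z_0,Z_1$ provided by Theorem \ref{Lss}. The key observation is that Lemma \ref{LRKK} already rewrites the Riccati equation as a \emph{self-adjoint} Sylvester equation for $K$, which is precisely the setting in which Theorem \ref{TSylB} applies.

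Concretely, I would proceed as follows. First, since $\|K\|<1$, Theorem \ref{Lss}\,(i) is applicable and produces the self-adjoint operators $\Lambda_0,\Lambda_1$ defined by \eqref{L0p}, \eqref{L01}, with $\spec(\Lambda_0)=\spec(Z_0)$ and $\spec(\Lambda_1)=\spec(Z_1)$. Next, Lemma \ref{LRKK} yields both the domain inclusion $\Ran(K|_{\Dom(\Lambda_0)})\subset\Dom(\Lambda_1)$ and the identity
\begin{equation*}
K\Lambda_0 y-\Lambda_1 K y = Yy \quad\text{for all } y\in\Dom(\Lambda_0),
\end{equation*}
where
\begin{equation*}
Y=-(I-KK^*)^{1/2}B^*(I-K^*K)^{1/2}.
\end{equation*}
Hence $K\in\cB(\fH_0,\fH_1)$ is a strong solution to the operator Sylvester equation $X\Lambda_0-\Lambda_1 X=Y$ with self-adjoint $\Lambda_0,\Lambda_1$ whose spectra satisfy $\dist(\spec(\Lambda_0),\spec(\Lambda_1))=\delta_{Z_0,Z_1}>0$.

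Applying Theorem \ref{TSylB} therefore gives
\begin{equation*}
\|K\|\leq \frac{\pi}{2}\frac{\|Y\|}{\delta_{Z_0,Z_1}}.
\end{equation*}
It remains to bound $\|Y\|$. Since $\|K\|<1$, the operators $(I-KK^*)^{1/2}$ and $(I-K^*K)^{1/2}$ are contractions (their norms are at most $1$), so
\begin{equation*}
\|Y\|\leq \bigl\|(I-KK^*)^{1/2}\bigr\|\,\|B^*\|\,\bigl\|(I-K^*K)^{1/2}\bigr\|\leq \|B\|,
\end{equation*}
and the claimed inequality \eqref{AKpid} follows at once.

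I do not expect any significant obstacle: Lemma \ref{LRKK} carries all the delicate bookkeeping (the ``square root commutation'' via Lemma \ref{phiT} and the domain inclusions), and it is exactly tailored so that the nontrivial Riccati cross-term $KBK$ is absorbed into the similarity to $(\Lambda_0,\Lambda_1)$, leaving a genuine self-adjoint Sylvester equation. The only mildly subtle point is recognising that the right-hand side $Y$ picks up \emph{both} factors $(I-KK^*)^{1/2}$ and $(I-K^*K)^{1/2}$, which are harmless contractions and thus do not deteriorate the constant $\pi/2$; in particular, no assumption beyond $\|K\|<1$ and $\delta_{Z_0,Z_1}>0$ is needed, matching the statement of the theorem.
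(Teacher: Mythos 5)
Your proposal is correct and follows essentially the same route as the paper: invoke Theorem \ref{Lss} and Lemma \ref{LRKK} to recast the Riccati equation as the self-adjoint Sylvester equation \eqref{Ric-t2t} for $K$ with coefficients $\Lambda_0,\Lambda_1$, note that $\|Y\|\le\|B\|$ because the square-root factors are contractions and that $\dist(\spec(\Lambda_0),\spec(\Lambda_1))=\delta_{Z_0,Z_1}$, and then apply the bound \eqref{XBg} of Theorem \ref{TSylB}. No gaps.
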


\begin{proof}
By Lemma \ref{LRKK}, the Riccati equation \eqref{RicABB} can be
written in the form \eqref{Ric-t2t}. For the term
$Y=-(I-KK^*)^{1/2}B^*(I-K^*K)^{1/2}$ on the right-hand side of
\eqref{Ric-t2t}, we have $\|Y\|\leq \|B\|$ since both $K^*K$ and
$KK^*$ are non-negative and, in addition, $\|K\|\|K^*\|<1$. Since
$\spec(\Lambda_0)=\spec(Z_0)$ and $\spec(\Lambda_1)=\spec(Z_1)$, we
have
$\dist\bigl(\spec(\Lambda_0),\spec(\Lambda_1)\bigr)=\delta_{Z_0,Z_1}$.
To complete the proof, it remains to apply the bound \eqref{XBg}
from Theorem \ref{TSylB} to \eqref{Ric-t2t}.
\end{proof}

\begin{remark}
Under the stronger assumption that one of the spectral sets
$\spec(Z_0)$ and $\spec(Z_1)$ lies in a finite or infinite gap of
the other one, i.e.\ if
\begin{equation*}
\conv \bigl(\spec(Z_i)\bigr) \cap \spec(Z_{1-i})=\emptyset \text{\,
for some \,}i=0,1,
\end{equation*}
Theorem \ref{TSylB} also yields the estimate
\begin{equation}
\label{AK1del} \|K\|\leq\frac{\|B\|}{\delta_{Z_0,Z_1}}.
\end{equation}
This estimate, however, is of no interest in the case where the
corresponding operator $Z_i$ is bounded: the bound
\eqref{KZconv} in the following theorem is stronger than \eqref{AK1del}.
\end{remark}

\begin{theorem}
\label{Ttans} Assume that $L=A+V$ satisfies Assumption \ref{Hypo1}
and suppose, in addition, that $A_0$ is bounded. Let the Riccati
equation \eqref{RicABB} have a weak (and hence strong) solution
$K\in\cB(\fH_1,\fH_0)$ such that $\|K\|<1$. If the spectrum of the
(bounded) operator $Z_0=A_0+BK$ lies in a finite or infinite gap of
the spectrum of the operator $Z_1=A_1-B^*K^*$,
$\dom(Z_1)=\Dom(A_1)$, that is,
\begin{equation}
\label{Z01conv} \conv\bigl(\spec(Z_0)\bigr)\cap\spec(Z_1)=\emptyset,
\end{equation}
then
\begin{equation}
\label{KZconv}
\|K\|\leq\frac{\|B\|}{\sqrt{\delta_{Z_0,Z_1}^2+\|B\|^2}}.
\end{equation}
\end{theorem}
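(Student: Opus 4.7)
The plan is to convert the Riccati equation for $K$ into a Sylvester equation with self-adjoint coefficients via Lemma~\ref{LRKK}, then apply the sharp Sylvester bound~\eqref{XBs} of Theorem~\ref{TSylB}, and finally combine with a structural inequality tailored to the Riccati structure to produce a quadratic inequality in $\|K\|$.

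Since $A_0$ is bounded by hypothesis, so is $Z_0=A_0+BK$; hence the self-adjoint operator $\Lambda_0$ from~\eqref{L0p}, being similar to $Z_0$, is also bounded, and $\spec(\Lambda_0)=\spec(Z_0)$ is a bounded subset of $\bbR$. Combined with $\spec(\Lambda_1)=\spec(Z_1)$, hypothesis~\eqref{Z01conv} becomes $\conv(\spec(\Lambda_0))\cap\spec(\Lambda_1)=\emptyset$, so $\spec(\Lambda_0)$ lies in a finite gap of $\spec(\Lambda_1)$. By Lemma~\ref{LRKK}, $K$ satisfies
\[
K\Lambda_0-\Lambda_1K=-(I-KK^*)^{1/2}B^*(I-K^*K)^{1/2},
\]
with self-adjoint $\Lambda_0,\Lambda_1$, and the sharp Sylvester estimate~\eqref{XBs} of Theorem~\ref{TSylB} yields
\[
\|K\|\le\frac{\|(I-KK^*)^{1/2}B^*(I-K^*K)^{1/2}\|}{\delta_{Z_0,Z_1}}.
\]

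It remains to establish the structural inequality
\[
\|(I-KK^*)^{1/2}B^*(I-K^*K)^{1/2}\|^{2}\le\|B\|^{2}(1-\|K\|^{2}),
\]
since squaring the Sylvester estimate and substituting this bound gives $\|K\|^{2}\delta_{Z_0,Z_1}^{2}\le\|B\|^{2}(1-\|K\|^{2})$, which rearranges to $\|K\|^{2}(\delta_{Z_0,Z_1}^{2}+\|B\|^{2})\le\|B\|^{2}$, i.e.\ the desired bound~\eqref{KZconv}. The main obstacle is precisely this structural inequality: since $\|(I-KK^*)^{1/2}\|$ and $\|(I-K^*K)^{1/2}\|$ are only bounded by $1$ in general, the factor $1-\|K\|^{2}$ cannot come from a routine sub-multiplicative estimate, and the Riccati equation itself must enter the argument. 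A natural route is to substitute the equivalent Riccati identity $B^*(I-K^*K)=Z_1K-KZ_0$ (a rearrangement of~\eqref{RicABB}) into the square $Y_1^{*}Y_1=(I-K^*K)^{1/2}B(I-KK^*)B^*(I-K^*K)^{1/2}$ and then exploit the similarities $Z_i\sim\Lambda_i$ from~\eqref{L0p}--\eqref{L01} to bootstrap from the Sylvester estimate itself. Example~\ref{Ex1}, in which $BK=0$ forces $\|(I-KK^*)^{1/2}B^*(I-K^*K)^{1/2}\|=\|B\|\sqrt{1-\|K\|^{2}}$ with equality throughout, confirms both the sharpness of the structural inequality and the optimality of the resulting bound~\eqref{KZconv}.
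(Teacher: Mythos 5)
Your reduction to the Sylvester equation \eqref{Ric-t2t} via Lemma~\ref{LRKK} and the application of the sharp bound \eqref{XBs} of Theorem~\ref{TSylB} are correct and coincide with the opening of the paper's argument; they give $\|K\|\,\delta_{Z_0,Z_1}\le\|Y\|$ with $Y=-(I-KK^*)^{1/2}B^*(I-K^*K)^{1/2}$. The genuine gap is exactly where you locate it: the ``structural inequality'' $\|Y\|\le\|B\|\sqrt{1-\|K\|^2}$ is never proved. It is not a general operator inequality --- for generic contractions $K$ and operators $B$ one only has $\|Y\|\le\|B\|$, and $\|Y\|$ can be as large as $\|B\|$ with $\|K\|>0$ whenever $B^*$ attains its norm on vectors orthogonal to the maximizing directions of $|K|$ and maps them away from those of $|K^*|$ --- so it could only follow from the Riccati equation itself, and the sentence about substituting $B^*(I-K^*K)=Z_1K-KZ_0$ into $Y^*Y$ and ``bootstrapping'' is a plan, not an argument. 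Note also that Example~\ref{Ex1} only exhibits a case of equality; it does not certify the inequality in general, and it is even conceivable that the inequality fails in situations where \eqref{KZconv} still holds because the Sylvester bound is not saturated.

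The paper avoids this obstacle by not passing through an operator-norm bound on $Y$ at all. After centering the gap so that $0\in\rho(\Lambda_1)$, $\|\Lambda_1^{-1}\|<1/a$ and $\|\Lambda_0\|\le a-\delta_{Z_0,Z_1}$ (see \eqref{LaLa}), it inverts $\Lambda_1$ in \eqref{Ric-t2t} to obtain \eqref{Ric-Bt} and applies both sides to an eigenvector, or more generally to a singular sequence $\{x_n\}$ of $|K|$ at $\kappa=\|K\|$ as in \eqref{xn}. On such vectors $(I-|K|^2)^{1/2}x_n\to\sqrt{1-\kappa^2}\,x_n$, so the factor $\sqrt{1-\kappa^2}$ is extracted pointwise from only one of the two square roots, while the other is simply bounded by $1$; this yields $\kappa\le\frac1a\bigl(\kappa(a-\delta_{Z_0,Z_1})+\sqrt{1-\kappa^2}\,\|B\|\bigr)$, i.e.\ $\kappa\,\delta_{Z_0,Z_1}\le\sqrt{1-\kappa^2}\,\|B\|$, which is \eqref{KZconv}. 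If you want to salvage your route, you would have to run this localization argument anyway to control $\|Yx_n\|$ on the relevant vectors, at which point Theorem~\ref{TSylB} becomes superfluous; as it stands, the proposal is incomplete.
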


\begin{proof}
Throughout the proof we assume that $B\neq0$ and, thus, necessarily
\begin{equation}
\label{Kne0}
K\neq 0.
\end{equation}
Let $U$ be the partial isometry in the polar
decomposition $K=U|K|$ of $K$. If we adopt the convention that $U$
is extended to $\Ker(K)=\Ker(|K|)$ by
\begin{equation}
\label{conviso} U|_{\Ker(K)}=0,
\end{equation}
then $U$ is uniquely defined on the whole space $\fH_0$
(see \cite[Theorem 8.1.2]{BirSol} or \cite[\S VI.7.2]{K})~and
\begin{equation}
\label{Kisom} U \text{ \ is an isometry on \ } \Ran(|K|)=\Ran(K^*).
\end{equation}

First we apply Lemma \ref{LRKK} and transform the Riccati equation
\eqref{RicABB} to the form \eqref{Ric-t2t}. Since the operator
$\Lambda_0$ is bounded, $\Lambda_0\in\cB(\fH_0)$, we may then
rewrite \eqref{Ric-t2t} as
\begin{equation}
\label{Ric-3i}
K\Lambda_0-\Lambda_1U|K|=-(I-KK^*)^{1/2}B^*(I-|K|^2)^{1/2} = -
{\widetilde{B}}^*(I-|K|^2)^{1/2}
\end{equation}
where we have set
\begin{equation}
\label{Bt} \widetilde{B}=B(I-KK^*)^{1/2}.
\end{equation}

We tackle the cases where the gap of $\spec(\Lambda_1)$ containing
the set $\spec(\Lambda_0)$ is finite or infinite in a slightly
different manner. If this gap is finite, we may assume without loss
of generality that it is centered at zero, i.e.\ it is of the form
$(-a,a)$ with $a>0$; otherwise, we simply replace $\Lambda_0$ and
$\Lambda_1$ in \eqref{Ric-3i} by $\Lambda'_0=\Lambda_0-\lambda_1 I$
and $\Lambda'_1=\Lambda_1-\lambda_1 I$, respectively, where
$\lambda_1$ is the center of the gap. Then
\begin{equation}
\label{LaLa} 0\in \rho(\Lambda_1), \quad
\|\Lambda_1^{-1}\|<\frac{1}{a},\text{\, and \,}\|\Lambda_0\|\leq
a-\delta_{Z_0,Z_1}.
\end{equation}
If the gap of $\spec(\Lambda_1)$ containing the $\spec(\Lambda_0)$
is infinite, we may assume without loss of generality that the
interval $[\min\spec(\Lambda_0),\max\spec(\Lambda_0)]$ is centered
at zero and that the spectrum of $\Lambda_1$ lies either in the
interval $(-\infty,-a]$ where $a=-\max\spec(\Lambda_1)$ or in the
interval $[a,\infty)$ where $a=\min\spec(\Lambda_1)$. Then, again,
all three statements of \eqref{LaLa} hold.

In the following, we may thus treat the two above cases together.
Since $0\not\in\spec(\Lambda_1)$, we further rewrite \eqref{Ric-3i}
in the form
\begin{equation}
\label{Ric-Bt}
U|K|=\Lambda_1^{-1}\left(K\Lambda_0+{\widetilde{B}}^*(I-|K|^2)^{1/2}\right)
\end{equation}
and set $\kappa=\max\,\spec(|K|)=\|K\|$. By assumption
\eqref{Kne0}, we have $\kappa>0$.

If $\kappa$ is an eigenvalue of $|K|$ and $x$ a corresponding
eigenvector with $\|x\|=1$, then, by applying both sides of
\eqref{Ric-Bt} to $x$, we immediately arrive at
\begin{equation}
\label{x3is} \kappa\,Ux=\Lambda_1^{-1} \bigl( K\Lambda_0
x+\sqrt{I-\kappa^2}\,\widetilde{B}^*x \bigr).
\end{equation}
By \eqref{Bt}, we have $\|\widetilde{B}^*x\|\le \|\widetilde{B}^*\|
\le \|B\| \|(I-KK^*)^{1/2}\| \leq\|B\|$. Then, by \eqref{LaLa} and
\eqref{x3is}, we obtain
\begin{equation}
\label{kb3s} \kappa\leq\frac{1}{a}\bigl(\kappa(a-\delta_{Z_0,Z_1})+
\sqrt{1-\kappa^2}\,\|B\|\bigr),
\end{equation}
taking into account that $x\in\Ran(|K|)$ and thus $\|Ux\|=\|x\|=1$
by \eqref{Kisom}.

If $\kappa$ is not an eigenvalue of $|K|$, then it belongs to the
essential spectrum of $|K|$.
Hence we obtain a singular sequence
$\{x_n\}_{n=1}^\infty$ of $|K|$ at $\kappa$ by choosing arbitrary
\begin{equation}
\label{xn} x_n\in
\Ran\sE_{|K|}\bigl((\kappa(1-1/n),\kappa]\bigr),\quad \|x_n\|=1,
\quad n=1,2,\ldots;
\end{equation}
here $\sE_{|K|}$ denotes the spectral measure of $|K|$ and, at the
same time, the (right-continuous) spectral function of $|K|$, that
is, $\sE_{|K|}(\mu)=\sE_{|K|}\bigl((-\infty,\mu]\bigr)$. Obviously,
$|K|x_n=\kappa x_n +\varepsilon_n$ with
\begin{equation}
\label{epsn}
\|\varepsilon_n\|=\biggl\|\int_{\kappa\left(1-\frac{1}{n}
\right)}^\kappa
d\sE_{|K|}(\mu)(\mu-\kappa)x_n\biggr\|\leq\frac{1}{n}.
\end{equation}
Similarly, $(I-|K|^2)^{1/2}x_n=\sqrt{1-\kappa^2}\,x_n+\zeta_n$ with
\begin{align}
\nonumber \|\zeta_n\|&=\biggl\|\int_{\kappa\left(1-\frac{1}{n}
\right)}^\kappa
d\sE_{|K|}(\mu)\bigl(\sqrt{1-\mu^2}-\sqrt{1-\kappa^2}\bigr)x_n\biggr\|\\
\nonumber &\leq\sup\limits_{\mu\in\left(\kappa\left(1-{1}/{n}
\right),\kappa\right]}\bigl|\sqrt{1-\mu^2}-\sqrt{1-\kappa^2}\bigr|\\
\label{zetn}
&=\left(1-\kappa^2\left(1-\frac{1}{n}\right)^2\right)^{1/2}
-(1-\kappa^2)^{1/2} \ <\
\frac{1}{n}\,\frac{\kappa^2}{\sqrt{1-\kappa^2}}.
\end{align}

Applying both sides of equality \eqref{Ric-Bt} to $x_n$, we arrive
at
\begin{equation}
\label{kb3sa} \kappa \,Ux_n=\Lambda_1^{-1} \bigl( K\Lambda_0 x_n+
\sqrt{I-\kappa^2}\,\widetilde{B}^*x_n \bigr) +\beta_n
\end{equation}
with
\begin{equation}
\label{betan}
\beta_n=\Lambda_1^{-1}\widetilde{B}^*\zeta_n-U\epsilon_n \to 0
\text{\, as \,} n\to\infty;
\end{equation}
here we have used that $\epsilon_n=|K|x_n-\kappa x_n \to 0$ and
$\zeta_n=(I-|K|^2)^{1/2}x_n-\sqrt{1-\kappa^2}x_n \to 0$ as
$n\to\infty$ by \eqref{epsn} and \eqref{zetn}, respectively. Since
$x_n \in \Ran(|K|)$ and thus $\|Ux_n\|=\|x_n\|=1$ by \eqref{Kisom}, the
relation \eqref{kb3sa} implies that
\begin{equation}
\label{kb3sb} \kappa\leq\frac{1}{a}\bigl(\kappa(a-\delta_{Z_0,Z_1})+
\sqrt{1-\kappa^2}\|B\|\bigr)+\|\beta_n\|, \quad n=1,2,\ldots,\infty,
\end{equation}
which, by \eqref{betan}, turns into the bound \eqref{kb3s} after
taking the limit $n\to\infty$. Solving inequality~\eqref{kb3s} for
$\kappa$ and recalling that $\kappa=\|K\|$, we conclude the estimate
\eqref{KZconv}.
\end{proof}

\begin{remark}
\label{REx4} The bound \eqref{KZconv} is sharp. In fact, equality
\eqref{sharp3} in Example \ref{Ex1} shows that, for the spectral
dispositions \eqref{Z01conv} where $\spec(Z_0)$ lies in a finite gap
of $\spec(Z_1)$, equality prevails in  \eqref{KZconv}.
\end{remark}

The strongest a posteriori bound for the solution $K$ is
obtained under the assumptions that the spectra of $Z_0$ and $Z_1$
are subordinated, i.e.
\begin{equation}
\label{Z0Z1subord} \max\,\spec(Z_0)<\min\,\spec(Z_1) \ \text{\, or \
} \max\,\spec(Z_1)<\min\,\spec(Z_0),
\end{equation}
and that $A_0$ and
$A_1$ are bounded.

\begin{theorem}
\label{Ttan2T} Assume that $L=A+V$ satisfies Assumption \ref{Hypo1}
and suppose, in addition, that the operators $A_0$, $A_1$ (and hence
$A$ and $L$) are bounded. Let the Riccati equation \eqref{RicABB}
have a weak (and hence strong) solution $K\in\cB(\fH_1,\fH_0)$ such
that $\|K\|<1$.  If the spectra of the operators $Z_0=A_0+BK$ and
$Z_1=A_1-B^*K^*$ are subordinated, that is,
\begin{equation}
\label{Z01subord} \max\,\spec(Z_0)<\min\,\spec(Z_1) \ \text{\, or \
} \max\,\spec(Z_1)<\min\,\spec(Z_0),
\end{equation}
then
\begin{equation}
\label{AK1ds}
\|K\|\leq\tan\left(\frac{1}{2}\arctan\frac{2\|B\|}{\,\delta_{Z_0,Z_1}}\right).
\end{equation}
\end{theorem}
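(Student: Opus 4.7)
The plan is to reduce the Riccati equation to a Sylvester-type identity by means of Lemma \ref{LRKK}, exploit the subordination hypothesis, and extract a quadratic scalar inequality on $\|K\|$. Concretely, Lemma \ref{LRKK} transforms \eqref{RicABB} into
\begin{equation*}
K\Lambda_0 - \Lambda_1 K = -(I-KK^*)^{1/2} B^* (I-K^*K)^{1/2},
\end{equation*}
where $\Lambda_0, \Lambda_1$ are the bounded self-adjoint operators supplied by Theorem \ref{Lss}, with $\spec(\Lambda_i) = \spec(Z_i)$ for $i=0,1$. After a harmless common shift $\Lambda_i \mapsto \Lambda_i - cI$ by the midpoint $c$ of the gap between the two spectra, I may assume $\Lambda_0 \leq -\delta/2$ and $\Lambda_1 \geq \delta/2$ as self-adjoint operators, where $\delta:=\delta_{Z_0,Z_1}$; the opposite subordination is reduced to this one by replacing $K$ with $K^*$ as in Remark \ref{RKKs}.

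Next I would use the polar decomposition $K = U|K|$, with $U$ extended to $\Ker(K)$ by zero as in \eqref{conviso}, so that $U^*U|K| = |K|$ and $\|Ux\| = \|x\|$ whenever $x \in \Ran(|K|)$. Set $\kappa = \|K\| = \max\spec(|K|)$ (the case $K=0$ being trivial). For an eigenvector $x$ of $|K|$ at $\kappa$ with $\|x\|=1$ (or, if $\kappa$ lies in the essential spectrum, for an approximate eigenvector from a singular sequence), one has $K^*Ux = |K|U^*U x = \kappa x$, $(I-K^*K)^{1/2}x = \sqrt{1-\kappa^2}\,x$, and $(I-KK^*)^{1/2}Ux = \sqrt{1-\kappa^2}\,Ux$. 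Pairing the transformed Riccati identity applied to $x$ with $Ux$ and extracting the common factor $\kappa$ then yields the scalar equation
\begin{equation*}
\kappa\bigl(\langle \Lambda_0 x, x\rangle - \langle \Lambda_1 Ux, Ux\rangle\bigr) = -(1-\kappa^2)\langle B^* x, Ux\rangle.
\end{equation*}

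At this point the subordination pays off in full: the two operator inequalities for $\Lambda_0$ and $\Lambda_1$ jointly bound the left-hand side from above by $-\kappa\delta$, and Cauchy-Schwarz bounds the absolute value of the right-hand side by $(1-\kappa^2)\|B\|$. Combining these estimates gives $\kappa\delta \leq (1-\kappa^2)\|B\|$, or equivalently
\begin{equation*}
\frac{\kappa}{1-\kappa^2} \leq \frac{\|B\|}{\delta_{Z_0,Z_1}}.
\end{equation*}
Since the map $t\mapsto t/(1-t^2)$ is strictly increasing on $[0,1)$ with $t/(1-t^2) = \tfrac{1}{2}\tan 2\theta$ for $t=\tan\theta$, inverting this relation produces exactly the desired bound \eqref{AK1ds}.

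The step I expect to be the main obstacle is the case where $\kappa$ does not attain as an eigenvalue but belongs only to the essential spectrum of $|K|$. Then the single eigenvector $x$ must be replaced by a singular sequence $\{x_n\} \subset \Ran \sE_{|K|}\bigl((\kappa(1-1/n),\kappa]\bigr)$ in the spirit of \eqref{xn}, and the errors $\varepsilon_n = |K|x_n - \kappa x_n$ and $\zeta_n = (I-|K|^2)^{1/2}x_n - \sqrt{1-\kappa^2}\,x_n$, controlled as in \eqref{epsn}--\eqref{zetn}, must be propagated through every occurrence of $|K|$ and $(I-|K|^2)^{1/2}$ in the scalar identity; the limit $n\to\infty$ then restores the same quadratic inequality and completes the argument.
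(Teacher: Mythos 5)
Your proposal is correct and follows essentially the same route as the paper: reduce to the $\Lambda$-form of the Riccati equation via Lemma \ref{LRKK}, test against an extremal (approximate) eigenvector of $|K|$ in the polar decomposition $K=U|K|$, use the subordination \eqref{Z01subord} to separate the quadratic forms of $\Lambda_0$ and $\Lambda_1$ by $\delta_{Z_0,Z_1}$, and invert $\kappa/(1-\kappa^2)=\tfrac12\tan(2\arctan\kappa)$. The only cosmetic difference is that the paper first multiplies the identity on the left by $U^*$ (commuting $(I-KK^*)^{1/2}$ past $U^*$ via Lemma \ref{phiT}) before pairing with $x$, whereas you pair directly with $Ux$ --- the same computation; your singular-sequence treatment of the non-attained case matches the paper's as well.
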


\begin{proof}
As in the proof of Theorem \ref{Ttans}, we apply Lemma \ref{LRKK}
and rewrite the Riccati equation in the form \eqref{Ric-t2t};  note
that the self-adjoint operators $\Lambda_0$ and $\Lambda_1$ on the
left-hand side of \eqref{Ric-t2t} are bounded by Theorem \ref{Lss}
since $A_0$ and $A_1$ are bounded, $B$ is bounded, and $K$ is a
uniform contraction. Assuming that $B\neq 0$, we again have $K\ne 0$
(cf.\ \eqref{Kne0}).

Let $U$ be the partial isometry in the polar decomposition $K=U|K|$
of $K$ (see the proof of Theorem \ref{Ttans}). By Lemma \ref{phiT}
with $\varphi(z)=(1-z)^{1/2}$, $M=U^*$, and $N=|K|^2U^*$, we~obtain
\begin{align}
\nonumber
U^*(I-KK^*)^{1/2}&=U^*(I-U|K|^2U^*)^{1/2}\\
\nonumber
&=(I-U^*U|K|^2)^{1/2}U^*\\
\nonumber
                 &=(I-|K|^2)^{1/2}U^*.
\end{align}
Here, in the last step, we have used the property that
$U$ is an isometry on $\Ran(|K|)=\Ran(K^*)$ by \eqref{Kisom} so that
$U^*U|K|=|K|$ and $U^*U|K|^2=|K|^2$.

If we apply the operator $U^*$ to both sides of
\eqref{Ric-t2t} from the left, we arrive at an equation that only
involves $|K|$, but not $K$ and $K^*$ themselves:
\begin{equation}
\label{Ric-MK}
|K|\Lambda_0-U^*\Lambda_1U\,|K|=-(I-|K|^2)^{1/2}U^*B^*(I-|K|^2)^{1/2}.
\end{equation}

Let $\kappa=\max\,\spec(|K|)$. Clearly, $0<\kappa=\|K\|<1$. If
$\kappa$ is an eigenvalue of $|K|$ and $x\in \fH_0$, $\|x\|=1$, is
an eigenvector of $|K|$ at $\kappa$, that is, $|K|x=\kappa x$, then
\eqref{Ric-MK} immediately implies that
\begin{equation}
\label{Ric-kappa}
\frac{\kappa}{1-\kappa^2}\bigl((\Lambda_0x,x)-(\Lambda_1Ux,Ux)\bigr)=-(U^*B^*x,x).
\end{equation}
Since $x\in\Ran(|K|)$, by \eqref{Kisom} we have $\|Ux\|=\|x\|=1$ so that
\begin{alignat}{2}
\label{Lam1} (\Lambda_1Ux,Ux) & \geq\min\,\spec(\Lambda_1)  & &
=\min\,\spec(Z_1),\\
\label{Lam0} (\Lambda_0x,x) & \leq\max\,\spec(\Lambda_0)& &
=\max\,\spec(Z_0).
\end{alignat}
Because the spectra of $Z_0$ and $Z_1$ are subordinated by
assumption \eqref{Z01subord}, the inequalities \eqref{Lam1} and
\eqref{Lam0} yield that
$$
\bigl|(\Lambda_0x,x)-(\Lambda_1Ux,Ux)\bigr|\geq\delta_{Z_0,Z_1}>0.
$$
This and \eqref{Ric-kappa} imply the inequality
\begin{equation}
\label{kappa-bound} \frac{\kappa}{1-\kappa^2}\leq
\frac{\|B\|}{\,\delta_{Z_0,Z_1}}.
\end{equation}
If $\kappa$ is not an eigenvalue of $|K|$, it belongs to the
essential spectrum of $|K|$.
We introduce a singular sequence $\{x_n\}_{n=1}^\infty$ of $|K|$ at $\kappa$ as in
\eqref{xn}; in particular, $\|x_n\|=1$.
With this choice of~$x_n$, \eqref{Ric-MK} implies that
\begin{equation}
\label{Ric-kappan}
\frac{\kappa}{1-\kappa^2}\bigl((\Lambda_0x_n,x_n)-(\Lambda_1Ux_n,Ux_n)\bigr)
=-(U^*B^*x_n,x_n)+\alpha_n
\end{equation}
where
$$
\alpha_n=\frac{(U^*\Lambda_1U\varepsilon_n,x_n)-
(\Lambda_0x_n,\varepsilon_n)-(U^*B^*\zeta_n,\zeta_n)}{1-\kappa^2}-
\frac{(U^*B^*\zeta_n,x_n)+(U^*B^*x_n,\zeta_n)}{\sqrt{1-\kappa^2}}.
$$
Because of $x_n\in\Ran(|K|)$ and \eqref{Kisom}, we have
$\|Ux_n\|=\|x_n\|=1$. Now the same reasoning as in \eqref{Lam1} and
\eqref{Lam0} yields that
$$
\bigl|(\Lambda_0x_n,x_n)-(\Lambda_1Ux_n,Ux_n)\bigr|\geq\delta_{Z_0,Z_1}>0.
$$
Hence \eqref{Ric-kappan} shows that
\begin{equation}
\label{kappan-bound} \frac{\kappa}{1-\kappa^2}\leq
\frac{\|B\|}{\delta_{Z_0,Z_1}}+\frac{|\alpha_n|}{\delta_{Z_0,Z_1}},
\quad n=1,2,\ldots,\infty.
\end{equation}
As $\|x_n\|=1$ and both $\Lambda_0$ and $\Lambda_1$ are bounded
operators, \eqref{epsn} and \eqref{zetn} show that $\alpha_n\to 0$
for $n\to\infty$. Taking the limit $n\to\infty$ in
\eqref{kappan-bound}, we again arrive at inequality
\eqref{kappa-bound}.

To complete the proof it remains to notice that, by the formula for
double arguments of the tangent function,  the left-hand side of
\eqref{kappa-bound} may be written as
\begin{equation}
\label{arctan} \frac{\kappa}{1-\kappa^2} = \frac{1}{2}
\tan(2\arctan\kappa),
\end{equation}
and to recall that $\kappa=\|K\|$.
\end{proof}

\begin{remark}
\label{Rt2t-sharp} The bound \eqref{AK1ds} is optimal. This may be
seen from Example \ref{Ex2} where $\fH_0=\fH_1=\bbC$ and $A_0=-d/2$,
$A_1=d/2$, $B=b$ with $b,d\in\bbR$, $0< b<d/2$. In fact, by
\eqref{arctan}, equality \eqref{Ksharp-t2t} therein is equivalent to
$$
\|K\|=\tan\left(\frac{1}{2}\arctan\frac{2\|B\|}{\delta_{Z_0,Z_1}}\right).
$$
\end{remark}

\section{Proofs of Theorems \ref{ThTheta} and \ref{ThTgen}}
\label{SecMain}

Using the results of Section \ref{SecKbounds}, we are now able to
prove our main results, Theorems \ref{ThTheta} and Theorem
\ref{ThTgen}, which were formulated in the introduction. In
particular, Theorem \ref{ThTheta} appears to be a corollary to
Theorem \ref{Tmain} in Section \ref{SecKbounds}.
\medskip

\noindent\textit{Proof of Theorem \ref{ThTheta}}. According to the
definitions \eqref{sigs}, \eqref{sig'} and Theorem \ref{Lss}, we
have
\[
   \sigma_i= \spec(A_i), \quad \sigma_i'= \spec(Z_i), \qquad i=0,1,
\]
with $Z_0=A_0+BK$,  $\dom(Z_0)=\dom(A_0)$ and $Z_1=A_1-B^*K^*$,
$\dom(Z_1)=\dom(A_1)$ as above.

We prove the theorem in the case
$\dist(\sigma'_0,\sigma_1)=\delta_{Z_0,A_1}>0$; the case
$\dist(\sigma'_1,\sigma_0)>0$ may be reduced to the first case by
replacing the involution $J$ with $J'=-J$ and making the
corresponding index changes in the notations of Assumption
\ref{Hypo1}.

By assumption, $\fH'_0$ is a maximal uniformly positive subspace of
the Krein space $\fK=\{\fH,J\}$. Thus Lemma \ref{Lmaxu} implies that
$\fH'_0$  is the graph of a uniform contraction $K:\,\fH_0\to\fH_1$,
i.e.\ $\fH'_0=\cG(K)$. By assumption, $\fH'_0$ is also a reducing
and hence invariant subspace of $L$. Now Lemma \ref{Lgraph} yields
that $K$ is a strong solution to the operator Riccati equation
\eqref{RicABB}. By Lemma \ref{Lspn} and Theorem \ref{Lss}, we know
that $\fH'_1= \cG(K^*)  = \cG(K)^{[\perp]} = {\fH'_0}^{[\perp]}$ and
hence, by \eqref{KThet} and definition \eqref{ThetaDef},
\begin{equation}
\label{TKKT} \|\tan\Theta_0\|= \Theta(\fH_0,\fH'_0) = \|K\|=\|K^*\|=
\Theta(\fH_1,\fH'_1)= \|\tan\Theta_1\|.
\end{equation}
Now both claims (i) and (ii) are immediate consequences of the
respective statements (i) and (ii) of Theorem \ref{Tmain}. \qed

\medskip

\begin{remark}
\label{XXYYY1} Under the natural assumption that
$\|V\|<\delta_i$ for some $i\in\{0,1\}$, the bound \eqref{TanT0s} is
optimal with respect to the mutual positions of the sets $\sigma_i$
and $\sigma'_{1-i}$ described in condition \eqref{disp1}. This
follows from Remark \ref{REx1} and the subsequent examples together
with the equalities~\eqref{TKKT}.
\end{remark}

Theorems \ref{Ttanp}, \ref{Ttans}, and \ref{Ttan2T}  enable us to
prove Theorem \ref{ThTgen}.
\medskip

\noindent\textit{Proof of Theorem \ref{ThTgen}}. By definition
\eqref{sig'} and Theorem \ref{Lss}, we have
\[
   \sigma_i'= \spec(Z_i), \qquad i=0,1,
\]
with $Z_0=A_0+BK$,  $\dom(Z_0)=\dom(A_0)$ and $Z_1=A_1-B^*K^*$,
$\dom(Z_1)=\dom(A_1)$. Hence assumption \eqref{dist'} implies that
$\delta_{Z_0,Z_1} = \dist(\sigma'_0,\sigma'_1) = \widehat \delta >
0$ by \eqref{delZZ}.

As in the proof of Theorem~\ref{ThTheta}, we conclude that $\fH'_0$
is the graph $\cG(K)$ of a uniformly contractive strong solution $K$
to the operator Riccati equation \eqref{RicABB}, while $\fH'_1$ is
the graph $\cG(K^*)$ of the adjoint of $K$.

For claim (i), the bound \eqref{T2TN} follows from estimate
\eqref{AKpid} in Theorem \ref{Ttanp} using relation \eqref{TKKT}.

For claim (ii), the bound \eqref{T2TNs} for $i=0$ follows from
estimate \eqref{KZconv} in Theorem \ref{Ttans}, again using relation
\eqref{TKKT}; for $i=1$ it follows from the case $i=0$ by passing
from $J$ to the new involution $J'=-J$.

For claim (iii), the bound \eqref{T2T} follows from estimate
\eqref{AK1ds} in Theorem~\ref{Ttan2T} if we use \eqref{TKKT} and the
facts that $\|\tan\Theta_j\|=\tan\|\Theta_j\|$ and, by
\eqref{Thetapi4}, $\|\tan2\Theta_j\|=\tan2\|\Theta_j\|$, $j=0,1$.
\qed

\medskip

\begin{remark}
\label{XXYYY2} The bound \eqref{T2TNs} is sharp whenever the gap of
$\sigma'_{1-i}$ containing $\sigma'_i$ is finite. For $i=0$ this
follows from the fact that the estimate \eqref{KZconv} in Theorem
\ref{Ttans} is sharp by Example \ref{Ex1} (see
Remark~\ref{REx4}) together with the identity  \eqref{TKKT}; for
$i=1$ it follows using the involution $J'=-J$ instead of $J$.
\end{remark}

\begin{remark}
\label{XXYYY3} The bound \eqref{T2T} is best possible. This follows
from the fact that the estimate \eqref{AK1ds} in
Theorem~\ref{Ttan2T} is sharp by Example \ref{Ex2} (see Remark
\ref{Rt2t-sharp}) together with \eqref{Thetapi4} and \eqref{TKKT}.
\end{remark}

\section{Estimates for the perturbed spectra}
\label{SecSpec}

In the next section we want to use the operator angle bounds of
Theorems \ref{ThTheta} and \ref{ThTgen} to prove a priori bounds on
the variation of spectral subspaces of the self-adjoint operator $A$
under an off-diagonal $J$-self-adjoint perturbation $V$. To this
end, we establish some tight enclosures for the spectral components
of the perturbed operator $L=A+V$ in the present section.

We assume that the initial spectra $\sigma_0=\spec(A_0)$ and
$\sigma_1=\spec(A_1)$ of the block diagonal entries $A_0$ and $A_1$
of $A$ (see \eqref{Adiag}) are disjoint, i.e.\
\begin{equation}
\label{ddist} d=\dist(\sigma_0,\sigma_1)>0.
\end{equation}
Then the subspaces $\fH_0$ and $\fH_1$ introduced in Assumption
\ref{Hypo1} are the spectral subspaces of $A$ associated with the
spectral components $\sigma_0$ and $\sigma_1$, respectively.

In the following, our aim is to find certain bounds for the
perturbation $V$ and a constant $r_{V}\ge 0$ such that
\begin{equation}
\label{Ors} \dist(\sigma_0',\sigma_1')>0 \quad\text{and} \quad
\sigma'_i\subset O_{r_{V}}(\sigma_i) \cap \bbR, \ \ i=0,1.
\end{equation}
This yields the lower bounds
\begin{equation}
\label{lowbound} \delta_i= \dist(\sigma_i,\sigma'_{1-i}) \ge
d-r_{V}, \ \ i=0,1, \quad\text{and} \quad \widehat \delta =
\dist(\sigma'_0,\sigma'_1) \ge d - 2\, r_{V}.
\end{equation}
Together with the estimates in Theorems \ref{ThTheta} and
\ref{ThTgen}, respectively, they will give us the desired a priori
estimates for $\tan \Theta_j$, depending only on the initial
distance $d$ of the unperturbed spectra and on the norm of $V$.

For completely arbitrary (i.e.\ not necessarily off-diagonal)
perturbations $V$ of the self-adjoint operator $A$, it is well-known
that the assumption
\begin{equation}
\label{Vd2} \|V\|<\frac{d}{2}
\end{equation}
guarantees that \eqref{Ors} holds with
\begin{equation}
\label{dsigs} r_{V} = \|V\|
\end{equation}
(see, e.g., \cite[Section V.4]{K}) and hence, by \eqref{lowbound},
\begin{equation}
\label{dsigi1i} \delta_i\geq d- \|V\| > \frac d2, \ \ i=0,1, \quad
\widehat \delta \geq d- 2 \|V\|> 0.
\end{equation}

For off-diagonal perturbations $V$, earlier results in \cite{KMM3},
\cite{KMM4} for self-adjoint $V$ and in \cite{AlMoSh} for
non-symmetric $V$ show that the constant $r_V$ may be improved
considerably. In the following we extend these results under the
sole assumption \eqref{ddist} that the spectral components
$\sigma_0$ and $\sigma_1$ of $A$ are disjoint.

Unlike the previous sections, we do not always require $A$ to be
self-adjoint and $V$ to be $J$-self-adjoint; here we use the
following more general setting.

\begin{hypothesis}
\label{Hypo2} Let $\fH_0$ and $\fH_1$ be complementary orthogonal
subspaces of the Hilbert space~$\fH$. Assume that $A$ is a closed
operator on $\fH=\fH_0\oplus\fH_1$ diagonal with respect to this
decomposition,~i.e.
\[
A=\left(\begin{array}{cc} A_0 & 0\\ 0 & A_1\end{array}\right),\quad
\dom(A)=\dom(A_0)\oplus\dom(A_1),
\]
where $A_0$ and $A_1$ are closed operators on $\fH_0$ and $\fH_0$,
respectively. Suppose that $V$ is an off-diagonal bounded operator
on $\fH$, i.e.
\[
V=\left(\begin{array}{cc} 0 & B\\ C & 0\end{array}\right)
\]
with $B\in\cB(\fH_1,\fH_0)$, $C\in\cB(\fH_0,\fH_1)$, and let
\begin{equation}
\label{L-nonsymm} L=A+V=\left(\begin{array}{cl} A_0 & B\\ C &
A_1\end{array}\right), \quad\dom(L)=\dom(A).
\end{equation}
\end{hypothesis}

The following two elementary auxiliary results are used in the
proofs below.

\begin{lemma}
\label{schur} Assume that $L=A+V$ satisfies Assumption \ref{Hypo2}
and define the Schur complement $S_0$ of $A$ by
\[
  S_0(\la) := A_0-\la - B(A_1-\la)^{-1}C, \quad \dom\big(S_0(\la)\big) := \dom (A_0),
\]
for $\la \in \rho(A_1)$. Then
\begin{enumerate}
\item[(i)] the resolvent set $\rho(S_0) :=
\big\{ \la\in \rho(A_1) : S_0(\la) \text{ is bijective} \big\}$ of
$\,S_0$ satisfies
 \[ \rho(S_0) = \rho(L)\cap \rho(A_1); \]
 \item[(ii)]
 for $\la\in \rho(A_0)\cap \rho(A_1)$ we have
 \begin{equation}
 \label{neumann}
   \| B(A_1-\la)^{-1}C(A_0-\la)^{-1}\|<1 \implies \la\in \rho(L).
 \end{equation}
\end{enumerate}
\end{lemma}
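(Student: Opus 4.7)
The plan is to establish both parts via a standard Frobenius--Schur factorization of $L-\lambda$. For $\lambda\in \rho(A_1)$, I would verify by direct multiplication on $\dom(A)=\dom(A_0)\oplus\dom(A_1)$ the identity
\[
L-\lambda \;=\; \begin{pmatrix} I & B(A_1-\lambda)^{-1} \\ 0 & I \end{pmatrix} \begin{pmatrix} S_0(\lambda) & 0 \\ 0 & A_1-\lambda \end{pmatrix} \begin{pmatrix} I & 0 \\ (A_1-\lambda)^{-1}C & I \end{pmatrix}.
\]
The outer triangular factors are bounded and boundedly invertible on $\fH=\fH_0\oplus\fH_1$, their inverses being obtained by flipping the sign of the off-diagonal entry; moreover, the rightmost factor preserves $\dom(A)$ (since $(A_1-\lambda)^{-1}C$ maps $\fH_0$ into $\dom(A_1)$ and $B$ is bounded, so the leftmost factor lands in $\fH$).

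For (i), the factorization shows that $L-\lambda:\dom(A)\to \fH$ is a bijection if and only if the middle block-diagonal factor is a bijection from $\dom(A)$ onto $\fH$. Since $\lambda\in\rho(A_1)$ makes $A_1-\lambda:\dom(A_1)\to\fH_1$ bijective, this reduces to the bijectivity of $S_0(\lambda):\dom(A_0)\to\fH_0$. Hence $\lambda\in\rho(L)\cap\rho(A_1)$ iff $\lambda\in\rho(S_0)$, proving the claimed equality.

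For (ii), when $\lambda\in\rho(A_0)\cap\rho(A_1)$ I would write
\[
S_0(\lambda) \;=\; \bigl[I - B(A_1-\lambda)^{-1}C(A_0-\lambda)^{-1}\bigr]\,(A_0-\lambda),
\]
where $(A_0-\lambda):\dom(A_0)\to\fH_0$ is bijective. Under the Neumann-type hypothesis $\|B(A_1-\lambda)^{-1}C(A_0-\lambda)^{-1}\|<1$, the bracketed factor is a bounded bijection on $\fH_0$, so $S_0(\lambda)$ is bijective; part (i) then yields $\lambda\in\rho(L)$.

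The only real bookkeeping to attend to is that all products are well-defined as unbounded-operator compositions and that the triangular factors preserve (respectively, map onto) $\dom(A)$; this follows at once from the boundedness of $B$, $C$ and from the fact that $(A_1-\lambda)^{-1}$ has range $\dom(A_1)$. I expect no real obstacle — this is essentially the standard Schur-complement lemma specialized to a closed diagonal operator with bounded off-diagonal perturbation.
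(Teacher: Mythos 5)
Your proof is correct and is essentially the paper's own argument: the Frobenius--Schur factorization you write down is precisely the block Gaussian elimination that the paper carries out as an equivalence of linear systems for part (i), and your part (ii) is word-for-word the paper's Neumann series argument. The domain bookkeeping you flag (the lower-triangular factor maps $\dom(A)$ bijectively onto itself because $(A_1-\la)^{-1}C$ lands in $\dom(A_1)$) is exactly the point that makes the reduction valid, so there is no gap.
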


\begin{proof}
Both claims (i) and (ii) are well known (see, e.g.,
\cite{Tretter2008}); we recall the short proofs for the convenience
of the reader.

(i) It is easy to check that, for arbitrary $f\in \fH_0$, $g\in
\fH_1$ and $x\in \dom(A_0)$, $y\in \dom(A_1)$,
\[
(L-\la) \binom xy = \binom fg \iff \left\{ \begin{array}{rl}
S_0(\la) x \!\!\!\!
&= f - B (A_1-\la)^{-1} g, \\[1mm] y\!\!\!\!
&= (A_1-\la)^{-1} (g - Cx), \end{array} \right.
\]
which proves the claim.

(ii) For $\la\in \rho(A_0)\cap \rho(A_1)$, one can write
\[
 S_0(\la) = \bigl( I - B(A_1-\la)^{-1}C(A_0-\la)^{-1} \bigr)
 \bigl(A_0-\la\bigr).
\]
Thus a Neumann series argument together with (i) proves
\eqref{neumann}.
\end{proof}

\begin{lemma}
\label{elementary} Let $a,b,v \in \bbR$ be such that $\delta:=b-a>0$
and $0\leq v<\delta/2$. Then
\[
  (t-a)(b-t) > v^2 \ \iff \ a+r< t < b-r
\]
where
\begin{align}
\label{eps}
 r = & \frac{\delta}{2}-\sqrt{\frac{\delta^2}{4}-v^2}
   = v\,\, \tan\left( \frac 12 \arcsin \frac{2v}{\delta} \right).
\end{align}
\end{lemma}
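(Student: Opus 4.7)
The plan is to treat $(t-a)(b-t) > v^2$ as a quadratic inequality in $t$. First I would rewrite it as
\[
   t^2 - (a+b) t + ab + v^2 < 0,
\]
which defines an open interval between the two real roots of the associated quadratic (note the discriminant $(a+b)^2 - 4(ab+v^2) = \delta^2 - 4v^2$ is positive by the hypothesis $v<\delta/2$). Computing the roots via the quadratic formula yields
\[
   t_\pm = \frac{a+b}{2} \pm \sqrt{\frac{\delta^2}{4} - v^2}.
\]
A direct rearrangement using $\delta = b-a$ shows $t_- = a + r$ and $t_+ = b - r$ with $r = \delta/2 - \sqrt{\delta^2/4 - v^2}$, which gives the left equality and the equivalence. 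Since $0\le v<\delta/2$ one has $0\le r<\delta/2$, so the interval $(a+r, b-r)$ is non-empty.

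It remains to verify the trigonometric form of $r$. I set $\theta := \arcsin(2v/\delta) \in [0,\pi/2)$, so that $\sin\theta = 2v/\delta$ and $\cos\theta = \frac{2}{\delta}\sqrt{\delta^2/4 - v^2}$. Applying the standard half-angle identity $\tan(\theta/2) = (1-\cos\theta)/\sin\theta$ (valid since $\sin\theta>0$ for $v>0$; the case $v=0$ is trivial) and multiplying by $v$ gives
\[
   v\tan\!\left(\frac{\theta}{2}\right) = v\cdot\frac{1-\cos\theta}{\sin\theta}
   = \frac{\delta}{2}\bigl(1-\cos\theta\bigr)
   = \frac{\delta}{2} - \sqrt{\frac{\delta^2}{4}-v^2},
\]
which matches $r$.

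There is really no substantial obstacle here: the result is a purely algebraic identity combined with a standard half-angle computation. The only subtlety is handling the degenerate case $v=0$ (where $r=0$ and the trigonometric expression is $0\cdot\tan 0$, interpreted as $0$) and keeping track of signs to ensure that $t_-=a+r$ corresponds to the $-$ root and $t_+=b-r$ to the $+$ root.
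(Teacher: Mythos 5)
Your proof is correct and follows essentially the same route the paper takes: the paper simply declares the interval characterization obvious (it is the same quadratic-roots computation you spell out) and verifies the second expression for $r$ by a standard tangent half-angle identity, which is what your computation with $\tan(\theta/2)=(1-\cos\theta)/\sin\theta$ amounts to. Your write-up just makes the omitted details explicit, including the harmless degenerate case $v=0$.
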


\begin{proof}
The claims are obvious; for the last equality, observe the formula
for double arguments of the sine function in terms of the tangent
function.
\end{proof}

In the following theorem, we consider the case where the diagonal
entries $A_0$ and $A_1$ of the block operator matrix
\eqref{L-nonsymm} are self-adjoint and their spectra do not
intersect; the bounded perturbation $V$ need not have any symmetry
here.

\begin{theorem}
\label{c1} Assume that $L=A+V$ satisfies Assumption \ref{Hypo2} and
let  $A_0$ and $A_1$ be self-adjoint. Assume, in addition, that
their spectra $\sigma_0=\spec(A_0)$ and $\sigma_1=\spec(A_1)$ are
disjoint,~i.e.\
$$
d=\dist(\sigma_0,\sigma_1)>0,
$$
and let the entries $B$ and $C$ of $\,V$ be such that
\begin{equation}
\label{BCd2} \sqrt{\|B\|\|C\|}<\frac{d}{2}.
\end{equation}
Then
\begin{equation}
\label{OrBC}
{\spec(L) = \sigma_0' \,\dot\cup\,
\sigma_1', \quad \sigma'_i\subset O_{r_{V}}(\sigma_i) \cap \bbR, \,\,
i=0,1,}
\end{equation}
where $r_V$ is given by
\begin{equation}
\label{r} r_V=\sqrt{\|B\|\,\|C\|} \ \tan \left( \frac{1}{2} \arcsin
\frac{2 \sqrt{\|B\|\,\|C\|}}{d} \right) < \sqrt{\|B\|\,\|C\|};
\end{equation}
in particular, if $\,V$ is $J$-self-adjoint and hence $C=-B^\ast$,
then
\begin{equation}
\label{r-symm} r_V=\|V\| \ \tan \left( \frac{1}{2} \arcsin \frac{2
\|V\|}{d} \right) < \|V\|.
\end{equation}
\end{theorem}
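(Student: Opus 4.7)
The plan is to prove that every $\lambda \in \bbC \setminus [O_{r_V}(\sigma_0)\cup O_{r_V}(\sigma_1)]$ lies in $\rho(L)$, and then obtain the disjoint splitting of $\spec(L)$ from the disjointness of the two neighbourhoods. Since such a $\lambda$ automatically lies in $\rho(A_0)\cap\rho(A_1)$, Lemma \ref{schur}(ii) reduces the task to verifying $\|B(A_1-\lambda)^{-1}C(A_0-\lambda)^{-1}\| < 1$. The self-adjointness of $A_0, A_1$ yields $\|(A_i-\lambda)^{-1}\| = 1/\dist(\lambda,\sigma_i)$ for $i=0,1$, hence
\[
\|B(A_1-\lambda)^{-1}C(A_0-\lambda)^{-1}\| \le \frac{\|B\|\|C\|}{\dist(\lambda,\sigma_0)\,\dist(\lambda,\sigma_1)}.
\]
Thus everything reduces to the geometric inequality $\dist(\lambda,\sigma_0)\,\dist(\lambda,\sigma_1) > v^2$ with $v := \sqrt{\|B\|\|C\|}$.

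The core step is this geometric bound. Write $p := \dist(\lambda,\sigma_0)$ and $q := \dist(\lambda,\sigma_1)$. First, I would choose approximate nearest points $\lambda_i \in \sigma_i \subset \bbR$ and apply the triangle inequality in $\bbC$,
\[
d \le |\lambda_0 - \lambda_1| \le |\lambda - \lambda_0| + |\lambda - \lambda_1|,
\]
which yields $p + q \ge d$ for every $\lambda \in \bbC$; this is where the reality of $\sigma_0, \sigma_1$ is used. Now assuming $p > r_V$ and $q > r_V$, set $p = r_V + \alpha$, $q = r_V + \beta$ with $\alpha, \beta > 0$ and $\alpha + \beta \ge d - 2 r_V \ge 0$, the last inequality holding because the hypothesis $v < d/2$ forces $r_V = d/2 - \sqrt{d^2/4 - v^2} < d/2$. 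Expanding gives
\[
pq = r_V^2 + r_V(\alpha + \beta) + \alpha\beta > r_V^2 + r_V(d - 2 r_V) = r_V(d - r_V) = v^2,
\]
the final identity being the defining relation $r_V(d - r_V) = (d/2)^2 - (d^2/4 - v^2) = v^2$. Hence $\lambda \in \rho(L)$ and $\spec(L) \subset O_{r_V}(\sigma_0) \cup O_{r_V}(\sigma_1)$.

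Disjointness $O_{r_V}(\sigma_0) \cap O_{r_V}(\sigma_1) = \emptyset$ is immediate from $2r_V < d$ combined with $p + q \ge d$, so defining $\sigma_i' := \spec(L) \cap O_{r_V}(\sigma_i)$ produces the disjoint decomposition in \eqref{OrBC}. The alternative trigonometric form of $r_V$ in \eqref{r} is exactly what Lemma \ref{elementary} gives when applied with $a=0$, $b=d$, and $v = \sqrt{\|B\|\|C\|}$. The $J$-self-adjoint specialization \eqref{r-symm} is then immediate from $C = -B^*$, which forces $\|C\|=\|B\|=\|V\|$ and hence $\sqrt{\|B\|\|C\|}=\|V\|$.

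The only substantive obstacle is the sharp geometric step of turning the two individual lower bounds $p, q > r_V$, together with the universal triangle estimate $p + q \ge d$, into the product bound $pq > v^2$; everything else is either a mechanical application of the Schur-complement Neumann-series criterion or a routine disjointness check. The tightness of the radius is dictated entirely by the identity $r_V(d - r_V) = v^2$, which is precisely what tailors $r_V$ to the resolvent estimate.
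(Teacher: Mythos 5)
Your proof is correct, and it follows the paper's overall skeleton — reduce to the Neumann--Schur criterion of Lemma~\ref{schur}\,(ii), use self-adjointness of $A_0,A_1$ to get $\|(A_i-\la)^{-1}\|=1/\dist(\la,\sigma_i)$, and then verify the geometric inequality $\dist(\la,\sigma_0)\dist(\la,\sigma_1)>\|B\|\|C\|$ off the two neighbourhoods. Where you genuinely diverge is in how that geometric inequality is established. The paper splits into two cases: $\la$ lying in a vertical strip over a finite gap $(a,b)$ of $\spec(A)$ separating $\sigma_0$ from $\sigma_1$ (handled via Lemma~\ref{elementary} applied to $\Real\la$), versus all remaining $\la$ (handled via the pair of estimates \eqref{Ineq3}). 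You replace this case analysis by the single observation that the triangle inequality gives $p+q\geq d$ for $p=\dist(\la,\sigma_0)$, $q=\dist(\la,\sigma_1)$, and that $p,q>r_V$ together with $p+q\geq d$ and the defining identity $r_V(d-r_V)=\|B\|\|C\|$ force $pq>\|B\|\|C\|$; this is a cleaner, gap-enumeration-free route to the same conclusion, and it buys uniformity (no need to locate $\la$ relative to particular gaps). Two small remarks: the reality of $\sigma_0,\sigma_1$ is not actually what makes $p+q\geq d$ work (that is just the triangle inequality for arbitrary closed sets); it is used in the resolvent-norm identity. And, like the paper's own proof, your argument establishes the enclosure $\sigma_i'\subset O_{r_V}(\sigma_i)$ but not the reality claim $\sigma_i'\subset\bbR$ appearing in \eqref{OrBC}, which in the general (non-$J$-self-adjoint) setting of Assumption~\ref{Hypo2} requires separate justification; since the paper omits this too, it is not a defect specific to your write-up.
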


\begin{proof}
Throughout the proof, we assume that $\la\in \bbC$ is such that
\begin{equation}
\label{lssr} \dist (\la,\sigma_0\cup\sigma_1) > r_V;
\end{equation}
hence, in particular, $\la\in\rho(A_0)\cap\rho(A_1)$. Since $A_0$
and $A_1$ are assumed to be self-adjoint, we have
$\|(A_i-\la)^{-1}\|=1/\dist(\la,\sigma_i)$, $i=0,1$, and thus
\begin{equation}
\label{Ineq1}
   \| B(A_1-\la)^{-1}C(A_0-\la)^{-1}\|
   \le \frac{\|B\|\,\|C\|}{\dist(\la, \sigma_0) \dist(\la, \sigma_1)}.
\end{equation}
First we consider the case that $\la$ lies in a strip of the form
\begin{equation}
\label{strip} \{ z\in\bbC \ | \ a+r_V < \Real z < b-r_V \}
\end{equation}
where $(a,b)$ is a finite gap of the spectrum of $A$ with $a\in
\sigma_0$ and $b\in \sigma_1$; the case $a\in \sigma_1$ and $b\in
\sigma_0$ is analogous. Then we have $b-a \ge d$ and hence, by
assumption \eqref{BCd2} and Lemma~\ref{elementary}, we obtain
\begin{equation}
\label{Ineq2} \frac{\|B\|\,\|C\|}{\dist(\la, \sigma_0) \dist(\la,
\sigma_1)}= \frac{\|B\|\,\|C\|}{|\la-a||\la-b|}\leq
\frac{\|B\|\,\|C\|}{(\Real\la-a)(b-\Real\la)}<1.
\end{equation}
Now \eqref{Ineq1} and \eqref{Ineq2} together with
Lem\-ma~\ref{schur}\,(ii) show that $\la\in \rho(L)$.

If $\la$ does not belong to a strip of the form~\eqref{strip} with
$a\in \sigma_i$ and $b\in \sigma_{1-i}$ for $i=0$ or $i=1$, then it
is not difficult to check that, by \eqref{lssr}, either for $i=0$ or
$i=1$
\begin{equation}
\label{Ineq3} \dist(\Real\lambda,\sigma_i)\geq d-r_V, \quad
\dist(\lambda,\sigma_{1-i})>r_V.
\end{equation}
Combining \eqref{Ineq1}, \eqref{Ineq3} together with the definition
\eqref{r} of $r_V$, we arrive at the estimate
\[
   \| B(A_1-\la)^{-1}C(A_0-\la)^{-1}\| < \frac{\|B\|\,\|C\|}{r_V(d-r_V)} = 1.
\]
Hence Lemma \ref{schur}\,(ii) again shows that $\la\in \rho(L)$.
\end{proof}

\begin{remark}
\label{compare} Theorem \ref{c1} improves the spectral bounds given
in \cite[Remarks~4.6 and~4.13]{AlMoSh}. In fact, the bound $r$ in
\cite[(4.11)]{AlMoSh} can be written equivalently as
\begin{equation}
\label{rtans} r \!=\! \sqrt{\|B\|\|C\|} \tanh \!\left( \!\frac 12
\arctanh \frac {2 \sqrt{\|B\|\|C\|}}\delta \right) \!=\!
\sqrt{\|B\|\|C\|} \tan \!\left( \!\frac 12 \arcsin \frac {2
\sqrt{\|B\|\|C\|}}\delta \right)
\end{equation}
and applies whenever $\|B\|\|C\| < \delta/2$. Here, by
\cite[Theorem~4.11]{AlMoSh}, we have $\delta=2d/\pi<d$ in the general
case \eqref{ddist} and $\delta=d$ if one additionally assumes that
\begin{equation}
\label{conv} \conv(\sigma_0)\cap\sigma_{1}=\emptyset \quad \text{or}
\quad \sigma_0\cap\conv(\sigma_1)=\emptyset.
\end{equation}
This shows that, under the additional assumption \eqref{conv}, the
spectral bound in Theorem \ref{c1} coincides with the one in
\cite[Remarks~4.6]{AlMoSh}, whereas in the general case
\eqref{ddist} Theorem \ref{c1} holds with the weaker norm bound
$\|B\|\|C\| < d/2$ on $V$ and the bound $r_V$ is strictly smaller
than the bound $r$ in \cite[(4.11)]{AlMoSh}.
\end{remark}

\begin{remark}
The spectral bound \eqref{OrBC} with $r_V$ given by \eqref{r} is
optimal. This may be seen from \cite[Exam\-p\-les~4.15,
4.16]{AlMoSh}.
\end{remark}

In the next two theorems we drop the assumption that $A$ is
self-adjoint. Instead we impose conditions to ensure that the
components $A_0$ and $A_1$ of $A$ satisfy certain resolvent
estimates.

To this end, we use the \emph{numerical range} $W(T)$ of a linear
operator $T$ with domain $\Dom(T)$ in a Hilbert space, defined as
\[
 W(T)=\{ (Tx,x) \ | \ x\in \Dom(T), \, \|x\|=1 \}.
\]
Recall that the numerical range is always convex and that $\spec(T)
\subset \overline{W(T)}$ if every (of the at most two)
connected components of $\bbC\setminus W(T)$ contains at least
one point of $\rho(T)$ (see \cite[Theorems~V.3.1 and V.3.2]{K}); in
this case,
\[
 \| (T-\la)^{-1} \| \le \frac 1{\dist(\la, W(T))}, \quad \la \not\in
\overline{W(T)}.
\]

First we consider the case where the spectra and the numerical
ranges of $A_0$ and $A_1$ are separated by a vertical strip.

\begin{theorem}
\label{c2} Assume that $L=A+V$ satisfies Assumption \ref{Hypo2} and
that there exist $a, b \in \bbR$ such that $d=b-a>0$,
\begin{equation}
 \label{ch1} \Re W(A_0) \le a < b \le \Re W(A_1) \quad \text{or} \quad
 \Re W(A_1) \le a < b \le \Re W(A_0),
\end{equation}
and
\begin{equation}
 \label{ch2}
  \{ z \in \bbC \,\,\bigl|\,\, a < \Re z < b \} \,\cap\, \rho(A_0) \,\cap\,
\rho(A_1) \not = \emptyset.
\end{equation}
If
$$\sqrt{\|B\|\,\|C\|} < d/2,$$
and $r_V$ is defined as in \eqref{r}, then
\[
  \big\{ z\in \bbC \,\,\bigl|\,\, a + r_V < \Re z <  b - r_V \big\}
\subset \rho(L).
\]
\end{theorem}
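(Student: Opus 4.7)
The plan is to mimic the structure of the proof of Theorem \ref{c1}, but replacing the spectral theorem (which supplied the self-adjoint resolvent bound) with the numerical-range resolvent estimate. Specifically, I want to show that for every $\lambda$ in the open vertical strip $\Sigma_V := \{z\in\bbC\,|\,a+r_V<\Real z<b-r_V\}$, the pencil $L-\lambda$ is invertible, using the Schur-complement criterion of Lemma~\ref{schur}\,(ii) together with Lemma~\ref{elementary}.

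First I would reduce to the case $\Real W(A_0)\leq a<b\leq \Real W(A_1)$, the other case being symmetric. The initial step is to verify that every $\lambda\in\Sigma_V$ belongs to $\rho(A_0)\cap\rho(A_1)$. Because $W(A_0)$ is convex and contained in the half-plane $\{\Real z\leq a\}$, the open half-plane $\{\Real z>a\}$ is a single connected component of $\bbC\setminus\overline{W(A_0)}$; by hypothesis \eqref{ch2} this component meets $\rho(A_0)$, and hence by the spectral-inclusion theorem quoted before Theorem~\ref{c2} the entire component lies in $\rho(A_0)$. Analogously $\{\Real z<b\}\subset\rho(A_1)$. Since $\Sigma_V$ is contained in both half-planes, every $\lambda\in\Sigma_V$ lies in $\rho(A_0)\cap\rho(A_1)$, and moreover the numerical-range resolvent estimate gives
\begin{equation*}
\|(A_0-\lambda)^{-1}\|\leq\frac{1}{\dist(\lambda,W(A_0))}\leq\frac{1}{\Real\lambda-a},\qquad
\|(A_1-\lambda)^{-1}\|\leq\frac{1}{b-\Real\lambda}.
\end{equation*}

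Consequently
\begin{equation*}
\|B(A_1-\lambda)^{-1}C(A_0-\lambda)^{-1}\|\leq\frac{\|B\|\,\|C\|}{(\Real\lambda-a)(b-\Real\lambda)}.
\end{equation*}
Applying Lemma~\ref{elementary} with $\delta=d=b-a$ and $v=\sqrt{\|B\|\,\|C\|}<d/2$, and noting that the definition \eqref{r} of $r_V$ matches the quantity $r$ produced by that lemma, we see that $\lambda\in\Sigma_V$ forces $(\Real\lambda-a)(b-\Real\lambda)>\|B\|\,\|C\|$, whence
\begin{equation*}
\|B(A_1-\lambda)^{-1}C(A_0-\lambda)^{-1}\|<1.
\end{equation*}
Lemma~\ref{schur}\,(ii) then yields $\lambda\in\rho(L)$, which is exactly the conclusion.

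I expect the only delicate point to be justifying the spectral inclusion $\spec(A_i)\subset\overline{W(A_i)}$ on the relevant component of $\bbC\setminus\overline{W(A_i)}$; this is where assumption \eqref{ch2} is essential, since without some resolvent point in the strip one cannot exclude the bad scenario where $W(A_i)$ is itself a vertical strip and its complement has two components, only one of which is controlled. Once this topological check is in place, the rest of the argument is a direct transcription of the scalar estimate of Lemma~\ref{elementary} into the Neumann-series criterion of Lemma~\ref{schur}\,(ii), exactly as in the proof of Theorem~\ref{c1}.
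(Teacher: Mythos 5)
Your proposal is correct and follows essentially the same route as the paper's proof: reduce to one ordering of the numerical ranges, use assumption \eqref{ch2} together with the component argument (Kato, Theorem V.3.2) to get $\spec(A_i)\subset\overline{W(A_i)}$ and the resolvent bounds $\|(A_0-\lambda)^{-1}\|\le(\Real\lambda-a)^{-1}$, $\|(A_1-\lambda)^{-1}\|\le(b-\Real\lambda)^{-1}$, and then conclude via Lemma~\Ref{elementary} and Lemma~\Ref{schur}\,(ii). The only difference is that you spell out the topological step that the paper handles by citation; your phrasing that the half-plane \emph{is} a component should read that it is \emph{contained in} a single component, but the argument is unaffected.
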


\begin{proof}
Without loss of generality, we suppose that $\Re W(A_0) \le a < b
\le \Re W(A_1)$. In this case the assumptions \eqref{ch1},
\eqref{ch2} imply that (see \cite[Theorem~V.3.2]{K})
\begin{equation}
\label{ssAA}
 \spec(A_0) \subset \overline{W(A_0)} \subset \{ z \in \bbC
 \,\,\bigl|\,\,\Re z \le a\}, \quad \spec(A_1) \subset
 \overline{W(A_1)} \subset \{ z \in \bbC \,\,\bigl|\,\, \Re z \geq
 b\}.
\end{equation}

Let $\la\in\bbC$ be such that $a+r_V < \Re \la <  b+r_V$ and hence
$\la\in \rho(A_0)\cap \rho(A_1)$. Then, by \eqref{ssAA},
\[
 \| (A_0 - \la)^{-1} \| \le \frac 1{\dist\big(\la,W(A_0)\big)} \le
 \frac 1{\Re \la - a}, \quad \| (A_1 - \la)^{-1} \|  \le \frac
 1{\dist\big(\la,W(A_1)\big)} \le \frac 1 {b - \Re \la}.
\]
Thus Lemma~\ref{elementary} shows that
\[
  \| B(A_1-\la)^{-1}C(A_0-\la)^{-1}\| \le
  \frac{\|B\|\,\|C\|}{(\Re \la- a)(b-\Re \la)} <1,
\]
and hence $\la\in \rho(L)$ by Lemma~\ref{schur}\,(ii).
\end{proof}

Next we consider the case where the spectra and the numerical ranges
of $A_0$ and $A_1$ (and hence of $A$) lie in one half-plane and the
perturbation $V$ is $J$-self-adjoint, i.e.\ $C=-B^\ast$.

While all previous theorems were of perturbational character, the
following theorem is not. In fact, we prove implications of the form
$$
\Re \spec(A) \le \Re W(A) \le a \ \implies \ \Re \spec(L) \le a,
$$
independently of the norm of $V$.

This type of results relies on the \emph{quadratic numerical range}
$W^2(L)$ of the operator $L$ with respect to the block
representation \eqref{L-nonsymm}. The set $W^2(L)$ is defined as
(see \cite[(2.2)]{LT1998} and also
\cite[Definition~3.1]{Tretter2009} or
\cite[Definition~2.5.1]{Tretter2008})
\[
 W^2(L) = \bigcup_{\efrac{x\in \dom(A_0),\,\, y\in
\dom(A_1)}{\|x\|=\|y\|=1}} \spec (L_{x,y})
\]
where $L_{x,y} \in M_2(\bbC)$ is a $2\times 2$ matrix given by
\[
 L_{x,y} := \left(\begin{array}{cc}
 (A_0x,x) & (By,x)\\
 (Cx,y) & (A_1y,y)
 \end{array}\right), \quad x\in \dom(A_0), \,  y\in
 \dom(A_1),\, \|x\|=\|y\|=1.
\]

The quadratic numerical range is not convex and may consist of at
most two connected components. It is always contained in the
numerical range, $W^2(L) \subset W(L)$, and the inclusion
$\spec(L)\subset \overline{W^2(L)}$ holds if every connected
component of $\bbC\setminus W^2(L)$ contains at least one point of
$\rho(L)$ (see \cite[Theorem~2.1]{LT1998}, \cite[Proposition~3.2,
Theorems~4.2 and 4.7]{Tretter2009} or \cite[Theorem~2.5.3,
Theorems~2.5.10 and~2.5.15]{Tretter2008})).

\begin{theorem}
\label{qnr2} Assume that $L=A+V$ satisfies Assumption \ref{Hypo2}
with $C=-B^\ast$ and let $a,b\in\bbR$.
\begin{enumerate}
\item [(i)] If\,
$\Re W(A) \le a$\, and  \,$\rho(A)\cap\big\{ z\in\bbC \,\,\bigl|\,\,
\Re z > a \big\} \not = \emptyset$, then\, $\Re \spec(L) \le a$.
\item [(ii)] If\,
$\Re W(A) \ge b$\, and \,$\rho(A)\cap\big\{ z\in\bbC \,\,\bigl|\,\,
\Re z < b \big\} \not = \emptyset$, then \,$\Re \spec(L) \ge b$;
\end{enumerate}
in particular, if $A$ is self-adjoint, we have
\begin{equation*}
\label{Ass} \inf\,\spec(A)\leq\Real\,\spec(L)\leq\sup\,\spec(A).
\end{equation*}
\end{theorem}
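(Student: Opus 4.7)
The key observation is that with $C = -B^\ast$ the off-diagonal perturbation $V$ satisfies $V^\ast = -V$, so the form $(V\cdot,\cdot)$ is purely imaginary. For $z = (x, y)^\top \in \fH_0 \oplus \fH_1$,
\[
(V z, z) = (By, x) - (B^\ast x, y) = (By, x) - \overline{(By, x)} = 2\ri\,\Im (By, x),
\]
so $\Re (Lz, z) = \Re (Az, z)$ on $\dom(L)$. Under hypothesis~(i) this yields $\Re W(L) \le \Re W(A) \le a$; equivalently, each $2 \times 2$ matrix $L_{x, y}$ in the definition of $W^2(L)$ has skew-Hermitian off-diagonal part, so $\Re W^2(L) \le a$ as well.

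I will prove (i); claim (ii) follows by applying (i) to $-L = (-A) + (-V)$, which has the same structural form with $\tilde B = -B$ and $-\tilde B^\ast = B^\ast$, and which reduces hypothesis (ii) for $L$ to hypothesis (i) for $-L$. For the self-adjoint-$A$ corollary I take $a = \sup \spec(A)$ and $b = \inf \spec(A)$: both resolvent-set assumptions are automatic since $\bbC \setminus \bbR \subset \rho(A)$ meets both vertical half-planes, and the one-sided unbounded cases are vacuous.

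Set $H := \{\lambda \in \bbC : \Re \lambda > a\}$; the goal is $H \subset \rho(L)$. From $\Re W(L) \le a$ and Cauchy--Schwarz, for $\lambda \in H$ and $z \in \dom(L)$,
\[
\|(L - \lambda) z\|\, \|z\| \ge -\Re\bigl((L-\lambda)z, z\bigr) \ge (\Re \lambda - a) \|z\|^2,
\]
so $L - \lambda$ is injective with closed range and $\|(L-\lambda)^{-1}\|_{\Ran(L-\lambda)} \le 1/(\Re \lambda - a)$, with the analogous bound for $A$. First I show $H \subset \rho(A)$: the set $H \cap \rho(A)$ is open in $H$, non-empty by hypothesis, and closed in $H$, because if $\lambda_n \in H \cap \rho(A)$ converge to $\lambda \in H$, the uniform bound $\|(A-\lambda_n)^{-1}\| \le 1/(\Re \lambda_n - a)$ ensures via a Neumann perturbation that $\lambda \in \rho(A)$. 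Connectedness of $H$ gives $H \cap \rho(A) = H$. Next, for $\lambda \in H$ with $\Re \lambda > a + \|V\|$, $\|V(A-\lambda)^{-1}\| < 1$, so $\lambda \in \rho(L)$ by a Neumann series; this provides a non-empty open subset of $H \cap \rho(L)$. Repeating the same open-closed-connected argument with $L$ in place of $A$ then yields $H \subset \rho(L)$, i.e.\ $\Re \spec(L) \le a$.

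The main technical point is the closedness in $H$ at each stage: one must turn the range-side resolvent bound coming from the numerical-range estimate into a genuine operator-norm bound along sequences $\lambda_n \to \lambda$ in $H$. This works because once $\lambda_n \in \rho(L)$ the range of $L - \lambda_n$ is all of $\fH$, so $\|(L - \lambda_n)^{-1}\| \le 1/(\Re \lambda_n - a)$ is a genuine norm bound that stays bounded in the limit, allowing the standard Neumann argument to conclude $\lambda \in \rho(L)$.
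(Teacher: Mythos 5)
Your proof is correct, but it takes a genuinely different route from the paper. The paper proves the theorem as an application of the \emph{quadratic} numerical range: it notes (by the same Neumann-series observation you use) that $\{\,z : \Re z>a\,\}\cap\rho(L)\neq\emptyset$, invokes the inclusion $\spec(L)\subset\overline{W^2(L)}$, and then shows $\Re W^2(L)\le a$ by computing $\det(L_{x,y}-\la)=0$ with $C=-B^*$, splitting into real and imaginary parts, and deriving a sign contradiction. You instead exploit the skew-adjointness $V^*=-V$ directly on the \emph{ordinary} numerical range, getting $\Re W(L)=\Re W(A)\le a$, and then run the standard ``a component of $\bbC\setminus\overline{W(L)}$ that meets $\rho(L)$ lies in $\rho(L)$'' argument --- which is exactly \cite[Theorem V.3.2]{K}, quoted by the paper a few lines before the theorem, so you could have cited it rather than re-deriving it via the open--closed--connectedness and resolvent-bound argument (your derivation is nonetheless sound: the passage from the range-side bound to a genuine norm bound once $\la_n\in\rho(L)$, and the two-stage treatment of $A$ and then $L$, are exactly the right technical points). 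Your approach is more elementary in that it avoids $W^2(L)$ altogether; the paper's fits its surrounding exposition, which introduces $W^2(L)$ precisely for this theorem, and localizes the same skew-symmetry to the $2\times2$ matrices $L_{x,y}$. Your reductions --- (ii) from (i) via $-L$, and the self-adjoint corollary via $a=\sup\spec(A)$, $b=\inf\spec(A)$ with $\bbC\setminus\bbR\subset\rho(A)$ --- match the paper's (which simply declares (ii) ``completely analogous'') and are handled correctly.
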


\begin{proof}
We prove (i); the proof of (ii) is completely analogous. By the
assumption on $A$ and since $V$ is bounded, it is obvious that
$\big\{ z\in\bbC \,\,\bigl|\,\, \Re z >a  \big\} \cap \rho(L) \not =
\emptyset$ for $L=A+V$. Hence $\spec(L) \subset W^2(L)$ and so it
suffices to show that $\Re W^2(L) \le a$.

Suppose, to the contrary, that there exists a $\la\in W^2(L)$ with
$\Re \la > a$. By the definition of $W^2(L)$, there are $x\in
\dom(A_0)$, $y\in \dom(A_1)$, $\|x\|=\|y\|=1$, such that $\la\in
\spec (L_{x,y})$, i.e.
\[
 0=\det \big( L_{x,y} - \la \big) = \big( (A_0x,x) - \la \big) \big(
(A_1y,y) - \la \big) + |(By,x)|^2,
\]
where we have used that $C=-B^\ast$. Splitting into real and
imaginary parts, we conclude that
\begin{align*}
 0 & = \Re \big( (A_0x,x) - \la \big) \,\Im \big( (A_1y,y) -
\la \big) + \Im \big( (A_0x,x) - \la \big)
\,\Re \big( (A_1y,y) - \la \big),\\
-|(By,x)|^2 &= \Re \big( (A_0x,x) - \la \big) \,\Re \big( (A_1y,y) -
\la \big) - \Im \big( (A_0x,x) - \la \big) \, \Im \big( (A_1y,y) -
\la \big).
\end{align*}
Solving the first equation for $\Im \big( (A_1y,y) - \la \big)$ and
inserting into the second equation, we find
\begin{equation}
\label{contra}
 -|(By,x)|^2 = \Big( \big( \Re (A_0x,x) - \Re \la \big)^2 +  \big(
\Im (A_0x,x) - \Im \la \big)^2 \Big) \frac{\Re (A_1y,y) - \Re
\la}{\Re (A_0x,x) - \Re \la}.
\end{equation}
By the assumption on $A$, we have $\Re \la > a \ge \Re W(A) = \Re
\big( W(A_0) \cup W(A_1) \big)$ and hence both the first and the
second factor on the right hand side of \eqref{contra} are positive,
a contradiction.
\end{proof}

\section{A priori bounds on variation of spectral subspaces}
\label{SecBapr}

In this section we use the (semi-) a posteriori norm bounds for
the operator angles from Theorems \ref{ThTheta} and \ref{ThTgen}
together with the spectral estimates from Section \ref{SecSpec} to
derive a priori estimates for the variation of the spectral
subspaces of the self-adjoint operator $A$ under a $J$-self-adjoint
off-diagonal perturbation $V$.

To ensure that solutions of the corresponding Riccati equations
exist, we use some results of \cite{AlMoSh} and \cite{Veselic2}.
They provide sufficient conditions on the perturbation $V$ and
spectral sets $\sigma_0$ and $\sigma_1$ guaranteeing that the
perturbed operator $L=A+V$ is similar to a self-adjoint operator on
a Hilbert space and that the spectral subspaces of $L$ associated
with the perturbed spectral sets $\sigma'_0$  and $\sigma'_1$,
both being real in this case, are maximal uniformly definite in the
Krein space $\fK$.

In order to formulate the conditions from \cite{Veselic2}, we need
to specify all those finite gaps of the spectrum of $A$
that separate the subsets $\sigma_0$ and $\sigma_1$. We denote
these gaps by $\Delta_n$, where $n\in\bbZ$ runs from $N_-$ through
$N_+$ with $-\infty\leq N_-\leq 0$, $0\leq N_+\leq +\infty$, and let
\begin{equation}
\label{Deltaj}
\Delta_n=(a_n,b_n), \quad -\infty<\ldots\leq a_{n-1}<
b_{n-1}\leq a_n<b_n\leq a_{n+1}<b_{n+1}\leq\ldots<\infty\,,
\end{equation}
assigning the value of $n=0$, say, to the gap that is closest to the
origin $z=0$. For every such gap, we have
\,$\Delta_n\cap\sigma_0=\emptyset$,\,
$\Delta_n\cap\sigma_1=\emptyset$,\, and \,$a_n\in\sigma_i$,\,
$b_n\in\sigma_{1-i}$\, where either $i=0$ or $i=1$. If the total
number $N$ of the gaps between $\sigma_0$ and $\sigma_1$ is finite,
then both $N_-$, $N_+$ are finite and $N=|N_-|+N_++1$. Otherwise, at
least one of $N_-$ and $N_+$ is infinite.

The next theorem is an immediate consequence of
\cite[Theorem~5.8]{AlMoSh} and \cite[Theorem~3 and
Corollary~4]{Veselic2}, combined with Theorem \ref{c1}.

\begin{theorem}
\label{Tsuff} Assume that $L=A+V$ satisfies Assumption \ref{Hypo1}
and let the spectra $\sigma_0=\spec(A_0)$ and $\sigma_1=\spec(A_1)$
be disjoint, i.e.\
$$d=\dist(\sigma_0,\sigma_1)>0.$$
Assume, in addition, that one of the following holds:

\begin{enumerate}
\item[(i)]
\ $\|V\|<\dfrac{d}{\pi}$\,.
\smallskip

\item[(ii)]  %
\ $\|V\|<\dfrac{d}{2}$ \, and \,
$\displaystyle\sum_{n=N_-}^{N_+} \frac{1}{b_n-a_n}<\infty$.
\end{enumerate}
Then
\[
 \spec(L) \subset \sigma_0' \,\dot\cup\, \sigma_1', \quad \sigma_i'
\subset O_{r_V}(\sigma_i) \cap \bbR, \ \ i=0,1,
\]
with $r_V$ given by \eqref{rtans}. The operator $L$ is similar to a
self-adjoint operator on $\fH$. The spectral subspaces $\fH'_0$ and
$\fH'_1$ of $\,L$ associated with the sets $\sigma'_0$ and
$\sigma'_1$ are mutually orthogonal in the Krein space $\{\fH,J\}$
and maximal uniformly positive resp.\ negative therein.
\end{theorem}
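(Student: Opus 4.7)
The plan is to recognize that the three assertions of the theorem—(a) similarity of $L$ to a self-adjoint operator together with $\spec(L)\subset\bbR$, (b) the decomposition $\spec(L)=\sigma'_0\,\dot\cup\,\sigma'_1$ into spectra of reducing subspaces $\fH'_0,\fH'_1$ that are maximal uniformly positive resp.\ negative in $\fK=\{\fH,J\}$ and $J$-orthogonal, and (c) the enclosures $\sigma'_i\subset O_{r_V}(\sigma_i)\cap\bbR$ with $r_V$ as in \eqref{rtans}—all reduce to producing a single intermediate object: a uniformly contractive strong solution $K\in\cB(\fH_0,\fH_1)$, $\|K\|<1$, of the Riccati equation \eqref{RicABB}. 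Once such $K$ is in hand, Theorem \ref{Lss} immediately supplies (a) and (b), because $\fH'_0=\cG(K)$ and $\fH'_1=\cG(K^*)$ are automatically maximal uniformly definite by Lemma \ref{Lmaxu}, $J$-orthogonal by Lemma \ref{Lspn}, reducing for $L$, and the restrictions $L|_{\fH'_i}$ are $\fK$-unitarily equivalent to the self-adjoint operators $\Lambda_0,\Lambda_1$ from \eqref{L0p}--\eqref{L01}, so that $\sigma'_i=\spec(\Lambda_i)\subset\bbR$.

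The first step, accordingly, is to exhibit such a uniformly contractive solution in each of the two regimes. Under hypothesis (i), the bound $\|V\|<d/\pi$ is precisely the threshold treated by the generic (no-subordination) part of \cite[Theorem~5.8]{AlMoSh}, which guarantees the existence of a strong solution $K\in\cB(\fH_0,\fH_1)$ of \eqref{RicABB} with $\|K\|<1$; I would simply invoke that result. Under hypothesis (ii), the weaker bound $\|V\|<d/2$ is compensated by the summability condition $\sum_{n=N_-}^{N_+}(b_n-a_n)^{-1}<\infty$ on the lengths of the separating gaps \eqref{Deltaj}, and in that setting the existence of such a $K$ is furnished by \cite[Theorem~3 and Corollary~4]{Veselic2}. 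In either case, feeding this $K$ into Theorem \ref{Lss} finishes (a) and (b).

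For (c) I would appeal to Theorem \ref{c1}. Because $V$ is $J$-self-adjoint and off-diagonal, \eqref{Voff} gives $C=-B^*$, whence $\|B\|=\|C\|=\|V\|$; both hypotheses (i) and (ii) yield $\|V\|<d/2$ and therefore $\sqrt{\|B\|\|C\|}=\|V\|<d/2$, so the assumptions of Theorem \ref{c1} are satisfied. It then produces $\spec(L)\subset O_{r_V}(\sigma_0)\cup O_{r_V}(\sigma_1)$ with $r_V$ as in \eqref{r-symm}; the identity displayed in \eqref{rtans} identifies this $r_V$ with the formula appearing in the present theorem, and combining this enclosure with the disjoint decomposition already obtained yields $\sigma'_i\subset O_{r_V}(\sigma_i)\cap\bbR$, $i=0,1$.

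The only obstacle is essentially bibliographic rather than analytic: one must locate in \cite{AlMoSh} and \cite{Veselic2} precisely the statement that produces a uniformly contractive \emph{Riccati} solution (and not merely an invariant subspace, which a priori need not be the graph of a contraction over $\fH_0$), since the uniform contractivity of $K$ is the exact interface required by Theorem \ref{Lss}. Once this is verified, no further estimate is needed, and the remainder of the proof is a mechanical combination of Theorem \ref{Lss} with Theorem \ref{c1}.
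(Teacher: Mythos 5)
Your proposal is correct and follows essentially the same route as the paper: case (i) is delegated to \cite[Theorem~5.8\,(i)]{AlMoSh}, case (ii) to \cite[Theorem~3 and Corollary~4]{Veselic2}, the spectral enclosure comes from Theorem~\ref{c1} with $C=-B^*$, and everything is channelled through a uniformly contractive strong solution $K$ of the Riccati equation \eqref{RicABB} fed into Theorem~\ref{Lss}. The one obstacle you flag is resolved in the paper exactly as you anticipate: in case (ii) Veseli\'c's results yield the operator inequality \eqref{JELs}, $J\bigl(\sE_L(\sigma'_0)-\sE_L(\sigma'_1)\bigr)\geq\gamma I$, which gives maximal uniform definiteness of the spectral subspaces, and Lemma~\ref{Lmaxu} together with Lemma~\ref{Lgraph} (cf.\ Corollary~\ref{LCaux}) then converts this into the required uniformly contractive Riccati solution.
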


\begin{remark}
In case (i) Theorem \ref{Tsuff} follows from \cite[Theorem 5.8\,(i)
and Remark~5.11]{AlMoSh}, together with Theorem \ref{c1}. In case
(ii)
the spectral projections $\sE_L(\sigma'_i)$ of $L$
associated with its isolated spectral components $\sigma'_i$,
$i=0,1,$ are well-defined (see \cite[Co\-rol\-la\-ry~4]{Veselic2}),
the operator $J\bigl(\sE_L(\sigma'_0)-\sE_L(\sigma'_1)\bigr)$ is
self-adjoint, and there exists a $\gamma\in(0,1]$ such that
\begin{equation}
\label{JELs}
J\bigl(\sE_L(\sigma'_0)-\sE_L(\sigma'_1)\bigr)\geq\gamma I.
\end{equation}
The latter was established (under much more general assumptions on
$V$ than \eqref{Vd2}) in the proof of \cite[The\-o\-rems~1
and~3]{Veselic2}. Using inequality \eqref{JELs}, one easily verifies
that the spectral subspaces $\fH'_0=\Ran \sE_L(\sigma'_0)$  and
$\fH'_1=\Ran \sE_L(\sigma'_1)$ are maximal uniformly positive and
maximal uniformly negative, respectively. In fact, it suffices to
show the uniform definiteness of one spectral subspace (see
Corollary \ref{LCaux}).
\end{remark}

\begin{remark}
The lengths of the gaps $(a_n,b_n)$ of $\spec(A)$ separating the
sets $\sigma_0$ and $\sigma_1$ have to be uniformly bounded from
below. Apart from this, condition (i) imposes no further restriction
on the behaviour of the lengths, whereas condition (ii) requires that
$b_n-a_n$ tends to $\infty$ faster than $|n|$ as $|n|\to\infty$.
\end{remark}

The following a priori bound on the operator angles
$\Theta(\fH_i,\fH'_i)$ between the unperturbed and the perturbed
spectral subspaces $\fH_i$ of $A$ and $\fH'_i$ of $L=A+V$ improves
the corresponding bound derived in \cite[Theorem~5.8 (i)]{AlMoSh}
(see Remark~\ref{Rgimp} below).

\begin{theorem}
\label{Tbapr-gen} Suppose that $L=A+V$ satisfies the assumptions of
Theorem \ref{Tsuff}. Let $\fH_i$, $\fH_i'$ be the spectral subspaces
of $A$ corresponding to $\sigma_i$ and of $\,L$ corresponding to
$\sigma_i'$, respectively, $i=0,1$. Then the operator angles
$\Theta_j=\Theta(\fH_i,\fH'_i)$, $i=0,1,$ satisfy the estimate
\begin{equation}
\label{Tba-gen} \tan\Theta_i \leq
\dfrac{\pi}{2}\,\tan\left(\frac{1}{2}\,\arcsin\frac{2\|V\|}{d}\right),
\quad i=0,1;
\end{equation}
if, in addition,
\begin{equation}
 \label{conv2}
\conv(\sigma_0)\cap\sigma_1=\emptyset \quad \text{or} \quad
\sigma_0\cap\conv(\sigma_1)=\emptyset,
\end{equation}
then
\begin{equation}
\label{Tba-gen1} \tan\Theta_i \leq
\tan\left(\frac{1}{2}\,\arcsin\frac{2\|V\|}{d}\right), \quad i=0,1.
\end{equation}
\end{theorem}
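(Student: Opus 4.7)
The strategy is to combine the semi-a posteriori bounds of Theorem \ref{ThTheta} with the spectral localization provided by Theorem \ref{Tsuff}. First I would invoke Theorem \ref{Tsuff} to guarantee that $L$ admits spectral subspaces $\fH'_0$ (maximal uniformly positive) and $\fH'_1$ (maximal uniformly negative) associated with the disjoint real spectral sets $\sigma'_0$ and $\sigma'_1$, and that
\[
 \sigma'_i \subset O_{r_V}(\sigma_i)\cap\bbR,\quad i=0,1,
\]
with $r_V=\|V\|\tan\bigl(\tfrac{1}{2}\arcsin(2\|V\|/d)\bigr)$ as given by \eqref{r-symm}.

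Next I would carry out the trigonometric reduction that links $r_V$ with the quantities appearing in the target bounds. Setting $\theta=\tfrac12\arcsin(2\|V\|/d)$, so that $\|V\|=d\sin\theta\cos\theta$, one computes $r_V=\|V\|\tan\theta=d\sin^2\theta$, hence
\[
  d-r_V=d\cos^2\theta\quad\text{and}\quad \frac{\|V\|}{d-r_V}=\tan\theta
  =\tan\Bigl(\tfrac12\arcsin\tfrac{2\|V\|}{d}\Bigr).
\]
Since $\sigma'_{1-i}\subset O_{r_V}(\sigma_{1-i})$ and $\dist(\sigma_i,\sigma_{1-i})\ge d$, this yields the lower bound
\[
 \delta_i=\dist(\sigma_i,\sigma'_{1-i})\ge d-r_V>0,\quad i=0,1,
\]
which in particular places us in the situation of Theorem \ref{ThTheta}(i). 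Applying estimate \eqref{TanT0} with this bound gives directly
\[
 \tan\Theta_i=\|\tan\Theta_i\|\le\frac{\pi}{2}\frac{\|V\|}{\delta_i}
 \le\frac{\pi}{2}\frac{\|V\|}{d-r_V}=\frac{\pi}{2}\tan\Bigl(\tfrac12\arcsin\tfrac{2\|V\|}{d}\Bigr),
\]
which is \eqref{Tba-gen}.

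The main obstacle is the second assertion: one must show that the additional hypothesis \eqref{conv2} on the unperturbed spectra transfers to the perturbed setting in the form \eqref{disp1}, so that the sharper Theorem \ref{ThTheta}(ii) applies. Suppose without loss of generality that $\conv(\sigma_0)\cap\sigma_1=\emptyset$. Writing $\conv(\sigma_0)=[a_0,b_0]$ with $a_0,b_0\in\sigma_0$, any $t\in\sigma_1$ lies outside $[a_0,b_0]$, and since $|t-a_0|\ge d$ and $|t-b_0|\ge d$ one obtains $\dist(\sigma_1,\conv(\sigma_0))\ge d$. Because $r_V<\|V\|<d/2<d$ by \eqref{r-symm} and the assumption on $\|V\|$, the inclusion $\sigma'_1\subset O_{r_V}(\sigma_1)$ gives $\dist(\sigma'_1,\conv(\sigma_0))\ge d-r_V>0$; in particular $\conv(\sigma_0)\cap\sigma'_1=\emptyset$, which is the $i=0$ case of \eqref{disp1}. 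The symmetric case $\sigma_0\cap\conv(\sigma_1)=\emptyset$ is handled analogously using $\conv(\sigma'_1)\subset O_{r_V}(\conv(\sigma_1))$, which holds because on $\bbR$ the convex hull of an $r_V$-neighbourhood of a set coincides with the $r_V$-neighbourhood of its convex hull. With \eqref{disp1} established, Theorem \ref{ThTheta}(ii) yields
\[
 \tan\Theta_i\le\frac{\|V\|}{\delta_i}\le\frac{\|V\|}{d-r_V}
 =\tan\Bigl(\tfrac12\arcsin\tfrac{2\|V\|}{d}\Bigr),
\]
which is the desired bound \eqref{Tba-gen1}.
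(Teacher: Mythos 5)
Your proposal is correct and follows essentially the same route as the paper: invoke Theorem \ref{Tsuff} to get the spectral localization $\sigma_i'\subset O_{r_V}(\sigma_i)\cap\bbR$ and the maximal uniform definiteness of $\fH_i'$, deduce $\delta_i\ge d-r_V$ and the identity $\|V\|/(d-r_V)=\tan\bigl(\tfrac12\arcsin\tfrac{2\|V\|}{d}\bigr)$, and then apply Theorem \ref{ThTheta}\,(i) resp.\ (ii). The only (welcome) difference is that you spell out explicitly why \eqref{conv2} propagates to the gap condition \eqref{disp1} for the pair $(\sigma_i,\sigma'_{1-i})$, a verification the paper leaves implicit.
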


\begin{proof}
By Theorem \ref{Tsuff}, the perturbed operator $L$ is similar to a
self-adjoint operator and its disjoint spectral components
$\sigma'_i\subset\bbR$, $i=0,1$, satisfy the inclusions \eqref{Ors}
with $r_V$ given by \eqref{rtans}. The latter implies that, for
$i=0,1$,
\[
\delta_i=\dist(\sigma_i,\sigma'_{1-i})\geq
d-r_V=\frac{d}{2}+\sqrt{\frac{d^2}{4}-\|V\|^2}
\]
and hence, by \eqref{eps},
\begin{equation}
\label{deli} \frac{\|V\|}{\delta_i} \le \frac{\|V\|}
{\frac{d}{2}+\sqrt{\frac{d^2}{4}-\|V\|^2}} = \tan\left(
\frac 12 \arcsin \frac{2\|V\|}{d} \right).
\end{equation}
In addition, Theorem \ref{Tsuff} also shows that the spectral
subspaces $\fH'_i$ associated with $\sigma_i'$ are maximal uniformly
definite. Thus all assumptions of The\-o\-rem~\ref{ThTheta}~(i)
(\ref{ThTheta}~(ii), respectively) are satisfied and the
claimed bound \eqref{Tba-gen} (\eqref{Tba-gen1}, respectively)
follows from \eqref{TanT0} (\eqref{TanT0s}, respectively) together
with \eqref{deli}.
\end{proof}

\begin{remark}
\label{opang} The operators $\tan\Theta_i$, $i=0,1$, in Theorem
\ref{Tbapr-gen} coincide with the moduli $|K|$ or $|K^\ast|$ of
uniformly contractive solutions to the Riccati equations
\eqref{RicABB} and \eqref{RicABB1}, respectively (see
Remark~\ref{Rangular} and Lemma~\ref{Lspn}); hence we always have
$\tan\Theta_i<1$.

If $L=A+V$ satisfies condition (i) in Theorem \ref{Tsuff}, i.e.\
$\|V\|<d/\pi$, then the bound \eqref{Tba-gen} is always less than
$1$; if $L$ satisfies condition (ii) in this theorem and hence
$\|V\|<d/2$, then for the bound \eqref{Tba-gen} to be less than $1$
we need to have $d$ and $V$ such that \,
$$ \|V\| < \frac d2 \sin \left( 2 \arctan \frac 2\pi \right) =
\frac d2 \frac{4\pi}{4+\pi^2} \approx \frac d2 \,
\times \,0.9060367012.$$
\end{remark}

\begin{remark}
\label{Rgimp} The bound \eqref{Tba-gen} is stronger than the
previously known bound
\begin{equation}
\label{TbAMSh} \tan\Theta_i\leq
\tanh\left(\frac{1}{2}\arctanh\dfrac{\pi\|V\|}{d}\right) =
\tan\left(\frac{1}{2}\arcsin\dfrac{\pi\|V\|}{d}\right),\quad i=0,1,
\end{equation}
from \cite[Theorem 5.8\,(i)]{AlMoSh} and extends it to perturbations
$V$ that do not satisfy the condition $\|V\|<d/\pi$ required
therein. The former is a consequence of the trigonometric inequality
$\frac{\pi}{2}\,\tan\left(\frac{1}{2}\,\arcsin(2t)\right)<
\tanh\left(\frac{1}{2}\arctanh(\pi\,t)\right)$,\,  $t\in(0,1/\pi)$.
Note that, if $\|V\|<{d}/{\pi}$, then the bound \eqref{TbAMSh} may
be written equivalently as
\begin{equation}
\label{TbAMSh1} \sin2\Theta_i\leq \frac{\pi\|V\|}{d}, \quad i=0,1.
\end{equation}

{For the particular case \eqref{conv2}, the bound \eqref{Tba-gen1}
coincides with the previously known bound \eqref{Sin2Tin} from
\cite[Theorem 5.8\,(ii)]{AlMoSh} since then the corresponding
spectral bounds $r_V$ and $r$ (defined in \eqref{r-symm} and
\eqref{rtans}, respectively) coincide (see Remark~\ref{compare}).}
\end{remark}

\begin{remark}
\label{Rlast} By \eqref{Ors} and \eqref{r-symm}, we also have the
estimate
\begin{equation}
\label{delapr} \widehat{\delta}=\dist(\sigma'_0,\sigma'_1)\geq
d-2r_V=\sqrt{d^2-4\|V\|^2}.
\end{equation}
Combining estimate \eqref{T2TN} from Theorem \ref{ThTgen}\,(i) with
inequality \eqref{delapr}, we arrive at the bound
$$
\tan\Theta_i\leq \frac \pi2 \frac{\|V\|}{\sqrt{d^2-4\|V\|^2}}, \quad
i=0,1,
$$
which is worse than \eqref{Tba-gen}, but still better than the
estimate \eqref{TbAMSh} from \cite[Theorem
5.8\,(i)]{AlMoSh}.
Combining the estimate \eqref{T2TNs} from Theorem \ref{ThTgen}\,(ii)
with the inequality \eqref{delapr} yields the~bound
$$\tan\Theta_i\leq\frac{\|V\|}{\sqrt{d^2-3\|V\|^2}}, \quad  i=0,1,$$
which is worse than the estimate
\eqref{Sin2Tin}.
\end{remark}

\begin{remark}
\label{Rlast1} If $\|V\|<d/2$ and the spectral sets $\sigma_0$ and
$\sigma_1$ are bounded and subordinated, i.e.\
$\conv(\sigma_0)\cap\conv(\sigma_1)=\emptyset$,
then combining inequality \eqref{delapr} with the estimate
\eqref{T2T} from Theorem \ref{ThTgen}\,(iii) results exactly in the
a priori sharp norm bound \eqref{Sin2T1} from Theorem \ref{IEOT}.
\end{remark}

\section{Quantum harmonic oscillator under a $\cP\cT$-symmetric perturbation}
\label{SecExHO}

In this section we apply the results of the previous sections to the
$N$-dimensional isotropic quantum harmonic oscillator under a
$\cP\cT$-symmetric perturbation.

Let $\fH=L_2(\bbR^N)$ for some $N\in\bbN$. Assuming
that the units are chosen such that $\hbar=m=\omega=1$, the
Hamiltonian of the isotropic quantum harmonic oscillator is given by
\begin{equation}
\label{Aho}
(Af)(x)=-\frac{1}{2}\Delta f(x)+\frac{1}{2}|x|^2f(x), \quad
\dom(A)=\biggl\{f\in W^2_2(\bbR^N)\,\,\biggl|\,\, \displaystyle\int_{\bbR^N}\!
dx\; |x|^4|f(x)|^2<\infty\biggr\},
\end{equation}
where $\Delta$ is the Laplacian and $W_2^2(\bbR^N)$ stands
for the Sobolev space of $L_2(\bbR^N)$-functions that have their
second partial derivatives in $L_2(\bbR^N)$.

It is well-known that the Hamiltonian $A$ is a self-adjoint operator
in $L_2(\bbR^N)$ and its spectrum consists of eigenvalues of
the form
\begin{equation}
\label{lamN}
 \la_n = n+ N/2, \quad n=0,1,2, \dots,
\end{equation}
whose multiplicities $\mu_n$ are given by the binomial coefficients
(see, e.g., \cite{LJVdJ2008} and the references therein)
\begin{equation}
\label{mun}
\mu_n=\left(\begin{array}{c}N+n-1\\n\end{array}\right), \quad n=0,1,2, \dots.
\end{equation}
For $n$ even, the corresponding eigenfunctions $f(x)$ are
symmetric with respect to space reflection $x\,\mapsto-x$
(i.e.\ $f(-x)=f(x)$). For $n$ odd, the
eigenfunctions are anti-symmetric (i.e.\
$f(-x)=-f(x)$). Hence if we partition the spectrum
$\spec(A) = \sigma_0 \,\dot\cup\, \sigma_1$ with
$$
\sigma_0=
\{n+N/2\,\,\bigl|\,\, n=0,2,4,\dots\}, \quad
\sigma_1=
\{n+N/2\,\,\bigl|\,\,n=1,3,5\ldots\},
$$
then the subspaces
\begin{equation}
\label{Hho} \fH_0=L_{2,\textrm{even}}(\bbR^N), \quad
\fH_1=L_{2,\textrm{odd}}(\bbR^N)
\end{equation}
of symmetric and anti-symmetric functions are the complementary
spectral subspaces of $A$ corresponding to the spectral components
$\sigma_0$ and $\sigma_1$, respectively. Obviously,
$$
d=\dist(\sigma_0,\sigma_1)=1.
$$

Let $\cP$ be the parity operator on $L_2(\bbR^N)$, $(\cP
f)(-x)=f(-x)$, and $\cT$ the (antilinear) operator of complex
conjugation, $(\cT f)(x)=\overline{f(x)}$, $f\in L_2(\bbR^N)$. An
operator $V$ on $L_2(\bbR^N)$ is called $\cP\cT$-symmetric if it
commutes with the product $\cP\cT$, i.e.
\begin{equation}
\label{PTsym} \cP\cT V =V\cP\cT.
\end{equation}
Clearly, the parity operator $\cP$ is a self-adjoint involution on
$L_2(\bbR^{N})$ whose spectral subspaces
$$
\Ran\sE_{\cP}(\{+1\})=L_{2,\textrm{even}}(\bbR^N), \quad
\Ran\sE_{\cP}(\{-1\})=L_{2,\textrm{odd}}(\bbR^N)
$$
coincide with the respective spectral subspaces \eqref{Hho} of the
Hamiltonian \eqref{Aho}.

{}From now on, let $V$ be the multiplication operator by a
function of the form
$$V(x)={\rm i} \,b(x), \quad x \in \bbR^N,$$
where $b\in L_\infty(\bbR^N)$ is real-valued and anti-symmetric, i.e.\
$b(x)\in\bbR$ and $b(-x)=-b(x)$ for a.e. $x\in\bbR^N$. Such an
operator $V$ is not only $\cP\cT$-symmetric on $L_2(\bbR^N)$  (see,
e.g., \cite[Section 3]{Caliceti3}) but also $J$-self-adjoint with
respect to the involution $J=\cP$. Moreover, it is anticommuting
with $\cP$ which means that such a $V$ is off-diagonal with respect
to the decomposition $\fH=\fH_0\oplus\fH_1$.

By \cite[Theorem 3.2]{Caliceti3} the spectrum of the perturbed
Hamiltonian $L=A+V$ given by
$$
(L f)(x) = -\frac{1}{2}\Delta f(x) +\frac{1}{2}|x|^2 f(x) + {\rm i} b(x) f(x),
\quad \dom(L)=\dom(A),
$$
remains real (and discrete) whenever \mbox{$\|V\|=\|b\|_{\infty}<
1/2$}. Furthermore, \cite[Proposition 3.5]{Caliceti3} implies that
for $\|V\|<1/2$ the closed $O_{\|V\|}(\lambda_n)$-neighbourhood of
the eigenvalue \eqref{lamN} of $A$ contains exactly $\mu_n$ (real)
eigenvalues $\lambda'_{n,k}$, $k=1,2,\ldots,\mu_n$, of $L$, counted
with multiplicities, where $\mu_n$ is given by \eqref{mun}.
Combining \cite[The\-o\-rem~3.2 and
Pro\-po\-si\-ti\-on~3.5]{Caliceti3} with The\-o\-rem~\ref{c1}
ensures that, in fact, the eigenvalues $\lambda'_{n,k}$ satisfy the
estimates
\[
  \big|\lambda'_{n,k} - (n+N/2)\big| < r_V,   \quad n=0,1,2,\dots,\
\ k=1,2,\ldots,\mu_n,
\]
where
\[
r_V={\|b\|_\infty \tan\left(\frac{1}{2}\arcsin(2\|b\|_\infty)\right)<\|b\|_\infty}.
\]

Further, assume that the stronger inequality
$\|V\|=\|b\|_{\infty}<1/\pi$ holds. In this case it follows from
\cite[Theorem 5.8\,(i)]{AlMoSh} that $L$ is similar to a
self-adjoint operator. At the same time, The\-o\-rem~\ref{Tbapr-gen}
implies the following bound on the variation of the spectral
subspaces \eqref{Hho}:
\begin{equation}
\label{Bosc} \tan\Theta_j\leq\frac{\pi}{2}\tan\left(\frac{1}{2}
\arcsin(2{\|b\|_\infty})\right)<1,\quad j=0,1,
\end{equation}
where $\Theta_j=\Theta(\fH_j,\fH'_j)$ denotes the operator angle
between the subspace $\fH_j$ and the spectral subspace $\fH'_j$ of
$L$ associated with the spectral subset $\sigma'_j=\spec(L)\cap
O_{r_V}(\sigma_j)$, $j=0,1$. The estimate \eqref{Bosc} improves the
corresponding bound for the one-dimensional case obtained in
\cite[Section 6]{AlMoSh}  (cf. Re\-mark \ref{Rgimp}).

\smallskip

\vspace*{2mm} \noindent {\bf Acknowledgements.} A.\,K.~Motovilov
gratefully acknowledges the kind hospitality and support of the
Mathe\-matisches Institut, Universit\"at Bern; his research was also
supported by Deutsche For\-sch\-ungs\-gemeinschaft (DFG), Grant no.\
436 RUS 113/817, by Russian Foundation for Basic Research, Grants
no. 06-01-04003 and 09-01-90408, and by the Heisenberg-Landau
Program. C.~Tretter kindly acknowledges the support of this work by
Deutsche Forschungsgemeinschaft (DFG), Grant no.\ TR~368/6-2, and by
Schweizerischer Nationalfonds (SNF), Grant no.\ 200021-119826/1. The
authors are grateful to H.\ Langer for drawing their attention to
the papers \cite{Veselic1,Veselic2} by K.\ Ve\-se\-li\'c;
A.\,K.~Motovilov also thanks K.\ Ve\-se\-li\'c for enlightening
discussions.


\end{document}